\newtheorem{theorem}{Theorem}[section]
\newtheorem{lemma}[theorem]{Lemma}
\newtheorem{prop}[theorem]{Proposition}
\newtheorem{corollary}[theorem]{Corollary}
\newtheorem{conj}[theorem]{Conjecture}
\theoremstyle{definition}
\newtheorem{setting}[theorem]{Setting}
\newtheorem{definition}[theorem]{Definition}
\newtheorem{remark}[theorem]{Remark}
\newtheorem{example}[theorem]{Example}
\newcommand{\bZ}{{\mathbb Z}}
\newcommand{\bC}{{\mathbb C}}
\newcommand{\ra}{\rightarrow}
\title{Bergman spaces on algebraic curves}
\thanks{The first two authors are partially supported by  ``\'Elvonal (Frontier)'' Grant KKP 144148,
RSz was supported by ERC Advanced Grant KnotSurf4d}
\author{László Koltai}
\address{HUN-REN A. R\'enyi Institute of Math.,
Re\'altanoda utca 13-15, H-1053, Budapest, Hungary and\newline
 \hspace*{3mm} ELTE - Eötvös L. Univ., Dept. of Geo.,
 P\'azm\'any P\'eter s\'et\'any 1/C, 1117, Budapest, Hungary}
\email{koltai.laszlo@renyi.hu}
\author{Alexander A. Kubasch}
\address{HUN-REN A. R\'enyi Institute of Math.,
Re\'altanoda utca 13-15, H-1053, Budapest, Hungary and\newline
 \hspace*{3mm} ELTE - Eötvös L. Univ., Dept. of Geo.,
 P\'azm\'any P\'eter s\'et\'any 1/C, 1117, Budapest, Hungary}
\email{kubasch.alexander@renyi.hu }
\author{Róbert Szőke}
\address{HUN-REN A. R\'enyi Institute of Math.,
Re\'altanoda utca 13-15, H-1053, Budapest, Hungary and\newline
 \hspace*{3mm} ELTE - Eötvös L. Univ., Dept. of Anal.,
 P\'azm\'any P\'eter s\'et\'any 1/C, 1117, Budapest, Hungary}
\email{rszoke@ttk.elte.hu}
\keywords{}
\subjclass[2010]{14H20, 
30F99, 
30H20, 
32C15, 
32S10. 
}
\begin{document}
\begin{abstract}
    A theorem of Wiegerinck asserts that the Bergman space of an open subset of the complex numbers is either infinite-dimensional or trivial. Recently, this has been generalized to holomorphic vector bundles over the projective line by the third author and later to vector bundles over any compact Riemann surface by Gallagher, Gupta and Vivas.
    
    In the present paper we extend the above results to the case of certain singular metrics associated to divisors on a Riemann surface. As corollaries we obtain versions of Wiegerinck's theorem for both projective and affine algebraic curves.
\end{abstract}
\maketitle

\section{Introduction}

\noindent Bergman spaces feature prominently in complex geometry, functional analysis, operator theory, and even arise in mathematical physics as quantum Hilbert spaces. They have been studied extensively over the past half century, but (as far as the authors can tell) have never been considered over \emph{singular} spaces. This paper aims to be a first step in this direction by studying Bergman spaces over both projective and affine algebraic curves.\vspace{3mm} 

\noindent Let $U\subseteq \bC^n$ be an open subset. The   Bergman space $A^2(U)$ is the Hilbert space of holomorphic functions on $U$ which are square-integrable with respect to the Euclidean volume form $dV$. Biholomorphic open sets have isomorphic Bergman spaces, yielding an important geometric invariant. It is easy to see that $A^2(\bC^n)$ is trivial.  On the other hand, if $U$ is bounded, then $A^2(U)$ contains all complex polynomials, making it infinite-dimensional. Moreover, Wiegerinck showed in \cite{Wiegerinck} that for any $k \in \mathbb{N}$, there exists a (non Stein) domain $U_k \subseteq \bC^2$ having $k$-dimensional Bergman space.

In dimension $n=1$ however, the situation is strikingly different. In the same paper Wiegerinck proved the following dichotomy: if $U \subseteq \bC$ is any open subset, then $A^2(U)$ is either infinite-dimensional or trivial. This phenomenon is illuminated by earlier work of Carleson \cite{Ca83}: it turns out that the Bergman space of $U \subseteq \bC$ is trivial if and only if its complement is ``small", because in this case the restriction map $A^2(\bC) \to A^2(U)$ is onto and $A^2(\bC)$ is trivial. The precise meaning of ``smallness" is potential-theoretic in nature: a set is ``small" if and only if it is locally polar (see Definition \ref{def:locpol}).

In $\bC$ every open set is Stein, unlike in higher dimensions. It is then very natural to guess that the same dichotomy holds for all Stein domains $U \subseteq \bC^n$ and any $n$. This is the celebrated Wiegerinck conjecture. Despite significant work and some progress, it remains open. In this paper we pursue a different kind of generalization to Wiegerinck’s theorem.\vspace{3mm}

\noindent The notion of Bergman spaces can be extended easily to singular spaces. Suppose $X$ is a complex curve, i.e. a reduced, one-dimensional complex space. We say that $\sigma$ is a volume form on $X$ if it is a positive and continuous volume form on the smooth part $X^*$ and in a small open neighborhood of each singular point it is the restriction of the K\"ahler form of $\bC^n$ using some local embedding.  
 The Bergman space $A^2(X,\sigma)$ consists of those holomorphic functions on $X$ that are also in $L^2(X^*,\sigma).$

\begin{theorem}\label{intr:simaffin}
    \normalfont{\textbf{[Corollary \ref{cor:simaffin}]}} \textit{Let $M \subset \bC^n$ be a smooth affine algebraic curve with volume form $\sigma$ given by the restriction of the Kähler form on $\bC^n$. Given any open subset $U \subseteq M$, the Bergman space $A^2(U,\sigma)$ is either infinite-dimensional or trivial.}
\end{theorem}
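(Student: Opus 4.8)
The plan is to reduce the statement to the compact, divisorial setting covered by our main theorem. First I would compactify and normalize: let $\bar M \subseteq \bP^n$ be the projective closure of $M$, and let $\nu \colon X \to \bar M$ be the normalization, so that $X$ is a compact Riemann surface. Since $M$ is affine, $\bar M \setminus M$ is a finite set of points; and since $M$ is smooth, hence normal, $\nu$ restricts to a biholomorphism $\nu^{-1}(M) \xrightarrow{\ \sim\ } M$. We may therefore identify $M$ with $X \setminus \{p_1, \dots, p_k\}$, where $p_1, \dots, p_k \in X$ ($k \ge 1$, since a smooth affine curve is noncompact) are the points lying over $\bar M \setminus M$; every open $U \subseteq M$ then becomes an open subset of $X$ disjoint from the $p_i$, and the form $\sigma$ is unaffected by this identification.

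Next I would pin down the behavior of $\sigma$ near each $p_i$. In a holomorphic coordinate $t$ centered at $p_i$, the affine coordinates $z_1, \dots, z_n$ of $\bC^n$ pull back to meromorphic functions of $t$, at least one of which has a pole at $t = 0$ (otherwise the corresponding point would lie in $M$). Writing $z_j(t) = c_j t^{-a_j} + \cdots$ with $c_j \ne 0$, we have $\sigma = \tfrac{i}{2}\sum_j |z_j'(t)|^2\, dt \wedge d\bar t$; the summands are nonnegative, so there is no cancellation in the leading term and $\sigma \asymp |t|^{-2 m_i}\, \tfrac{i}{2}\, dt \wedge d\bar t$ on a punctured neighborhood of $p_i$, with $m_i = 1 + \max\{a_j : a_j \ge 1\} \ge 2$. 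Thus $\sigma$ is comparable to a singular metric on $X$ associated to the effective divisor $D = \sum_i m_i p_i$.

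Finally I would invoke our main theorem for the data $(X, D, \sigma)$: it gives that $A^2(V, \sigma)$ is either infinite-dimensional or trivial for every open $V \subseteq X$, in particular for $V = U$; and since $A^2(U, \sigma)$ computed inside $M$ with its Euclidean volume form coincides with the one computed inside $X$ with the singular metric $\sigma$, this is exactly the assertion. I expect the only genuine difficulty to be the bookkeeping at the punctures: one must check that the restricted Euclidean form really does belong to the precise class of singular divisorial metrics to which the main theorem applies — that no logarithmic or worse corrections intervene, that the leading exponents are controlled by an honest effective divisor, and that all of this is independent of the choices of $t$ and of the projective embedding used to compactify.
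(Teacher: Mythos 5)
Your reduction---compactify, normalize, identify $M$ with $X\setminus\{p_1,\dots,p_k\}$, and compute that $\sigma\asymp|t|^{-2m_i}$ near each puncture with $m_i=1+\max_j a_j\ge 2$---is correct and is essentially the route the paper takes (your asymptotic computation reproduces the one behind the affine multiplicity divisor $D^{\mathbb A}_{\mathrm m}$). However, the concluding step has a genuine gap, compounded by a sign error that hides the crux. In the paper's convention (Definition \ref{def:gyokosmetrika}) a $D$-volume form behaves like $|t|^{2m_p}$ at a point whose coefficient in $D$ is $m_p$; a pole of order $2m_i$ therefore corresponds to the divisor $D=-\sum_i m_i\,p_i$, which is anti-effective, not the effective divisor $\sum_i m_i\,p_i$ you wrote. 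More importantly, Theorem \ref{thm:gyokos} does \emph{not} assert that the Bergman space is ``infinite-dimensional or trivial'': it asserts it is infinite-dimensional or equal to $H^0(X,L_{D_U})$ with $D_U$ as in \eqref{E:DUdivizor}, a finite-dimensional space which is in general nonzero (if the divisor were effective, as you claim, this space would always contain the constants, contradicting triviality). The triviality in the statement is precisely where the negativity at infinity must be used, and your argument never uses it.

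To close the gap: if $X\setminus U$ is locally polar it is nowhere dense, so $\overline U=X$ and every $p_i$ lies in $\overline U\cap\operatorname{supp}(D_-)$; hence $D_U\le -\sum_i m_i\,p_i$, which has strictly negative degree since $m_i\ge 2$ and $k\ge 1$. As $X$ is connected, $H^0(X,L_{D_U})=\{\,s\in\mathcal M(X)\mid \operatorname{div}(s)+D_U\ge 0\,\}=\{0\}$, and the dichotomy becomes ``infinite-dimensional or trivial'' as desired. With this degree argument added (and the sign corrected), your proof coincides with the paper's, which packages the same mechanism into Corollary \ref{cor:szingWD} via the condition $\deg D^{\mathbb A}_{\mathrm m}<0$ and then specializes to smooth $M$, where the affine contribution $\sum_p\big(m(X,p)-r(X,p)\big)$ vanishes.
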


\noindent In fact, even more is true: if $Y\subset \bC^n$ is an affine algebraic curve with only ``mild" singularities, then the statement of Theorem \ref{intr:simaffin} holds for $Y$ (see Corollary \ref{cor:szingWD}). This is not true in general however: in subsection \ref{ss:aff} we give an example of a plane algebraic curve $Y \subset \bC^2$ and an open subset $U\subseteq Y$ whose Bergman space is neither infinite-dimensional, nor trivial. It follows that the Wiegerinck conjecture (as stated above) cannot be extended to arbitrary Stein varieties. There is however a more general version of the Wiegerinck dichotomy which does hold for all affine algebraic curves, see Theorem \ref{intr:affmain}.

Our proof of Theorem \ref{intr:simaffin} relies heavily on the fact that the curves in question are \emph{algebraic}. Hence, we propose the following analytic version.

\begin{conj}
    Let $M$ be a non-compact Riemann surface together with an analytic embedding $M \hookrightarrow \bC^n$. Then the statement of Theorem~\ref{intr:simaffin} holds for $M$.
\end{conj}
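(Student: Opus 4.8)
The plan would be to imitate the two-part structure behind Wiegerinck's theorem and its proof in this paper: a soft reduction valid on an arbitrary Riemann surface, followed by a step in which the embedding $M\hookrightarrow\bC^n$ is genuinely used. Since $M$ is non-compact and $\sigma$ is the Euclidean volume form, \emph{trivial} here means $A^2(U,\sigma)=\{0\}$, so the statement of Theorem~\ref{intr:simaffin} for $M$ amounts to: $A^2(U,\sigma)$ is never both nonzero and finite-dimensional. For the reduction, write $A^2(U,\sigma)$ as a Hilbert-space direct sum over the connected components of $U$; if some component contributes an infinite-dimensional summand we are done, and if every component contributes $\{0\}$ then $A^2(U,\sigma)=\{0\}$, so we may assume $U$ connected with $k:=\dim A^2(U,\sigma)$ finite and positive. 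Fix $0\neq f\in A^2(U,\sigma)$. For any $\varphi\in H^\infty(U)$ one has $\varphi^{\,j}f\in A^2(U,\sigma)$ for all $j\ge 0$, because $\|\varphi^{\,j}f\|^2\le\|\varphi\|_\infty^{2j}\|f\|^2$; hence $f,\varphi f,\dots,\varphi^{k}f$ are linearly dependent, and dividing by $f$ on the dense open set $\{f\neq 0\}$ shows, via the identity theorem, that $\varphi$ satisfies a nontrivial polynomial identity on $U$ and is therefore constant. Thus $H^\infty(U)=\bC$, and it remains to show that a connected open $U\subseteq M$ cannot satisfy $H^\infty(U)=\bC$ while carrying a nonzero square-integrable holomorphic function.

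When the embedding $M\hookrightarrow\bC^n$ is bounded this is immediate: some coordinate $z_i$ is non-constant on $M$, hence on $U$ by the identity theorem, and is bounded, so $1,z_i|_U,z_i^2|_U,\dots$ are linearly independent in $H^\infty(U)$; equivalently $z_i^{\,m}f\in A^2(U,\sigma)$ for all $m$ already yields infinitely many linearly independent Bergman functions. No smooth affine algebraic curve is bounded, so this is a new but trivial special case; it does make clear that the difficulty lies entirely at the ends of $M$ along which the coordinate functions are unbounded.

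In the algebraic situation the unbounded case is exactly where Corollary~\ref{cor:simaffin} invokes the singular-metric theorem of this paper: one passes to the normalization $X$ of the projective closure $\overline M\subseteq\bP^n$, a compact Riemann surface with $X\setminus M$ a finite point set, observes that the affine coordinates have \emph{poles} there, so that $\sigma$ extends to a singular metric of the divisorial type covered by that theorem, and descends the dichotomy on $X$ to $U$. For a merely analytic embedding this fails on two counts: even when $M$ has finite topological type the coordinate functions may have \emph{essential} singularities at the ideal boundary, so $\sigma$ need not be a divisorial metric on any compactification; and $M$ may have infinite genus or infinitely many ends, so there may be no compactification at all. \textbf{Supplying a replacement for the ``algebraic compactification plus divisorial metric'' input --- in effect, a version of the main theorem for non-divisorial singular metrics over surfaces of arbitrary conformal type --- is the main obstacle, and is precisely what the present methods do not reach.}

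A concrete line of attack is to use an exhaustion. As a non-compact Riemann surface $M$ is Stein, so write $M=\bigcup_j M_j$ with $M_1\Subset M_2\Subset\cdots$ relatively compact and Runge; on each $M_j$ the form $\sigma$ is bounded above and below. Starting from $0\neq f\in A^2(U,\sigma)$, with $\dim A^2(U,\sigma)$ assumed finite and with $U$ connected (hence itself a non-compact Stein Riemann surface, since a compact component of $M$ could not embed into $\bC^n$), one would try to construct holomorphic $g_j$ on $U$ with $g_jf\in A^2(U,\sigma)$ of controlled norm but with $L^2$-mass escaping every fixed compact set --- for instance by cutting off a locally defined function near a point $q_j\to\partial U$ with $f(q_j)\neq 0$ and correcting by a solution of $\bar\partial u=(\mathrm{const})\,\bar\partial\chi$ from Hörmander-type $L^2$ estimates, using that $\log|f|^2$ is subharmonic and that $|f|^2\sigma$ is the natural weight. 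Linear independence of the $g_jf$ would contradict finite-dimensionality. The crux is to keep the correction $u$ under $L^2$ control near the ideal boundary of $U$ with no geometric model for that boundary; a softer alternative would be a removability theorem for the potential-theoretically thin complements forced by $H^\infty(U)=\bC$, reducing to $U=M$, but even determining $A^2(M,\sigma)$ for a general analytically embedded $M$ seems to require the same end-analysis. I therefore expect the analysis near the ideal boundary of $U$ to be the principal difficulty.
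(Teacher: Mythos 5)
You are addressing a statement that the paper itself leaves open: it is recorded there as a conjecture precisely because the authors' route to Theorem~\ref{intr:simaffin} — pass to the projective closure, normalize, check that the pulled-back Euclidean form is a $D$-volume form for an explicit divisor with negative degree contributions at infinity, and invoke Theorem~\ref{thm:gyokos} — uses algebraicity in an essential way. So there is no proof in the paper to compare against, and your text, as you yourself state, does not supply one either. Your soft reductions are correct as far as they go: the Hilbert-space splitting over connected components; the observation that $0<\dim A^2(U,\sigma)<\infty$ forces $H^\infty(U)=\bC$ (powers of a bounded holomorphic function times a fixed nonzero $f$ stay in $A^2$, a linear relation plus the identity theorem makes $\varphi$ a root of a polynomial, hence constant); and the consequent disposal of the case of a bounded embedding. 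These are fine but peripheral, and indeed vacuous if ``analytic embedding'' is read as a proper embedding, since a closed one-dimensional subvariety of $\bC^n$ is never bounded.

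The genuine gap is the one you flag yourself: with no algebraic compactification there is in general no compact Riemann surface on which $\sigma$ extends to a $D$-volume form, so the divisorial dichotomy of Theorem~\ref{thm:gyokos} is unavailable, and your proposed substitute — producing sections $g_jf$ with escaping $L^2$-mass via a cutoff corrected by a Hörmander-type $\bar\partial$-estimate with weight $|f|^2\sigma$ — is a sketch whose decisive estimate (control of the correction near the ideal boundary of $U$) is exactly what is missing. A further caution about your reduction: $H^\infty(U)=\bC$ is strictly weaker than the potential-theoretic smallness that drives the paper's argument; already in the plane there are non-polar compact sets of zero analytic capacity, so knowing only that $U$ carries no nonconstant bounded holomorphic functions does not feed into the polarity/removability results (Carleson's characterization, \cite{Sz22} Proposition 2.1, \cite{GGV22}) on which the known proofs pivot, and the ``softer alternative'' of reducing to $U=M$ by removability is therefore not available either. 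In short: a correct framing plus easy special cases, but the conjecture remains as open after your proposal as it is in the paper.
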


\noindent While the formulation of Theorem 1.1 is a direct generalization of Wiegerinck’s theorem, its
proof relies on a projective version of the dichotomy.

Let $X \subset \mathbb{P}^n$ be a projective algebraic curve with volume form $\sigma$ and let $E \to X$ be a holomorphic vector bundle with Hermitian metric $h$. Given an open subset $U \subseteq X$, the Bergman space of square-integrable holomorphic sections is denoted by $A^2(U,\sigma,E,h)$. Our main objective is to study these Bergman spaces. Using the normalization of $X$, this problem essentially reduces to studying Bergman spaces over open subsets of a compact Riemann surface equipped with a certain ``singular" volume form. This leads us to considering the following situation.

Let $M$ be a compact Riemann surface together with a divisor $D \in \text{Div}(M)$. By a
$D$-volume form $\rho$ on $M$ we mean a volume form on $M \setminus \text{supp}(D)$ with ``vanishing orders" and ``pole orders" governed by $D$ (see Definition \ref{def:gyokosmetrika}). In this context, the Wiegerinck dichotomy takes on the following form:

\begin{theorem}\label{intr:main}\normalfont{\textbf{[Theorem \ref{thm:gyokos}]}}
    \textit{Let $M, D$ and $\rho$ be as above and let $E \to M$ be a holomorphic vector bundle with Hermitian metric $h$. Given an open subset $U \subseteq M$, then} 
        $$\dim A^2(U,\rho,E,h) < \infty \quad \Leftrightarrow \quad \begin{array}{c}
        M \setminus U\\
        \text{is locally polar}
        \end{array} \quad \Leftrightarrow \quad A^2(U,\rho,E,h) = H^0(M,E\otimes L)$$
        \textit{for some holomorphic line bundle $L \in \text{\normalfont Pic}(M)$ depending on both $D$ and $U$, but not on $E$.}
\end{theorem}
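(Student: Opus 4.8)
I will prove the three conditions equivalent via (iii)$\Rightarrow$(i)$\Rightarrow$(ii)$\Rightarrow$(iii). The implication (iii)$\Rightarrow$(i) is immediate, as $H^0(M,E\otimes L)$ is finite-dimensional for every holomorphic vector bundle over the compact surface $M$. The bundle $L$ is forced by a local computation: writing $D=\sum_p a_p\,p$ and using a holomorphic coordinate $z$ centred at $p\in\operatorname{supp}(D)$, the $D$-volume form $\rho$ is, by Definition~\ref{def:gyokosmetrika}, comparable near $p$ to $|z|^{2a_p}$ times a smooth positive form, so comparing the Laurent expansion of a holomorphic section with $\int|z|^{2k+2a_p}\,dV<\infty$ shows that a member of the Bergman space may have a pole at $p$ of order at most $\lceil a_p\rceil$ when $a_p>0$ and $p\notin U$, must vanish there to order at least $\lfloor-a_p\rfloor$ when $a_p<0$, and is otherwise unrestricted. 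Hence one must take $L:=\cO_M(D_U)$ with
\[
D_U=\sum_{\substack{a_p>0,\ p\notin U}}\lceil a_p\rceil\,p\ -\sum_{a_p<0}\lfloor-a_p\rfloor\,p ,
\]
which depends only on $D$ and on $U\cap\operatorname{supp}(D)$, not on $E$.

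For (ii)$\Rightarrow$(iii), suppose $M\setminus U$ is locally polar. Then it is nowhere dense and of measure zero, so $U$ is dense and restriction of sections is injective; moreover a section of $E\otimes L$ restricts into $A^2(U,\rho,E,h)$, because near each $p\in\operatorname{supp}(D)$ the prescribed zero (respectively admissible pole) order encoded in $D_U$ exactly balances the factor $|z|^{2a_p}$ in $\rho$, and away from $\operatorname{supp}(D)$ everything is smooth on the compact $M$. Thus $H^0(M,E\otimes L)\hookrightarrow A^2(U,\rho,E,h)$. Conversely let $f\in A^2(U,\rho,E,h)$. On $M\setminus\operatorname{supp}(D)$ the form $\rho$ is smooth and positive, and locally polar sets are removable for holomorphic sections square-integrable with respect to a smooth volume form --- this is the theorem of Gallagher--Gupta--Vivas cited in the introduction, and in a local trivialization it also reduces to the classical removability of polar sets for the one-variable Bergman space \cite{Ca83,Wiegerinck} --- so $f$ extends holomorphically across $(M\setminus U)\setminus\operatorname{supp}(D)$, and at the points of $\operatorname{supp}(D)$ its Laurent order is bounded as above. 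The result is a global meromorphic section of $E$ with $\operatorname{div}(f)+D_U\ge 0$, i.e.\ an element of $H^0(M,E\otimes L)$. This proves (ii)$\Rightarrow$(iii), hence also (ii)$\Rightarrow$(i).

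The implication (i)$\Rightarrow$(ii) is the heart of the argument and I would prove its contrapositive. Suppose $M\setminus U$ is not locally polar; then neither is $(M\setminus U)\setminus\operatorname{supp}(D)$, so there is a coordinate disc $\Delta\Subset M\setminus\operatorname{supp}(D)$ trivializing $E$, on which $\rho$ and $h$ are comparable to the flat data, together with a compact non-polar set $K\subseteq\Delta$ contained in $M\setminus U$. Applying the one-variable dichotomy to $\bC\setminus K$ --- concretely, using the finiteness of the logarithmic energy of an equilibrium measure on $K$ and the fact that the Cauchy transform of a finite-energy signed measure of total mass zero on $K$ lies in $L^2(\bC)$ and is holomorphic off $K$ --- one obtains an infinite-dimensional space of holomorphic functions on $\bC\setminus K$ that are $L^2$ over $\bC$; restricting and trivializing yields an infinite-dimensional space $V$ of $L^2$ holomorphic sections on $\Delta\cap U$, each holomorphic near $\partial\Delta$ as well. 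I would transfer these by the standard cut-off and $\bar\partial$-correction: with $\chi\equiv1$ near $K$ and $\operatorname{supp}\chi\subseteq\Delta$, the form $(\bar\partial\chi)v$ is $L^2$ on $U$ and supported away from both $K$ and $\operatorname{supp}(D)$, so --- using that the non-compact Riemann surface $U$ is Stein --- I solve $\bar\partial g=(\bar\partial\chi)v$ on $U$ with a plurisubharmonic weight that reproduces the singular density of $\rho\,h$ near $\operatorname{supp}(D)$ (the local models $-2a_p\log|z|$ being subharmonic) and has logarithmic poles at finitely many chosen points $q_1,\dots,q_m\in\Delta\cap U$. Then $g\in L^2(U,\rho,h)$ and $g(q_i)=0$, the map $v\mapsto\chi v-g$ carries $V$ into $A^2(U,\rho,E,h)$ and commutes with evaluation at the $q_i$, and choosing the $q_i$ so that this evaluation is onto $\bC^m$ on $V$ forces $\dim A^2(U,\rho,E,h)\ge m$; letting $m\to\infty$ finishes the proof.

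The step I expect to be the main obstacle is exactly this transfer: running Hörmander's $L^2$-estimate on the open Stein surface $U$ with a weight that is forced near $\operatorname{supp}(D)$ to behave like $|z|^{2a_p}$ --- hence to be unbounded below --- while being negative enough at the auxiliary points $q_i$ to kill the correction there, and checking that this weight can be taken plurisubharmonic with a strictly positive curvature form on the support of $(\bar\partial\chi)v$. The remaining points, routine but requiring care, are the local Laurent analysis at $\operatorname{supp}(D)$ (which rests on the precise definition of a $D$-volume form and must also verify that every section of $E\otimes L$ is $\rho$-square-integrable on $U$) and the potential-theoretic inputs that non-polarity is detected in arbitrarily small discs and that non-polar compacta carry finite-energy measures.
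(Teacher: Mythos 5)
Your easy implications match the paper: (iii)$\Rightarrow$(i) is the same finiteness-of-$H^0$ argument, and (ii)$\Rightarrow$(iii) is the paper's route (removability of locally polar sets for $L^2$ holomorphic sections away from $\text{supp}(D)$, then the Laurent/weight analysis at the points of $\text{supp}(D)$ giving $\text{div}(s)+D_U\geq 0$); note the removability input is Siciak's theorem as quoted in [Sz22, Prop.\ 2.1], not the Gallagher--Gupta--Vivas dichotomy itself, and your divisor $D_U$ differs from the paper's (you impose vanishing at \emph{all} points of $\text{supp}(D_-)$, the paper only on $\overline{U}\cap\text{supp}(D_-)$), which is harmless here since under (ii) the set $M\setminus U$ is nowhere dense, so $\overline{U}=M$.

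The real divergence, and the gap, is in (i)$\Rightarrow$(ii). The paper does \emph{not} reprove the dichotomy: it dominates $\rho$ by a $D_-$-volume form $\tilde\rho\geq\rho$ (inflating the vanishing factors near $\text{supp}(D_+)$), so $A^2(U,\tilde\rho,E,h)\subseteq A^2(U,\rho,E,h)$ is finite-dimensional, and then applies the isometry $s\mapsto s\otimes s_{D_-}$ of Theorem \ref{thm:Phiisom}(1) to identify this space with a Bergman space of $E\otimes L_{D_-}$ with respect to a genuine continuous volume form, at which point the known theorem of Gallagher--Gupta--Vivas yields local polarity of $M\setminus U$. You instead attempt a from-scratch Carleson-type construction plus a cut-off and H\"ormander $\bar\partial$-correction on $U$, and as written this has two genuine problems. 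First, your weight is prescribed to ``reproduce the singular density of $\rho h$ near $\text{supp}(D)$'', with the parenthetical claim that the local models $-2a_p\log|z|$ are subharmonic; this is false when $a_p>0$ (there the model is superharmonic), and moreover matching the \emph{vanishing} of $\rho$ at interior points of $\text{supp}(D_+)$ would only force the correction $g$ to be $L^2(\rho)$ there, allowing it a pole of order up to $a_p$ -- but elements of $A^2(U,\rho,E,h)$ must by Definition \ref{def:simabergman} be holomorphic across $\text{supp}(D)\cap U$, so your map $v\mapsto \chi v-g$ need not land in the Bergman space. (The fix is to use a locally bounded weight near $\text{supp}(D_+)\cap U$, where $\rho$ is dominated by a smooth form; you do not do this.) Second, the weight must simultaneously be subharmonic on all of $U$, strictly subharmonic on $\text{supp}\big((\bar\partial\chi)v\big)$, and bounded above on $U$ away from $\text{supp}(D_-)$ so that $h\rho\lesssim e^{-\varphi}\,dA$ holds up to $\partial U$; in particular you cannot gain positivity by adding a strictly subharmonic exhaustion of $U$. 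You explicitly flag this as ``the main obstacle'' and leave it unresolved, so the hard implication is not proved. Either resolve that weight construction (e.g.\ by solving on $M$ minus a compact non-polar piece of $K$ rather than on $U$), or -- much shorter -- adopt the paper's domination-plus-isometry reduction to the smooth-volume-form theorem.
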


\noindent The line bundle $L$ accounts for the fact that the vanishing locus of $\rho$ on the boundary of $U$ allows for meromorphic $L^2$-sections, whereas the poles of $\rho$ force $L^2$-sections to have certain vanishing orders. This implies the obvious inclusion
$$H^0(M,E\otimes L) \subseteq A^2(U,\rho,E,h)$$
for all open subsets $U \subseteq M$. If $D=0$, the line bundle $L$ turns out to be trivial. In this special case, Theorem \ref{intr:main} was proven in \cite{Sz22, GGV22} for $M=\mathbb{P}^1$ and in \cite{GGV24} in the general case.

Now, let $X$ be a projective algebraic curve with volume form $\sigma$ and denote by $\pi : \widetilde{X} \to X$ its normalization. The pullback $\pi^*\sigma$ turns out to be a $D_\mathbf{m}$-volume form associated to a certain effective divisor $D_\mathbf{m} \in \text{Div}(\widetilde{X})$ depending on the singularities of $X$ (see Definition \ref{def:multdiv}). Applying Theorem \ref{intr:main} to $\widetilde{X}$ and $\pi^*\sigma$ yields

\begin{theorem}\label{intr:projmain}\normalfont{\textbf{[Theorem \ref{thm:projmain}]}}
    \textit{Let $X$ be a projective algebraic curve and let $E \to X$ be a holomorphic vector bundle with Hermitian metric $h$. Given an open subset $U \subseteq X$, then}
        $$\dim A^2(U,\sigma,E,h) < \infty \quad \Leftrightarrow \quad \begin{array}{c}
        X \setminus U\\
        \text{is locally polar}
        \end{array} \quad \Leftrightarrow \quad A^2(U,\sigma,E,h) = H^0(X,E\otimes \mathcal{P}_X)$$
        \textit{for some coherent analytic sheaf $\mathcal{P}_X$ depending on the singularities of $X$ in the closure of $U$.}
\end{theorem}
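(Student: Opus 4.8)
The plan is to deduce Theorem~\ref{intr:projmain} from Theorem~\ref{intr:main} by passing to the normalization $\pi \colon \widetilde{X} \to X$. First I would set up the correspondence between holomorphic objects upstairs and downstairs. Since $\pi$ is a finite map which is a biholomorphism over the smooth locus $X^*$, pullback gives an identification between holomorphic sections of $E$ over an open set $V \subseteq X$ and holomorphic sections of $\pi^*E$ over $\pi^{-1}(V)$ that are ``compatible at the preimages of the singular points'' — precisely, the subsheaf of $\pi_*(\pi^*E)$ cut out by the conductor ideal, which is what the sheaf $\mathcal{P}_X$ will encode. Concretely, a holomorphic section of $E$ on $U$ pulls back to a holomorphic section of $\widetilde{E} := \pi^*E$ on $\widetilde{U} := \pi^{-1}(U)$, and conversely a section of $\widetilde{E}$ on $\widetilde{U}$ descends iff it satisfies the gluing conditions imposed by the singularities of $X$ lying in $U$ (for each node-type point, the values at the two preimages agree; for a cusp, a first-order compatibility; in general, vanishing modulo the conductor).

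Next I would check that this correspondence is an isometry of Bergman spaces. Since $\pi$ restricts to a biholomorphism $\widetilde{X}^* \to X^*$ off a finite (hence measure-zero) set, we have $\int_{U^*} |f|^2_h\, \sigma = \int_{\widetilde{U}^*} |\pi^*f|^2_{\pi^*h}\, \pi^*\sigma$, so $f \in A^2(U,\sigma,E,h)$ iff $\pi^*f \in A^2(\widetilde{U}, \pi^*\sigma, \widetilde{E}, \pi^*h)$. The key input here is the claim (asserted in the excerpt, to be established earlier in the paper) that $\pi^*\sigma$ is a $D_{\mathbf{m}}$-volume form on $\widetilde{X}$ for an effective divisor $D_{\mathbf{m}}$ supported on $\pi^{-1}(\mathrm{Sing}\,X)$ — this is exactly the hypothesis needed to invoke Theorem~\ref{intr:main}. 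Thus
$$A^2(U,\sigma,E,h) \;\cong\; A^2(\widetilde{U},\pi^*\sigma,\widetilde{E},\pi^*h) \cap \pi^*\bigl(\mathcal{O}(E)(U)\bigr),$$
the intersection inside the space of holomorphic sections of $\widetilde{E}$ over $\widetilde{U}$, where the second factor is the descent condition.

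Now apply Theorem~\ref{intr:main} on $\widetilde{X}$ with the divisor $D_{\mathbf{m}}$ and bundle $\widetilde{E}$. It gives: $\dim A^2(\widetilde{U},\pi^*\sigma,\widetilde{E},\pi^*h) < \infty$ iff $\widetilde{X}\setminus\widetilde{U}$ is locally polar, in which case the Bergman space equals $H^0(\widetilde{X}, \widetilde{E}\otimes L)$ for a line bundle $L$ depending only on $D_{\mathbf{m}}$ and $\widetilde{U}$ (not on $E$). Two observations finish the reduction. (i) Local polarity transfers across $\pi$: since $\pi$ is a finite holomorphic map and $X\setminus U = \pi(\widetilde{X}\setminus\widetilde{U})$ up to a finite set, $X\setminus U$ is locally polar iff $\widetilde{X}\setminus\widetilde{U}$ is — here one uses that local polarity is preserved under finite proper holomorphic maps and is insensitive to finite sets, working in local charts near the singular points. (ii) Intersecting with the descent condition: when $\widetilde{X}\setminus\widetilde{U}$ is locally polar, every element of $H^0(\widetilde{X},\widetilde{E}\otimes L)$ that additionally satisfies the singularity-gluing conditions (which are closed linear conditions imposed only at points of $\pi^{-1}(\mathrm{Sing}\,X)$ lying in $\overline{U}$) is the pullback of a global holomorphic section of a coherent sheaf $\mathcal{P}_X$ on $X$; one defines $\mathcal{P}_X := \pi_*(\widetilde{E}\otimes L)^{\mathrm{gl}}$, the subsheaf of $\pi_*(\widetilde{E}\otimes L)$ satisfying the conductor conditions, which is coherent as a kernel of a map of coherent sheaves, and one checks it depends on $E$ only through the tensor factor $E$ (the twisting and gluing data depend on the singularities of $X$ in $\overline{U}$, not on $E$), so it can be written as $E\otimes \mathcal{P}_X$ for a coherent sheaf $\mathcal{P}_X$ independent of $E$.

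The main obstacle I anticipate is the bookkeeping in step three: making precise the descent condition in a way that is simultaneously (a) the right $L^2$ condition — i.e. verifying that a section of $\widetilde{E}\otimes L$ over $\widetilde{U}$ descends to the correct sheaf on $X$ when $\pi^*\sigma$ has the vanishing/pole behavior dictated by $D_{\mathbf{m}}$, so that the line bundle $L$ upstairs and the gluing conditions combine to give precisely sections of $E\otimes\mathcal{P}_X$ — and (b) compatible with coherence, so that $\mathcal{P}_X$ genuinely is a coherent analytic sheaf on $X$ and not merely a subspace of global sections. A secondary subtlety is that $L$ in Theorem~\ref{intr:main} depends on $\widetilde{U}$, so when $X\setminus U$ fails to be locally polar one must still argue infinite-dimensionality directly: but this is immediate, since $A^2(U,\sigma,E,h)$ maps isometrically into $A^2(\widetilde{U},\pi^*\sigma,\widetilde{E},\pi^*h)$ and one can exhibit infinitely many linearly independent pullbacks, or simply note that the intersection of an infinite-dimensional Bergman space with the finite-codimension descent subspace is still infinite-dimensional.
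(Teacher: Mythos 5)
Your overall strategy is exactly the paper's: pull back along the normalization $\pi$, use that $\pi^*\sigma$ is a $D_{\mathbf m}$-volume form (Proposition \ref{prop:projDmvol}), apply Theorem \ref{thm:gyokos} upstairs, transfer local polarity, and identify the descent subspace $A^2(\widetilde{U},\pi^*\sigma,\pi^*E,\pi^*h)\cap\pi^*\mathcal{O}_X(U,E)$ with the global sections of a coherent sheaf; your finite/infinite dichotomy via the finite codimension of $\pi^*\mathcal{O}_X(U,E)$ is precisely Remark \ref{rem:vegeskodim} and Lemma \ref{lem:uaveges}. The genuine gap is in the step you yourself flagged: you impose the ``singularity-gluing (conductor) conditions at the preimages of singular points lying in $\overline{U}$.'' When $X\setminus U$ is locally polar one has $\overline{U}=X$, so this would impose descent at \emph{every} singular point, including those on $\partial U$ --- and there no descent condition holds: the $L^2$ condition only bounds the pole order at each branch preimage $\tilde p_i$ by $m(X_i,p)-1$, which is exactly where the twist $L$ of Theorem \ref{thm:gyokos} is supported. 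Concretely, for a unicuspidal rational curve of multiplicity $m$ with $U=X\setminus\{p\}$ the Bergman space is $m$-dimensional (spanned upstairs by $1,t^{-1},\dots,t^{-(m-1)}$), and these elements satisfy no conductor condition at $\tilde p$; for a two-component nodal curve with the node on $\partial U$ the two branches carry independent constants. So your characterization would produce a strictly smaller space than $A^2(U,\sigma,E,h)$. The correct bookkeeping --- descent only at singular points inside $U$, pole allowance governed by $D_{\mathbf m}$ only at singular points on $\partial U$ --- is what the paper encodes in the sheaf $\mathcal{P}_X^{\partial U}$ (Definition \ref{def:thesheaf}); moreover its coherence is proved not by cutting a subsheaf out of a pushforward from $\widetilde{X}$, but by realizing it as the pushforward of a line bundle from the \emph{partial} normalization at the boundary singular points (Proposition \ref{prop:coherence}); the paper explicitly warns that the full-normalization pushforward is in general strictly too big, so your kernel construction must be set up at the right points for both the $L^2$ identification and coherence to come out.

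A second, smaller issue: you treat the equivalence ``$X\setminus U$ locally polar $\Leftrightarrow$ $\pi^{-1}(X\setminus U)$ locally polar'' as a standard fact about finite maps. On a singular curve the direction from $\widetilde{X}$ down to $X$ is not off-the-shelf: one must produce a subharmonic function on $X$ itself near each singular point, which the paper does by pushing forward subharmonic functions along the Puiseux homeomorphisms and invoking the Fornaess--Narasimhan criterion (Theorem \ref{thm:loc-polar-equiv}); this needs either a proof or a citation. Finally, in the non-polar case your first fallback (``exhibit infinitely many linearly independent pullbacks'') is circular --- the pullbacks are exactly the descent subspace you are trying to show is large --- but your second argument, via finite codimension of the descent subspace, is correct and is the paper's.
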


\noindent The sheaf $\mathcal{P}_X$ arises as the pushforward of a line bundle on a partial normalization of $X$, which is closely related to (but not the same as) the line bundle mentioned in Theorem \ref{intr:main}.\vspace{3mm}

\noindent The affine version of Theorem \ref{intr:projmain} is another application of Theorem \ref{intr:main}. Let $Y$ be an affine algebraic curve embedded into some Euclidean space $\bC^n$ and let $\sigma$ be the volume form on $Y$ given by the restriction of the Euclidean K\"ahler form. Consider the projective closure $X:=\overline{Y}$ and its normalization $\pi : \widetilde{X} \to X$. In this case the pullback $\pi^*\sigma$ turns out to be a $D_\mathbf{m}^\mathbb{A}$-volume form, where $D_\mathbf{m}^\mathbb{A}$ is a divisor on $\widetilde{X}$ depending on the singularities of $Y$ and the intersection of its closure with the hyperplane at infinity.

\begin{theorem}\label{intr:affmain}\normalfont{\textbf{[Theorem \ref{thm:affmain}]}}
    \textit{Let $Y$ be an affine algebraic curve with volume form $\sigma$. Given an open subset $U \subseteq X$, then}
        $$\dim A^2(U,\sigma) < \infty \quad \Leftrightarrow \quad \begin{array}{c}
        Y \setminus U\\
        \text{is locally polar}
        \end{array} \quad \Leftrightarrow \quad A^2(U,\sigma) = H^0(X,\mathcal{A}_X)$$
        \textit{for some coherent analytic sheaf $\mathcal{A}_X$ depending on the singularities of $Y$ in the closure of $U$ and at infinity.}
\end{theorem}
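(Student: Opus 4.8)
The plan is to deduce this from Theorem~\ref{thm:gyokos} by passing to the normalization $\pi\colon\widetilde X\to X$ of the projective closure $X=\overline Y\subset\mathbb P^n$, a compact Riemann surface. The first step is to check that $\pi^*\sigma$ is a $D_\mathbf m^{\mathbb A}$-volume form in the sense of Definition~\ref{def:gyokosmetrika}, $D_\mathbf m^{\mathbb A}$ being the divisor attached to $Y$ and to the hyperplane at infinity. Over $Y^*$ this is automatic, $\pi$ being a biholomorphism there; over a point $\tilde q$ lying above an affine singularity one expands a local parametrization $t\mapsto(z_1(t),\dots,z_n(t))$ and finds $\pi^*\sigma=\bigl(\sum_j|z_j'(t)|^2\bigr)\tfrac{i}{2}\,dt\wedge d\bar t$, which has a zero of the prescribed (effective) order; over a point $\tilde p$ above the hyperplane at infinity the $z_j$ acquire poles, of orders $k_1,\dots,k_n$ say, and the same expression then has a \emph{pole} of order $1+\max_j k_j$, contributing the negative part of $D_\mathbf m^{\mathbb A}$. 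The point requiring care is that this local model is genuinely of the form ``$|t|^{2\alpha}$ times a smooth positive form'' demanded by Definition~\ref{def:gyokosmetrika}, which holds because the leading coefficient $\sum_{k_j=\max}k_j^2|c_j|^2$ is strictly positive.

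Next, write $\widetilde U:=\pi^{-1}(U)$. Since $\pi$ restricts to a biholomorphism $\pi^{-1}(Y^*)\to Y^*$ and $\widetilde X\setminus\pi^{-1}(Y^*)$ is finite, pullback of functions is an $L^2$-isometry that identifies $A^2(U,\sigma)$ with the subspace of those $g\in A^2(\widetilde U,\pi^*\sigma)$ (Bergman space for the trivial line bundle) lying in the image of $\mathcal O_X(U)$ under $\mathcal O_X\hookrightarrow\pi_*\mathcal O_{\widetilde X}$, i.e. those $g$ that \emph{descend} to $U$. Similarly $\widetilde X\setminus\widetilde U=\pi^{-1}(Y\setminus U)\cup\pi^{-1}(X\setminus Y)$ is $\pi^{-1}(Y\setminus U)$ together with a finite set, hence locally polar precisely when $Y\setminus U$ is.

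Now apply Theorem~\ref{thm:gyokos} to $(\widetilde X,D_\mathbf m^{\mathbb A},\pi^*\sigma)$ with the trivial bundle. When $Y\setminus U$ is locally polar it gives $A^2(\widetilde U,\pi^*\sigma)=H^0(\widetilde X,L)$ for the line bundle $L=L(D_\mathbf m^{\mathbb A},\widetilde U)$, via a fixed meromorphic identification of $L$ with $\mathcal O_{\widetilde X}$ over $\widetilde U$. Transporting the descent condition through this identification imposes, at each of the finitely many singular points of $X$ lying in $\overline U$, a linear condition on germs of sections of $\pi_*L$; the resulting subsheaf $\mathcal A_X\subseteq\pi_*L$ is coherent (it is the kernel of a morphism from $\pi_*L$ to a skyscraper sheaf, equivalently --- as in Theorem~\ref{thm:projmain} --- the pushforward of a line bundle along a suitable partial normalization of $X$), and by construction $H^0(X,\mathcal A_X)=A^2(U,\sigma)$. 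The remaining two implications are then formal. Since $X$ is compact and $\mathcal A_X$ coherent, $H^0(X,\mathcal A_X)$ is finite-dimensional (it also injects into the finite-dimensional $H^0(\widetilde X,L)$), giving ``$A^2(U,\sigma)=H^0(X,\mathcal A_X)\Rightarrow\dim<\infty$''. For ``$\dim<\infty\Rightarrow Y\setminus U$ locally polar'' I argue by contraposition: if $Y\setminus U$ is not locally polar then neither is $\widetilde X\setminus\widetilde U$, so Theorem~\ref{thm:gyokos} applied with $E=\mathcal O_{\widetilde X}(-C)$, $C$ the conductor divisor of $\pi$ (so that sections of $\mathcal O_{\widetilde X}(-C)$ over $\widetilde U$ automatically descend to $U$ and have $L^2$-norms comparable to the $\pi^*\sigma$-norms), produces an infinite-dimensional subspace of $A^2(U,\sigma)$.

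Since the substantive analysis is already contained in Theorem~\ref{thm:gyokos}, I expect the real work to be the bookkeeping in the third step rather than a single hard obstacle: identifying $L$ explicitly from $D_\mathbf m^{\mathbb A}$ and $U$ --- in particular separating the three effects of $\pi^*\sigma$ (interior zeros, which do not constrain $A^2(\widetilde U,\pi^*\sigma)$; boundary zeros, which permit poles of $L^2$-sections; poles at infinity, which force vanishing) --- and then verifying that the descent conditions combine with $L$ into a coherent sheaf $\mathcal A_X$ with the advertised dependence on the singularities of $Y$ in $\overline U$ and at infinity. This is exactly where the affine case goes beyond Theorem~\ref{thm:projmain}: $D_\mathbf m^{\mathbb A}$ is no longer effective, and one must check both that its mixed zero/pole structure still defines a $D$-volume form and that $\mathcal A_X$ is well-behaved at the (possibly singular) points of $\overline Y$ on the line at infinity.
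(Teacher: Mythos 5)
Your proposal follows the same overall route as the paper: show that $\pi^*\sigma$ is a $D_\mathbf{m}^{\mathbb A}$-volume form (your affine-coordinate computation, with pole order $1+\max_j k_j$ at a branch over infinity, is the paper's projective-chart computation in disguise, since $\max_j k_j=(X_i\cdot H_\infty)_p$), identify $A^2(U,\sigma)$ isometrically with the descending part of $A^2(\widetilde U,\pi^*\sigma)$, invoke Theorem \ref{thm:gyokos} on $\widetilde X$, and use Theorem \ref{thm:loc-polar-equiv}/Corollary \ref{rem:locpolgen} to move local polarity across $\pi$. The one genuinely different step is the implication ``$\dim A^2(U,\sigma)<\infty\Rightarrow Y\setminus U$ locally polar'': the paper gets the two-way transfer of finite-dimensionality from the finite codimension of $\pi^*\mathcal{O}_X(U)$ in $\mathcal{O}_{\widetilde X}(\widetilde U)$ (Remark \ref{rem:vegeskodim}), while you apply Theorem \ref{thm:gyokos} to the twisted bundle $\mathcal{O}_{\widetilde X}(-C)$, $C$ the conductor divisor, so that sections automatically descend. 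This works: germs vanishing along $C$ lie in the conductor ideal, hence in $\mathcal{O}_X$, and since the relevant metric factor is bounded above on the compact $\widetilde X$, the (one-sided) norm estimate suffices to inject an infinite-dimensional space into $A^2(U,\sigma)$ --- note that genuine two-sided ``comparability'' fails at $\mathrm{supp}(C)$ and is not needed. The paper's Remark \ref{rem:vegeskodim} argument is shorter but rests on the same finiteness of the local rings' colength, so the two are close in substance. One wording you should correct in the construction of $\mathcal{A}_X$: the descent conditions transported from $\pi^*\mathcal{O}_X(U)$ are imposed only at the singular points of $X$ lying \emph{in} $U$, not at all singular points in $\overline U$; at singular points on $\partial U$ no descent may be demanded (this is precisely the source of the allowed poles of order $m(X_i,p)-1$ and of a nonzero $L^2$-delta invariant), which is why the paper realizes the sheaf as a pushforward from the partial normalization at $\mathrm{Sing}X\cap\partial U$ and at infinity (Definition \ref{def:theaffsheaf}, Proposition \ref{prop:coherence}) --- your parenthetical identification with such a pushforward is the correct reading, and with that reading the proposal is sound.
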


\noindent The key difference between the projective and affine cases is that $D_\mathbf{m}$ is effective, while $D_\mathbf{m}^\mathbb{A}$ always has some strictly negative coefficients. It is precisely this negativity which yields the triviality of the Bergman spaces in Theorem \ref{intr:simaffin}.\vspace{3mm}

\noindent \textbf{Structure of the paper.} In section 2 we show that the Bergman spaces associated to $D$-volume forms are indeed Hilbert spaces and prove Theorem \ref{intr:main}. Section 3 provides a brief introduction to the theory of curve singularities and discusses locally polar sets on singular curves. Finally, in section 4 we derive Theorems \ref{intr:projmain} and \ref{intr:affmain} and give a couple of examples. We also define an ``$L^2$-version" of the delta invariant and compare it to the usual delta invariant.\vspace{3mm}

\noindent \textbf{Acknowledgement.} The authors would like to thank the referee for their thorough review of the paper and very useful suggestions.

\section{Bergman spaces on Riemann surfaces}

\noindent In this section we study Bergman spaces over a Riemann surface $M$ that
 are associated to a 
 Hermitian holomorphic vector bundle $E\rightarrow M$ and a \emph{singular} volume form $\rho$ on 
 $M$. The nature of the singularities of $\rho$ will be specified later in this section.

\subsection{The Bergman space of a domain}
 \noindent We begin with the case when $M=U\subset\bC$ is an open subset and $E$ is the trivial line bundle.
Given a measurable function $\mu:U\rightarrow[0,\infty]$, let $L^2(U,\mu)$ be the Hilbert space of complex valued $\mu$-square integrable measurable functions on $U$, with the scalar product
$$
\langle f, g\rangle:=\int_U f(z)\overline{g(z)}\mu(z)\frac{idz\wedge d\bar{z}}{2}.
$$ 
  \begin{definition} 
  Denote by 
  $$A^2(U,\mu):=L^2(U,\mu)\cap\mathcal O(U),
  $$ the set of all $\mu$-square integrable holomorphic functions on $U$.  This space 
  is called the \emph{Bergman space} with weight $\mu$ (or the $\mu$-Bergman space) over $U$.
  \end{definition}

  \begin{remark}\label{rem:eltunos}
      When $\mu\equiv1$, it is simply denoted by $A^2(U)$. It is a closed subspace in $L^2(U)$ and thus a Hilbert space. Given a point $p \in U$ and an integer $m>0$, let
  $$
    A^2_m(U, p):=\{ \, f\in A^2(U) \ | \ f^{(j)}(p)=0, \, j=0,\dots, m \, \}.
$$
This is also a Hilbert space, as the evaluation maps $\text{ev}^j_p:f\mapsto f^{(j)}(p)$ are continuous linear functionals on 
$A^2(U)$ for all $p \in U$, cf. \cite{HKZ}.
  \end{remark}
For a general weight function $\mu$,
  $A^2(U,\mu)$ is still a linear subspace of  $L^2(U,\mu)$ (possibly equal to $\{0\})$,
but it is not necessarily a closed subspace i.e. not a Hilbert space. It is proved in \cite{HQW21} that $A^2(U,\mu)$ is closed in $L^2(U,\mu)$ iff the point evaluations are locally uniformly bounded. Pasternak-Winiarski in \cite{PW92} constructs a weight function  $\mu$ on the unit disk $D$, for which the point evalutaion at the origin is not continuous on  $A^2(D,\mu)$. Hence this is an example where the Bergman space is not a Hilbert space. On the other hand, under mild conditions on $\mu$, $A^2(U,\mu)$ is a Hilbert space as well, for instance when $1/\mu$ is a locally integrable function (\cite{PW90}, Proposition 2.1 and Theorem 3.1).

Fix an integer $m \in \bZ$. We will be interested in Bergman spaces of weight $\mu_m(z):=|z|^{2m}$ over an open subset $U \subseteq \bC$. When $m<0$, $1/\mu_m=|z|^{-2m}$
is continuous and hence locally integrable. When $m>0$, this does not hold, but 
$1/\mu$ is in $L^1_{\text{loc}}(U\setminus \{0\})$ and this suffices, see
(\cite{PW90}, Theorem 3.2). Therefore $A^2(U,\mu_m)$ is a Hilbert space for all integers $m$.\vspace{3mm}

\noindent For a subset $U\subset\bC$, let $U^*:=U\setminus \{0\}$. 

\begin{lemma}\label{lem:korlap}
     Let $0 \in U\subset\bC$ be open and  $m\in \mathbb{Z}$.
     \begin{enumerate}
        \item Suppose $m < 0$. Then the map
         $$\Phi_- : A^2(U,\mu_m) \to A^2(U), \quad f(z) \mapsto z^mf(z)$$
         is a surjective linear isometry. In this case we also have that
            $$ A^2(U,\mu_m)=A^2(U^*,\mu_m).$$
         \item Suppose $m > 0$. Then the map
         $$\Phi_+ : A^2(U,\mu_m) \to A^2_m(U,0), \quad f(z) \mapsto z^mf(z)$$
         is a surjective linear isometry.
     \end{enumerate}
     
\end{lemma}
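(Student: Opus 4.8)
The plan is to handle the two cases separately, each time verifying that the map $f(z) \mapsto z^m f(z)$ is well-defined onto the claimed target, is an isometry, and is onto. The isometry part is immediate in both cases: for $f, g \in A^2(U,\mu_m)$ we have
$$
\langle f, g \rangle_{\mu_m} = \int_U f \overline{g}\, |z|^{2m}\, \frac{i\, dz \wedge d\bar z}{2} = \int_U (z^m f)\overline{(z^m g)}\, \frac{i\, dz \wedge d\bar z}{2} = \langle z^m f, z^m g\rangle,
$$
so linearity and norm-preservation are automatic. The content is therefore in identifying the image and proving surjectivity.

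For part (1), with $m < 0$: if $f \in A^2(U,\mu_m)$ then $f$ is holomorphic on $U$, hence on $U^*$, so $z^m f$ is holomorphic on $U^*$ and lies in $A^2(U^*) = A^2(U)$ by the isometry computation above. The key point is that $A^2(U^*) = A^2(U)$ because $\{0\}$ is a removable set for $L^2$ holomorphic functions: any $h \in A^2(U^*)$ extends holomorphically across $0$ (a point has zero capacity, or elementarily: an isolated singularity at which $h$ is $L^2$ cannot be a pole or essential singularity, so it is removable). Conversely, given $h \in A^2(U) = A^2(U^*)$, set $f := z^{-m} h = z^{|m|} h$, which is holomorphic on $U$ (it vanishes at $0$) and satisfies $z^m f = h$, so $f \in A^2(U,\mu_m)$ by the same computation. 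Thus $\Phi_-$ is onto. The identity $A^2(U,\mu_m) = A^2(U^*,\mu_m)$ then follows since $z^m$ is bounded away from $0$ on compact subsets avoiding... more carefully, $\mu_m$ is locally integrable near $0$ when $m<0$, so again the origin is removable for functions in $A^2(U^*,\mu_m)$, giving the equality; alternatively transport the already-established equality $A^2(U^*) = A^2(U)$ through $\Phi_-$.

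For part (2), with $m > 0$: given $f \in A^2(U,\mu_m)$, a priori $f$ is only holomorphic on $U$, and $z^m f$ is holomorphic on $U$ with a zero of order at least $m$ at $0$, hence vanishes together with its first $m$ derivatives at $0$; moreover $\|z^m f\|_{A^2(U)} = \|f\|_{\mu_m} < \infty$, so $z^m f \in A^2_m(U,0)$. Conversely, suppose $h \in A^2_m(U,0)$. Since $h$ vanishes to order $\geq m+1$ at $0$ (as $h^{(j)}(0) = 0$ for $j = 0, \dots, m$), the function $f := h/z^m$ is holomorphic on $U^*$ and extends holomorphically across $0$ (with $f(0) = 0$); it lies in $A^2(U,\mu_m)$ because $\int_U |f|^2 |z|^{2m} = \int_U |h|^2 < \infty$. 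Hence $\Phi_+(f) = h$ and $\Phi_+$ is onto.

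The only genuine subtlety — and the step I would be most careful about — is the removability claim: that an isolated singularity of a holomorphic function which is square-integrable (against Lebesgue measure, or against $|z|^{2m}$ with $m<0$) is removable. This is standard but worth stating cleanly: near such a point write the Laurent expansion; an $L^2$ bound rules out a pole (whose modulus square is not locally integrable in dimension one past order... a simple pole already fails $L^2$) and, a fortiori, an essential singularity via the Casorati–Weierstrass/Laurent-coefficient estimates, so only the holomorphic part survives. With that in hand, the rest is the bookkeeping above, and the fact (already recorded in the excerpt) that $A^2(U,\mu_m)$ is a Hilbert space ensures both sides are genuinely Hilbert spaces so that ``surjective linear isometry'' is meaningful.
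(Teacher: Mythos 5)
Your proof is correct in substance and follows the same route as the paper: the paper disposes of the lemma by noting that multiplication by $z^m$ is tautologically an isometry for the weight $\mu_m$ and that everything reduces to the $m=0$ case, i.e.\ the classical removability of isolated $L^2$ singularities, which is exactly the skeleton of your argument. Two slips are worth correcting. First, for $m<0$ the weight $\mu_m=|z|^{2m}$ is \emph{not} locally integrable at $0$ (for $m\le -1$ the radial integral $\int_0^\varepsilon r^{2m+1}\,dr$ diverges); what you actually need for the removability step is that $\mu_m$ is bounded below near $0$, so that membership in $L^2(\mu_m)$ on $U^*$ forces local Lebesgue $L^2$-integrability near the origin — or simply use your alternative argument transporting $A^2(U^*)=A^2(U)$ through multiplication by $z^m$, which is fine as written. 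Second, in part (2) there is an off-by-one: a zero of order at least $m$ gives vanishing of the derivatives of orders $0,\dots,m-1$ only, so with the paper's literal definition of $A^2_m(U,0)$ (conditions $j=0,\dots,m$) the map $\Phi_+$ would not even land in $A^2_m(U,0)$ (take $f\equiv 1$ on a bounded $U$: $(z^mf)^{(m)}(0)=m!\neq 0$). The statement is true once $A^2_m(U,0)$ is read as the $L^2$ holomorphic functions vanishing to order at least $m$ at $0$, and then your surjectivity argument should start from order $\ge m$ rather than $\ge m+1$; with that reading your argument is complete and matches the paper's.
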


\begin{proof} If $m=0$, the first map is the identity map and the second is the classical statement that isolated $L^2$ singularities are removable. The last equality also follows from this.
The cases $m\not=0$ follow easily from the definition and the $m=0$ cases. 
\end{proof}
\begin{corollary}\label{cor:L2sing}
 For
 every $m\in\mathbb Z$, the Hilbert space
 $A^2(U^*,\mu_m)=z^{-m}A^2(U)$.
\end{corollary}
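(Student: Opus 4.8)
The plan is to reduce the statement to the already–established case $m=0$ by multiplying with the nowhere–vanishing holomorphic function $z^m$ on $U^*$. First I would note that for every $m\in\bZ$ the operator $T_m\colon\mathcal O(U^*)\to\mathcal O(U^*)$, $f\mapsto z^mf$, is a linear bijection with inverse $T_{-m}$. Since $|z^mf(z)|^2=|f(z)|^2|z|^{2m}$ pointwise on $U^*$, for every $f\in\mathcal O(U^*)$ one has
$$\int_{U^*}|z^mf(z)|^2\,\frac{idz\wedge d\bar z}{2}=\int_{U^*}|f(z)|^2\mu_m(z)\,\frac{idz\wedge d\bar z}{2}.$$
Hence $T_m$ restricts to a bijective linear isometry $A^2(U^*,\mu_m)\to A^2(U^*,\mu_0)=A^2(U^*)$, and consequently $A^2(U^*,\mu_m)=T_{-m}\bigl(A^2(U^*)\bigr)=z^{-m}A^2(U^*)$ for all $m\in\bZ$.

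Next I would invoke removability of isolated $L^2$ singularities — exactly the $m=0$ instance recorded in Lemma \ref{lem:korlap} — which says that restriction gives a norm–preserving isomorphism $A^2(U)\xrightarrow{\sim}A^2(U^*)$: every square–integrable holomorphic function on $U^*$ extends uniquely across the puncture, with unchanged $L^2$ norm. Identifying a function on $U$ with its restriction to $U^*$, this reads $A^2(U^*)=A^2(U)$ as subspaces of $\mathcal O(U^*)$, and combining with the previous paragraph yields $A^2(U^*,\mu_m)=z^{-m}A^2(U)$. Finally, $A^2(U)$ is a Hilbert space by Remark \ref{rem:eltunos}, and $T_{-m}$ is an isometry, so $A^2(U^*,\mu_m)$ inherits a Hilbert space structure, which is all the corollary claims.

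I do not anticipate any genuine obstacle: the entire content sits in the $m=0$ removability statement, which we may assume, and the multiplication trick is purely formal. The one point worth spelling out is the inclusion $A^2(U^*,\mu_m)\subseteq z^{-m}A^2(U)$: an element $f$ a priori lies only in $\mathcal O(U^*)$, but the norm identity above shows $z^mf$ is square–integrable on $U^*$ for the unweighted measure, hence extends holomorphically to $U$, so $f$ itself has at worst a pole of order $m$ at $0$ (a vacuous constraint when $m\le 0$); dividing back recovers $f\in z^{-m}A^2(U)$. The reverse inclusion is immediate from the same identity. Alternatively, the cases $m<0$ and $m>0$ could be extracted separately from parts (1) and (2) of Lemma \ref{lem:korlap} together with the $m=0$ removability, but the uniform argument via $T_m$ is the cleanest route.
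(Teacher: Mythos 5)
Your argument is correct and uses exactly the ingredients the paper intends: multiplication by the nowhere-vanishing function $z^m$ on $U^*$ as a weight-shifting isometry, plus removability of isolated $L^2$ singularities (the $m=0$ content of Lemma~\ref{lem:korlap}) to identify $A^2(U^*)$ with $A^2(U)$. The only difference is presentational — you run a uniform argument in $m$ where the paper's lemma splits into the cases $m<0$ and $m>0$ — so this is essentially the paper's own route.
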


\subsection{Bergman spaces on Riemann surfaces}
In the following we generalize Remark \ref{rem:eltunos} about Bergman spaces with prescribed vanishings to the case of Riemann surfaces.
 \begin{definition}\label{def:simabergman}
    Let $M$ be a Riemann surface with a holomorphic vector bundle $E\to M$ equipped with a continuous Hermitian metric $h$. Let $\Sigma\subset M$ be a discrete set and $\rho$ a positive, continuous volume form on $M\setminus\Sigma$. The associated Bergman space is
    $$A^2(M,\rho,E,h) := \Big\{\, s \in H^0(M,E) \ \Big| \ \|s\|_{L^2}^2:= \int_{M\setminus \Sigma} h(s,s)\rho < \infty \, \Big\}$$
    where $H^0(M,E)$ denotes the vector space of holomorphic sections of $E$.
\end{definition}

\noindent The behavior of $\rho$ in the points of $\Sigma$ is specified in subsection \ref{ss:divbergman}. Until then $\Sigma$ will be assumed to be empty. In this case the Bergman space is known to be a Hilbert space (see for example \cite{PW94}, although there both the volume form and the metric is assumed to be smooth, the proof works in the continuous category as well).

\begin{definition}\label{definition:div} 
Let $M$ be a Riemann surface, $E\rightarrow M$ a holomorphic vector bundle of rank $r$ and $S$  a meromorphic section of $E$. Let $s_1, \dots, s_r$ be the coordinate functions of $S$ in a local trivialization of $E$ around $p \in M$ and set $\text{ord}_pS:=\min\{\,\text{ord}_ps_k, \,k=1,\dots r\,\}$. This is independent of the chosen trivialization. Define the divisor of $S$ as
$\text{div}(S):=\sum_{p\in M}(\text{ord}_pS)\cdot p$.   
\end{definition}

 \begin{remark}\label{rem:divtenzor}
 If we have two holomorphic vector bundles $E_i\to M$ $(i=1,2)$ with meromorphic sections $S_i$, then
 $
 \text{div}(S_1\otimes S_2)=\text{div}(S_1)+\text{div}(S_2).
 $
 \end{remark}
 \begin{definition}
 Let $M$ be a Riemann surface, $E\rightarrow M$ a holomorphic vector bundle  with continuous Hermitian metric $h$, $D$ an effective divisor  and $\omega$ a positive continuous volume form on $M$.
 Set 
 $$
A^2_D(M,\omega,E,h):=\{\,S\in A^2(M,\omega,E,h) \ \mid \ \text{\normalfont div}(S)\ge D\, \}
$$
 \end{definition}
\begin{theorem}\label{theorem:embedding}  The space $A^2_D(M,\omega,E,h)$
is a closed subspace in  $A^2(M,\omega,E,h)$, hence a Hilbert space.   
\end{theorem}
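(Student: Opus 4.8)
The plan is to show that $A^2_D(M,\omega,E,h)$ is closed in the Hilbert space $A^2(M,\omega,E,h)$; closedness in a Hilbert space then gives the result. The main point is that the condition $\text{div}(S)\ge D$ is a countable set of vanishing conditions of the form ``certain Taylor coefficients of $S$ at the finitely many points of $\operatorname{supp}(D)$ vanish", and that each such condition is the kernel of a continuous linear functional on $A^2(M,\omega,E,h)$. Since an arbitrary intersection of kernels of continuous linear functionals is closed, this suffices.

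First I would reduce to a local statement. Write $D=\sum_{j}d_j\cdot p_j$ with $d_j>0$. Around each $p_j$ choose a coordinate chart $z$ centered at $p_j$ together with a holomorphic trivialization of $E$, so that a holomorphic section $S$ is represented near $p_j$ by an $r$-tuple of holomorphic functions $(s_1,\dots,s_r)$ on a disk. The condition $\operatorname{ord}_{p_j}S\ge d_j$ is equivalent to $s_k^{(i)}(p_j)=0$ for all $k=1,\dots,r$ and all $i=0,\dots,d_j-1$. So it is enough to prove that for each such pair $(p_j,i)$ and each coordinate index $k$, the map $S\mapsto s_k^{(i)}(p_j)$ is a continuous linear functional on $A^2(M,\omega,E,h)$.

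The key step is this continuity of point-derivative evaluation, and it is where I expect the only real work to be. Fix a coordinate polydisk $\Delta$ around $p_j$ on which $E$ is trivialized and on which $\omega$ and $h$ are comparable (up to bounded positive constants) to the standard Euclidean volume form and the standard Hermitian metric, respectively — this is possible because $\omega$ is a positive continuous volume form and $h$ a continuous metric, hence both are bounded above and below on the relatively compact $\overline{\Delta'}$ for a slightly smaller disk $\Delta'$. Consequently, for $S\in A^2(M,\omega,E,h)$ the coordinate functions $s_k$ satisfy $\int_{\Delta'}|s_k|^2\,\tfrac{i\,dz\wedge d\bar z}{2}\le C\,\|S\|_{L^2}^2$, i.e. $s_k\in A^2(\Delta')$ with norm controlled by the global norm. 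Now invoke the classical fact, already cited in Remark~\ref{rem:eltunos} (cf.\ \cite{HKZ}), that on a bounded domain the evaluation functionals $f\mapsto f^{(i)}(p)$ are continuous on $A^2(\Delta')$: there is a constant $C_i$ with $|s_k^{(i)}(p_j)|\le C_i\|s_k\|_{A^2(\Delta')}\le C_i\sqrt{C}\,\|S\|_{L^2}$. This exhibits $S\mapsto s_k^{(i)}(p_j)$ as a continuous linear functional on $A^2(M,\omega,E,h)$.

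Finally I would assemble the pieces: $A^2_D(M,\omega,E,h)=\bigcap (\ker \ell)$, the intersection taken over the finitely many functionals $\ell\colon S\mapsto s_k^{(i)}(p_j)$ just constructed (one for each $p_j$, each $0\le i\le d_j-1$, each $1\le k\le r$). A finite — indeed even an arbitrary — intersection of kernels of continuous linear functionals is a closed subspace, so $A^2_D(M,\omega,E,h)$ is closed in $A^2(M,\omega,E,h)$, and being a closed subspace of a Hilbert space it is itself a Hilbert space. The one subtlety to be careful about is that the vanishing order $\operatorname{ord}_{p_j}S$ must be read off from a trivialization, so I would note (as in Definition~\ref{definition:div}) that it is independent of the chosen trivialization, and that switching trivializations only changes $(s_1,\dots,s_r)$ by multiplication with an invertible holomorphic matrix, which does not affect whether all the listed derivatives vanish.
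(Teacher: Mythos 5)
Your proposal is correct and follows essentially the same route as the paper: reduce to the points of $\operatorname{supp}(D)$, bound the local coordinate functions in $A^2$ of a coordinate disk by the global norm using continuity and positivity of $\omega$ and $h$, invoke the classical continuity of derivative evaluations on $A^2$ of a disk (as in \cite{HKZ}), and exhibit $A^2_D(M,\omega,E,h)$ as an intersection of kernels of continuous linear functionals. Your remark that the intersection may even be infinite also covers the case of a divisor with infinite support on a non-compact surface, so no gap remains.
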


\begin{proof}
For $D=0$, $A^2_D(M,\omega,E,h)=A^2(M,\omega,E,h)$, so we can assume that $D=\sum m_pp,$ with $m_p>0$. It is enough to prove the theorem when $D=m\cdot p$ for some $p\in M$, since
  $$
 A^2_D(M,\omega,E,h)=\bigcap_{p\, \in \, \text{supp}(D)}A^2_{mp}(M,\omega,E,h). 
  $$
  So let $D=mp$, with $m$ a positive integer. Let $r$ be the rank of $E$. Fix the following data: $V$  a small enough neighborhood of $p$ with
  a holomorphic frame $e_1,\dots,e_r$ of $E$ over $V$ and a local holomorphic coordinate system
  $\chi:(V,p)\rightarrow (\mathbb{B}(0,2\varepsilon),0)$.
  
  Let $S\in A^2(M,\omega,E,h)$ and write it as $\left.S\right|_V=\sum^r_1f_ke_k$,
  where $f_k\in\mathcal O(V)$.
  \begin{lemma} \label{lem:kiertfolyt} For
  every $k=1,\dots,r$ and $ j=0, 1, 2, \dots$, the  evaluation map
$$\text{\normalfont Ev}^j_{p,k}:A^2(M,\omega,E,h)\rightarrow \bC,
\qquad S\mapsto (f_k\circ\chi^{-1})^{(j)}(0)$$ is a continuous linear functional.
  \end{lemma}

\noindent Armed with this lemma the proof of Theorem \ref{theorem:embedding} easily follows since
$$
A^2_{mp}(M,\omega,L,h)=
\bigcap^r_{k=1}\bigcap^m_{j=0}\ker \text{Ev}^j_{p,k}.
$$
\end{proof}

  \begin{proof}[Proof of Lemma \ref{lem:kiertfolyt}]
      First we show that $f_k\circ\chi^{-1}\in A^2(\mathbb B(0,\epsilon))$.
      Let $g$ be the Hermitian metric on
      $\left. E\right|_V$ for which the frame $e_1,\dots,e_r$ is unitary, i.e. $g(e_j,e_l)=\delta_{jl}$. 
    Since $W:=\chi^{-1}(\mathbb B(0,\varepsilon))$ is a compact subset in $V$, there exists some constants 
     $0<c$, so that in $W$ we have $cg\le h$.  Then           
     \begin{equation}\label{E:lokbecsl}
     \begin{split}
\infty>\|S\|^2_{L^2}:=
\int_Mh(S,S)\omega
\ge\int_Wh(S,S)\omega\ge
c\int_Wg(S,S)\omega=
c\int_W\sum^r_{l=1}|f_l|^2\omega
 \\
\ge c\int_W|f_k|^2\omega
=c\int_{\mathbb B(0,\epsilon)}|f_k\circ\chi^{-1}|^2
(\chi^{-1})^*\omega=c
\int_{\mathbb B(0,\epsilon)}
|f_k\circ\chi^{-1}|^2 Fd\lambda,
\end{split}
     \end{equation}
     where $d\lambda$ is the area form on $\bC$ and the function $F$ is defined by the equation $(\chi^{-1})^*\omega=Fd\lambda$ on $\mathbb B(0,2\varepsilon)$. By its definition, $F$ is a smooth, positive  function. Thus there exist a  constant $0<c_1$ with $c_1\le \left.F\right|_{\mathbb B(0,\epsilon)}$. Hence from \eqref{E:lokbecsl} we get
 \begin{equation}\label{E:ball}     
  \int_{\mathbb B(0,\epsilon)}
|f_k\circ\chi^{-1}|^2d\lambda\le
\frac1{c_1}
\int_{\mathbb B(0,\epsilon)}
|f_k\circ\chi^{-1}|^2 Fd\lambda
\le \frac1{cc_1}\|S\|^2_{L^2}.
\end{equation}
Thus indeed $f_k\circ\chi^{-1}\in A^2(\mathbb B(0,\epsilon))$.
The point evaluation 
$$
\text{ev}^j_0:A^2(\mathbb B(0,\varepsilon))\rightarrow\bC
\qquad
\text{ev}^j_0(g):=g^{(j)}(0)
$$ is a continuous linear functional   (see for instance \cite{HKZ}).
Hence there exists a constant $0<K_j$, so that together with equation~\eqref{E:ball}
we obtain
\begin{align*}
    |\text{Ev}^j_{p,k}(S)|=& |(f_k\circ\chi^{-1})^{(j)}(0)|=\\
    =& |\text{ev}^j_0(f_k\circ\chi^{-1})|\le K_j\sqrt{\int_{\mathbb B(0,\epsilon)}
    |f_k\circ\chi^{-1}|^2 d\lambda}\le\frac{K_j}{\sqrt{cc_1}}\|S\|_{L^2}.
\end{align*}
\end{proof}

\subsection{Divisorial Bergman spaces}\label{ss:divbergman} In this subsection we introduce certain ``singular" volume forms associated to divisors on a Riemann surface. We show that the corresponding Bergman spaces are Hilbert spaces.

\begin{definition}\label{def:gyokosmetrika}
    Let $M$ be a Riemann surface and    $D = \sum_pm_p\cdot p \in \text{Div}(M)$ a divisor on $M$. A $D$-volume form $\rho_D$  on $M$ is defined as a positive continuous volume form on $M\setminus\text{supp}(D)$ with the following additional property. Near any point $p \in \text{supp}(D)$, $$\rho_D \sim c_p|z|^{2m_p}\frac{idz\wedge d\bar z}{2}$$ for some local coordinate system $z$ around $p$. Here $c_p$ is a positive constant and the symbol $\sim$ means that the two forms are asymptotically equal as $z \to 0$. Note that when $D=0$, a $D$-volume form is just a positive continuous volume form.
\end{definition}
\begin{remark}\label{rem:D-volume}
Let $\rho_D$ be a fixed $D$-volume form on the Riemann surface $M$ and $\eta:M\rightarrow\mathbb R$ a continuous, positive  function. Then $\eta\rho_D$ is also  a $D$-volume form and all $D$-volume forms can be obtained  this way. $D$-volume forms can be constructed as follows. 
 Let  $L_D\rightarrow M$ be the  holomorphic line bundle associated to $D$ and let $s_D$ be the corresponding meromorphic section with $\text{div} (s_D)=D$.
If $\kappa$ is a continuous Hermitian metric on $L_D$ 
 and $\omega$ is a positive continuous volume form on $M$ then $\rho_D:=\kappa(s_D,s_D)\omega$ is a $D$-volume form.
 
\end{remark}    

\begin{remark}\label{rem:rhoindep}
Suppose $M$ is a compact Riemann surface with a holomorphic vector bundle 
$E\rightarrow M$ equipped with a Hermitian metric $h$ and let $D \in \text{div}(M)$. Let $U \subseteq M$ be open. The Bergman space $A^2(U,\rho_D,E,h)$ is defined according to Definition \ref{def:simabergman} with $\Sigma = \text{supp}(D)$. It is independent of the metric $h$ and the choice of the $D$-volume form $\rho_D$ in light of Remark~\ref{rem:D-volume}. The  $L^2$ norms on this vector space arising from different Hermitian metrics and $D$-volume forms will all be equivalent. 
\end{remark}

\begin{setting}\label{S:input}
 Let $M$ be a Riemann surface, $\omega$ a positive continuous volume form on $M$,
 $(E,h)\rightarrow M$ an Hermitian holomorphic vector bundle with $h$ continuous and
 $D$ a divisor on $M$. Let
$D_+:=\sum_{m_p>0}m_pp$ and  
  $D_-:=\sum_{m_p<0}m_pp$.
    Then $D=D_++D_-$. Denote by  $L_{D_\pm}$ and  $L_D$ the associated holomorphic line bundles.  $L_D$
    is isomorphic to $L_{D_+}\otimes L_{D_-}$. Let $s_+:M\rightarrow L_{D_+}$ (resp. $s_-:M\rightarrow L_{D_-}$) be a holomorphic (resp. meromorphic) section with $\text{div}(s_\pm)=D_\pm$.
    Let $\kappa_+$ (resp. $\kappa_-$) be a continuous Hermitian  metric on $L_{D_+}$ (resp. on $L_{D_-}$).
    Then $\kappa=\kappa_+\otimes \kappa_-$ is a continuous Hermitian metric on $L_{D_+}\otimes L_{D_-}$.
     Let $K_\pm=\kappa_\pm(s_\pm,s_\pm)$ and
     $\rho_D:=\kappa(s_D,s_D)\omega$  a $D$-volume form.  
\end{setting}

\begin{theorem}\label{thm:Phiisom}
Using the definitions and notations of Setting \ref{S:input} the following hold:
 
 \begin{enumerate}
     \item 
  Suppose  $D\le0$. Then $s\in A^2(M,\rho_D,E,h)$ implies that $ 
 -D\le \text{\normalfont div}(s).$ Furthermore, the map
 $$
 \Phi_-:A^2(M,\rho_D,E,h)
 \rightarrow A^2(M,\omega,E\otimes L_D, h\otimes\kappa),\quad s\mapsto s\otimes s_D
 $$
 is a surjective linear isometry.

\item Suppose  $0\le D$. The 
 map
$$\Phi_+:A^2(M,\rho_D,E,h)
 \rightarrow A^2_D(M,\omega,E\otimes L_D, h\otimes\kappa),\quad s\mapsto s\otimes s_D
 $$
 is a surjective linear isometry.
 \item Finally, for a general divisor $D \in \text{\normalfont Div}(M)$, the map $$\Phi : A^2(M,\rho_D,E,h) \to A^2_{D_+}(M \setminus \text{\normalfont supp}(D_-), K_-\omega, E\otimes L_{D_+}, h \otimes \kappa_+ ), \quad s \mapsto s\otimes s_{D_+}\big|_{M \setminus \text{\normalfont supp}(D_-)}$$ is a surjective linear isometry.
 
 \end{enumerate}
\end{theorem}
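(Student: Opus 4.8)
The plan is to prove parts (1) and (2) by essentially the same short argument, and then obtain (3) by restricting to $M\setminus\text{supp}(D_-)$ and applying (2). Throughout I fix the isomorphism $L_D\cong L_{D_+}\otimes L_{D_-}$ so that $s_D=s_+\otimes s_-$; then $\kappa(s_D,s_D)=K_+K_-$ and hence $\rho_D=K_+K_-\omega$. The basic pointwise identity driving everything is that for a holomorphic section $s$ of $E$ one has, on $M\setminus\text{supp}(D)$, $(h\otimes\kappa)(s\otimes s_D,s\otimes s_D)\,\omega=h(s,s)\,\kappa(s_D,s_D)\,\omega=h(s,s)\,\rho_D$; integrating this gives $\|s\otimes s_D\|^2=\|s\|^2$ whenever the a priori meromorphic section $s\otimes s_D$ is actually defined, so in each case it remains only to (a) identify the divisor inequality that makes $s\otimes s_D$ (resp.\ $s\otimes s_+$) holomorphic, and (b) establish surjectivity, i.e.\ that dividing a section of the target by $s_D$ (resp.\ $s_+$) yields a section of the source Bergman space.

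The engine for (a) is the local fact, essentially Corollary~\ref{cor:L2sing}: if $g\in\mathcal O(\{0<|z|<\varepsilon\})$ and $\nu$ is a continuous volume form with $\nu\sim c|z|^{2m}\frac{idz\wedge d\bar z}{2}$ for some constant $c>0$, then $\int g\bar g\,\nu<\infty$ forces $g$ to extend holomorphically across $0$ with $\text{ord}_0 g\ge -m$ when $m<0$ (and imposes nothing beyond holomorphy when $m\ge 0$). To reduce this to Corollary~\ref{cor:L2sing} I would shrink $\varepsilon$ so that $\nu$ is trapped between two positive constant multiples of $|z|^{2m}\frac{idz\wedge d\bar z}{2}$, and compare $h$ with the Euclidean metric through a unitary frame exactly as in the proof of Lemma~\ref{lem:kiertfolyt}. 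For part (1), where $D\le 0$: applying this at every $p\in\text{supp}(D)$ to the components of $s$ in a holomorphic frame yields $\text{ord}_p s\ge -m_p$, i.e.\ $\text{div}(s)\ge -D$, so $s\otimes s_D$ is holomorphic and $\Phi_-$ is a well-defined isometry by the identity above; for surjectivity note that when $D\le 0$ the section $s_D^{-1}$ of $L_D^{-1}$ is holomorphic (its divisor is $-D\ge 0$), so given $t$ in the target the section $s:=t\otimes s_D^{-1}$ is a holomorphic section of $E$ with $s\otimes s_D=t$ and $\|s\|_{\rho_D}=\|t\|_\omega<\infty$. For part (2), where $D\ge 0$: now $s\otimes s_D$ is automatically holomorphic and $\text{div}(s\otimes s_D)=\text{div}(s)+D\ge D$, so $\Phi_+$ lands in $A^2_D$ and is an isometry; conversely if $t\in A^2_D$ then $\text{div}(t)\ge D$ forces $s:=t\otimes s_D^{-1}$ to have $\text{div}(s)=\text{div}(t)-D\ge 0$, hence $s$ is holomorphic, lies in the source by the isometry identity, and satisfies $\Phi_+(s)=t$.

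For part (3), set $M':=M\setminus\text{supp}(D_-)$. First I would check that the restriction map $A^2(M,\rho_D,E,h)\to A^2(M',\rho_D|_{M'},E|_{M'},h|_{M'})$ is a bijective linear isometry: it is an isometry because the deleted set has measure zero, it is injective because $M'$ is dense, and it is surjective because at each $p\in\text{supp}(D_-)$ one has $m_p<0$, so the local fact above shows a holomorphic $L^2$-section over $M'$ extends holomorphically across $p$. Next observe that on $M'$ the section $s_-$ is holomorphic and nowhere vanishing and $\kappa_-$ is continuous and positive, so $K_-\omega$ is a positive continuous volume form on $M'$, and $\rho_D|_{M'}=K_+\cdot(K_-\omega)$ is a $D_+$-volume form on $M'$ with reference form $K_-\omega$. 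Applying part (2) on $M'$ to the data $(K_-\omega,\,E|_{M'},\,h|_{M'},\,D_+,\,s_+,\,\kappa_+)$ shows that $s\mapsto s\otimes s_+$ is a surjective linear isometry from $A^2(M',\rho_D|_{M'},E|_{M'},h|_{M'})$ onto $A^2_{D_+}(M',K_-\omega,E\otimes L_{D_+},h\otimes\kappa_+)$, and composing it with the restriction isomorphism gives precisely $\Phi$.

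I expect the only genuinely delicate point to be the reconciliation in part (3) — that deleting $\text{supp}(D_-)$ does not change the Bergman space — which relies entirely on the poles of $\rho_D$ forcing $L^2$-removability; everything else is bookkeeping with divisors and the pointwise norm identity. A secondary point needing some care is making the local $L^2$-versus-vanishing-order comparison uniform enough to accommodate the merely asymptotic (not exact) form of $\rho_D$ near $\text{supp}(D)$, which is why one shrinks the coordinate disks before invoking Corollary~\ref{cor:L2sing}.
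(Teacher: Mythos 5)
Your proposal is correct and follows essentially the same route as the paper: the same pointwise isometry identity $h(s,s)\rho_D=(h\otimes\kappa)(s\otimes s_D,s\otimes s_D)\,\omega$, the same local $L^2$-removability/vanishing-order facts near $\text{supp}(D)$, and the same reduction of part (3) to part (2) by first showing restriction to $M\setminus\text{supp}(D_-)$ is an onto isometry. The only cosmetic difference is in surjectivity of $\Phi_\pm$: you tensor with the dual meromorphic section $s_D^{-1}$ of $L_D^{-1}$, while the paper trivializes $L_D$ off $\text{supp}(D)$ and verifies by a local frame computation (Proposition \ref{prop:smerom}) that the quotient section is meromorphic — the divisor bookkeeping is identical either way.
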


\begin{remark} Note that the space $ A^2_{D_+}(M \setminus \text{\normalfont supp}(D_-), K_-\omega, E\otimes L_{D_+}, h \otimes \kappa_+ )$ agrees with $ A^2_{D_+}(M, K_-\omega, E\otimes L_{D_+}, h \otimes \kappa_+ )$. In what follows however we will use Theorem \ref{theorem:embedding} which formally only applies to the former of the two spaces. \end{remark}

\begin{corollary}\label{cor:simagyokos}
    Let $M, (E,h)$, and $D$  be as in Setting~\ref{S:input}. Suppose in addition that $\rho$ is an arbitrary $D$-volume form. Then  $A^2(M,\rho,E,h)$ is a Hilbert space.
\end{corollary}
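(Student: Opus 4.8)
The plan is to deduce the statement from Theorem~\ref{thm:Phiisom}, Theorem~\ref{theorem:embedding}, and the fact recalled after Definition~\ref{def:simabergman} that $A^2(M',\omega',E',h')$ is a Hilbert space whenever $\omega'$ is a positive continuous volume form with empty singular set. The first step is to bring $\rho$ into the normal form of Setting~\ref{S:input}. By Remark~\ref{rem:D-volume} any $D$-volume form $\rho$ can be written as $\rho = \eta\,\rho_D$ with $\eta : M \to \mathbb{R}$ continuous and positive and $\rho_D = \kappa(s_D,s_D)\,\omega$ as in Setting~\ref{S:input}; hence $\rho = \kappa(s_D,s_D)\,\omega'$ with $\omega' := \eta\omega$, again a positive continuous volume form on $M$. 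After replacing $\omega$ by $\omega'$ we may therefore assume that $\rho$ is literally the $D$-volume form furnished by Setting~\ref{S:input}, so that Theorem~\ref{thm:Phiisom} applies to it verbatim.

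Next I would invoke Theorem~\ref{thm:Phiisom}(3): the map $s \mapsto s \otimes s_{D_+}\big|_{M \setminus \text{supp}(D_-)}$ is a surjective linear isometry from $A^2(M,\rho,E,h)$ onto $A^2_{D_+}\big(M', K_-\omega', E\otimes L_{D_+}, h\otimes\kappa_+\big)$, where $M' := M \setminus \text{supp}(D_-)$; this covers all cases at once, with the conventions $D_+ = 0$ (resp. $D_- = 0$) when $D \le 0$ (resp. $D \ge 0$). Since completeness is preserved under a surjective linear isometry, it suffices to show that the target space is a Hilbert space.

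Finally I would check the remaining hypotheses: $M'$ is a Riemann surface, being the complement in $M$ of the discrete (hence closed) set $\text{supp}(D_-)$; the function $K_- = \kappa_-(s_-,s_-)$ is continuous and strictly positive on $M'$ since $s_-$ is holomorphic and nowhere vanishing there, so $K_-\omega'$ is a positive continuous volume form on $M'$; and $D_+$ is effective with support contained in $M'$. Then $A^2\big(M', K_-\omega', E\otimes L_{D_+}, h\otimes\kappa_+\big)$ is a Hilbert space by the result cited after Definition~\ref{def:simabergman}, and by Theorem~\ref{theorem:embedding} its closed subspace $A^2_{D_+}$ of sections $s$ with $\text{div}(s) \ge D_+$ is again a Hilbert space, which proves the corollary. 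I expect the only (minor) obstacle to be purely a matter of bookkeeping: ensuring Theorem~\ref{theorem:embedding} is applied on $M'$, where $K_-\omega'$ is a genuine positive continuous volume form, rather than on $M$, in accordance with the remark following Theorem~\ref{thm:Phiisom} — and this is harmless.
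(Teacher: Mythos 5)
Your proposal is correct and follows essentially the same route as the paper: reduce to the normal form of Setting~\ref{S:input} via Remark~\ref{rem:D-volume} (your absorption of $\eta$ into $\omega$ is exactly the intended reduction, spelled out a bit more carefully), then transfer completeness through the surjective isometry of Theorem~\ref{thm:Phiisom}(3) onto $A^2_{D_+}(M\setminus\text{supp}(D_-),K_-\omega',E\otimes L_{D_+},h\otimes\kappa_+)$, which is a Hilbert space by the result cited after Definition~\ref{def:simabergman} together with Theorem~\ref{theorem:embedding}. No gaps; this matches the paper's argument.
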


\begin{proof}
    By Remark \ref{rem:D-volume}, we may assume $\rho$ to be the $D$-volume form $\rho_D$ defined in Setting \ref{S:input}. Furthermore, let $\omega, D_{\pm}, K_{\pm}$ and $\kappa_{\pm}$ also be as in Setting \ref{S:input}.
    
    Since $K_-\omega$ is a positive continuous volume form on $M\setminus\text{supp}(D_-)$, 
    the Bergman space
 $A^2(M\setminus\text{supp}(D_-),K_-\omega,E\otimes L_{D_+}, h\otimes \kappa_+)$ with the $L^2$ metric is a Hilbert space (cf. \cite{PW94}). $D_+$
 is an effective divisor on $M\setminus\text{supp}(D_-)$ as well.  Then  Theorem~\ref{theorem:embedding}, 
 yields that 
 $A^2_{D_+}(M\setminus\text{supp}(D_-), K_-\omega,E\otimes L_{D_+}, h\otimes \kappa_+)$ is also a Hilbert space and by part 3 of Theorem~\ref{thm:Phiisom} 
 this latter Hilbert space is isometric to $A^2(M,\rho_D,E,h)$ and we are done.
\end{proof}

\begin{proof}[Proof of Theorem \ref{thm:Phiisom}] First we show that both maps $\Phi_{\pm}$ are linear isometries and that they map into the appropriate Bergman space.
 Let $s$ be a holomorphic section of $E$. Then  
\begin{equation}\label{eq:isometry}
\begin{split}
\|s\|^2_{L^2(\rho_D)}=\int_Mh(s,s)\rho_D=
\int_M h(s,s)\kappa(s_D,s_D)\omega=\\  =\int_M h\otimes \kappa(s\otimes s_D,s\otimes s_D)\omega=
\|s\otimes s_D\|^2_{L^2(\omega)}.\hspace{10mm}
\end{split}   
\end{equation} 
Hence
\begin{equation}\label{eq:equiv}
\|s\|^2_{L^2(\rho_D)}<\infty\Longleftrightarrow
\|s\otimes s_D\|^2_{L^2(\omega)}<\infty
\end{equation}
Suppose now that $D\le 0$.
 Since isolated $L^2$ singularities of holomorphic functions are removable, \eqref{eq:equiv} yields  that if $s\in  A^2(M,\rho_D,E,h)$, the section $s\otimes s_D$ extends holomorphically to the points of $\text{supp}(D)$ and so
 \begin{equation}\label{E:D<0}
 0\le \text{div}(\Phi_-(s))= \text{div}(s\otimes s_D)=\text{div}(s)+D\Longrightarrow
 -D\le \text{div}(s).
\end{equation}
 Equation~\eqref{eq:isometry} implies that the map $\Phi_-$   is a  linear isometry into the Bergman space $A^2(M,\omega,E\otimes L_D, h\otimes\kappa)$.

 If $0\le D$, the section $s_D$ is holomorphic
 and $s\otimes s_D$ is as well.
 Furthermore 
 $$\text{div}(\Phi_+(s))=\text{div}(s\otimes s_D)=\text{div} (s)+D\ge D,$$
 since $s$ is holomorphic. This together with
 equation~\eqref{eq:isometry}
yields that $\Phi_+$ indeed maps into  $A^2_D(M,\omega,L\otimes L_D, h\otimes\kappa)$ and that it is a linear isometry.
What is left  to show is that both maps $\Phi_-$ and $\Phi_+$ are surjective. 

Let  $S\in H^0(M,E\otimes L_D)$. Since the bundle $L_D$ is trivial over $M\setminus{\text{supp}(D)}$, there exists a holomorphic section $s$ of $\left.E\right|_{M\setminus{\text{supp}(D)}}$ with
$s\otimes\left.s_D\right|_{M\setminus{\text{supp}(D)}}=\left.S\right|_{M\setminus{\text{supp}(D)}}$.
\begin{prop} \label{prop:smerom}
    $s$ is a meromorphic section of $E\rightarrow M$.
\end{prop}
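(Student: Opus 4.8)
The plan is to reduce the claim to a purely local statement around the points of $\text{supp}(D)$. By construction $s$ is already a holomorphic section of $E$ over the open set $M\setminus\text{supp}(D)$, and $\text{supp}(D)$ is discrete; since being a meromorphic section is a local property, it suffices to show that $s$ extends to a meromorphic section of $E$ on some neighborhood of each point $p\in\text{supp}(D)$.

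So I would fix $p\in\text{supp}(D)$ with multiplicity $m=m_p$ and choose a connected neighborhood $V$ of $p$ small enough that (i) $V\cap\text{supp}(D)=\{p\}$, (ii) $E|_V$ admits a holomorphic frame $e_1,\dots,e_r$, (iii) $L_D|_V$ admits a holomorphic frame $\varepsilon$, and (iv) $V$ carries a holomorphic coordinate $z$ centered at $p$. Then $\{e_k\otimes\varepsilon\}_{k=1}^r$ is a holomorphic frame of $(E\otimes L_D)|_V$, so $S|_V=\sum_k S_k\,(e_k\otimes\varepsilon)$ with $S_k\in\mathcal{O}(V)$. Writing $s_D|_V=g\,\varepsilon$, the function $g$ is meromorphic on $V$, holomorphic and zero-free on $V\setminus\{p\}$, with $\text{ord}_p g=\text{ord}_p s_D=m$; after shrinking $V$ I may therefore write $g(z)=z^m u(z)$ with $u\in\mathcal{O}(V)$ nowhere vanishing. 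Expanding $s|_{V\setminus\{p\}}=\sum_k f_k e_k$ with $f_k\in\mathcal{O}(V\setminus\{p\})$, the defining identity $s\otimes s_D=S$ on $V\setminus\{p\}$ reads $\sum_k f_k\,g\,(e_k\otimes\varepsilon)=\sum_k S_k\,(e_k\otimes\varepsilon)$, i.e. $f_k=S_k/g=z^{-m}u^{-1}S_k$ on $V\setminus\{p\}$. Each right-hand side is a meromorphic function on all of $V$, so the $f_k$ — and hence $s$ — extend meromorphically across $p$; gluing these local extensions with the given holomorphic section on $M\setminus\text{supp}(D)$ produces the desired global meromorphic section of $E$.

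I do not expect a genuine obstacle here: the only point needing care is to shrink $V$ enough that the four normalizations above hold simultaneously and that $u=g z^{-m}$ is an honest holomorphic unit on $V$ rather than merely zero-free off $p$, both of which are arranged just by taking $V$ smaller. As a by-product of $f_k=z^{-m}u^{-1}S_k$ I would also record the divisor identity $\text{div}(s)=\text{div}(S)-D$, equivalently $\text{div}(s\otimes s_D)=\text{div}(S)$; this is precisely what yields the surjectivity of $\Phi_\pm$ afterwards, since when $D\le 0$ it gives $\text{div}(s)\ge -D\ge 0$ from $\text{div}(S)\ge 0$, and when $0\le D$ it gives $\text{div}(s)\ge 0$ from $\text{div}(S)\ge D$, so in both cases $s$ is a holomorphic section of $E$ lying in $A^2(M,\rho_D,E,h)$ by the isometry \eqref{eq:isometry}.
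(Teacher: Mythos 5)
Your argument is correct and is essentially the paper's own proof: you trivialize $E$ and $L_D$ near each point of $\mathrm{supp}(D)$, write $S=\sum_k S_k(e_k\otimes\varepsilon)$ and $s_D=g\,\varepsilon$ with $g$ meromorphic of order $m_p$, and conclude $f_k=S_k/g$ is meromorphic, just as the paper does (your explicit factorization $g=z^{m}u$ is an unneeded but harmless refinement). The divisor identity you record at the end is likewise exactly the paper's equation \eqref{eq:div} and its use in the surjectivity of $\Phi_\pm$.
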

\begin{proof}
 Let $p\in \text{supp}(D)$. Choose a small open neighborhood $U_p$ of $p$ and let $e_1,\dots,e_r$ and $e$ be holomorphic frames of $E$ and $L_D$ respectively. Then $\left.S\right|_{U_p}=\sum g_je_j\otimes e$ for some $g_j\in\mathcal O(U_p)$ and $\left.s_D\right|_{U_p}=he$ where $h$ is a meromorphic function on $U_p$. On $U^*_p:=U_p\setminus\{p\}$, $s=\sum f_je_j$ for some $f_j\in\mathcal O(U^*_p)$.
 Therefore
 $$
\sum_j g_j\left.(e_j\otimes e)\right|_{U^*_p}= \Big(\sum_j f_je_j\Big)\otimes\left.(he)\right|_{U^*_p}=
\sum_j (hf_j)e_j\otimes \left.e\right|_{U^*_p}
 $$
 and hence $f_j=g_j/h$ is meromorphic
 in $U_p$ for all $j=1,\dots, r$.
\end{proof}
\noindent It follows from Proposition \ref{prop:smerom} that
\begin{equation}\label{eq:div}
 0\le\text{div}(S)=\text{div}(s)+\text{div}(s_D)=\text{div}(s)+D.   
\end{equation}
Now, let $D<0$ and $S\in A^2(M,\omega, E\otimes L_D,h\otimes\kappa)$. Then 
\eqref{eq:div} shows that $s$ is holomorphic and \eqref{eq:isometry}  yields that $s\in A^2(M,\rho_D,E,h)$ 
 proving the surjectivity of $\Phi_-$.

Now, suppose that $D\geq0$ and 
$S\in A^2_D(M,\omega,L\otimes L_D, h\otimes\kappa)$.
Then \eqref{eq:div} shows again that $s$ is a holomorphic section and \eqref{eq:isometry} that
 $s\in A^2(M,\rho_D,E,h)$ 
 proving the surjectivity of $\Phi_+$.

 Finally, let $D \in \text{Div}(M)$ be an arbitrary divisor.  Isolated $L^2$ singularities of holomorphic functions are removable, hence the restriction map
 $$
 \iota: H^0(M,E)\rightarrow H^0(M\setminus\text{supp} (D_-),E),\quad s\mapsto 
 \left.s\right|_{M\setminus\text{supp}(D_-)}
 $$
 induces an onto isometry
 $$
\iota^*:A^2(M,\rho,E,h)=A^2(M,K_+K_-\omega,E,h)\rightarrow A^2(M\setminus\text{supp}(D_-), K_+K_-\omega, E,h)$$
by Lemma \ref{lem:korlap}. Hence, applying part (2) to the effective divisor $D_+$ shows that $\Phi := \Phi_+ \circ \iota^*$ is a surjective linear isometry as well.
\end{proof}

\subsection{The main theorem} We prove the main theorem of this section, which is a direct generalization of \cite{Sz22} and \cite{GGV22} to the case of Bergman spaces associated to $D$-volume forms.

\begin{prop}\label{prop:merom-az-L2}
 Let $M$ be a compact Riemann surface, $(E,h)\rightarrow M$ a Hermitian holomorphic vector bundle with $h$ continuous, $D$ a divisor on $M$, $\rho$ a $D$-volume form and $U\subset M$ open. Define\begin{equation}\label{E:DUdivizor}   
     D_U := \sum_{p \, \in \, \partial U\cap\,\text{\normalfont supp}(D_+)}m_p\cdot p + \sum_{p\, \in \, \overline{U}\cap\, \text{\normalfont supp}(D_-)}m_p\cdot p.
      \end{equation}
We let $H^0(M,E\otimes L_{D_U}):=\{s\in \mathcal M(M,E)\mid \text{\rm div}(s)+D_{U}\ge 0\}$ where $\mathcal{M}(M,E)$ denotes the space of meromorphic sections of $E$. Then
$$H^0(M,E\otimes L_{D_U}) \subseteq A^2(U,\rho,E,h).$$ 
\end{prop}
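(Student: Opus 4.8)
The plan is to take a meromorphic section $s$ of $E$ with $\operatorname{div}(s) + D_U \ge 0$ and estimate $\|s\|_{L^2(U,\rho)}^2 = \int_{U \setminus \operatorname{supp}(D)} h(s,s)\rho$ by splitting the integral into a neighborhood of the relevant points of $\operatorname{supp}(D)$ and the remaining ``bulk'' part of $U$. The bulk part is unproblematic: away from $\operatorname{supp}(D)$, $\rho$ is a positive continuous volume form and $h(s,s)$ is bounded on any compact set avoiding the poles of $s$ — but every pole of $s$ lies in $\operatorname{supp}(D_+) \cap \partial U$ or $\operatorname{supp}(D_-) \cap \overline{U}$ by the constraint $\operatorname{div}(s) + D_U \ge 0$, so after removing small coordinate disks around the finitely many points of $\operatorname{supp}(D)$ in $\overline{U}$, what is left is a relatively compact subset of $U$ on which $s$ is holomorphic, hence the integral there is finite. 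The whole content is therefore local near the points $p \in \operatorname{supp}(D) \cap \overline{U}$.

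So fix such a $p$ with multiplicity $m_p$, a local coordinate $z$ centered at $p$ with $\rho \sim c_p |z|^{2m_p}\, \tfrac{i}{2} dz \wedge d\bar z$, and a local holomorphic frame of $E$ so that $h(s,s) \sim |z|^{2k}$ where $k = \operatorname{ord}_p s$ (more precisely, $h(s,s)$ is comparable to $|z|^{2k}$ times a positive continuous function near $0$ once $z^{-k}s$ is a nonvanishing holomorphic frame coefficient). The local integrand is then comparable to $|z|^{2(k+m_p)}\, \tfrac{i}{2} dz\wedge d\bar z$, which is locally integrable on a punctured disk precisely when $2(k+m_p) > -2$, i.e. $k + m_p \ge 0$. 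Now one checks this holds in each of the two cases built into the definition of $D_U$. If $p \in \operatorname{supp}(D_+)$ (so $m_p > 0$): if $p \in \partial U$ then $D_U$ contributes $m_p\cdot p$, so $\operatorname{div}(s) + D_U \ge 0$ gives $k \ge -m_p$, i.e. $k + m_p \ge 0$; if instead $p \in U$ then $D_U$ contributes nothing at $p$, but then $s$ is holomorphic at the interior point $p$ so $k \ge 0$ and a fortiori $k + m_p \ge 0$. If $p \in \operatorname{supp}(D_-)$ (so $m_p < 0$): $p \in \overline{U}$ automatically since we are only integrating over $\overline{U}$, $D_U$ contributes $m_p \cdot p$, and again $\operatorname{div}(s)+D_U\ge 0$ gives $k + m_p \ge 0$. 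In every case $k + m_p \ge 0$, so the local integral near $p$ is finite.

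Summing the finitely many finite local contributions and the finite bulk contribution gives $\|s\|_{L^2(U,\rho)}^2 < \infty$, hence $s \in A^2(U,\rho,E,h)$, which is the desired inclusion. I expect the only slightly delicate point to be the bookkeeping around a point $p \in \operatorname{supp}(D_+) \cap \partial U$: there $s$ may genuinely have a pole of order up to $m_p$, and one must be careful that the integration is over $U$ (not all of $M$), so that near such a boundary point only a portion of the punctured disk lies in $U$ — but since the local integrand $|z|^{2(k+m_p)}\,\tfrac{i}{2}dz\wedge d\bar z$ is integrable on the full punctured disk whenever $k + m_p \ge 0$, restricting to $U \cap \{0 < |z| < \varepsilon\}$ only makes the integral smaller, so this causes no trouble. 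The one genuinely necessary observation is that a meromorphic section with $\operatorname{div}(s) + D_U \ge 0$ can only have poles at points of $\operatorname{supp}(D_U) \subseteq \overline{U}$, so it defines an honest holomorphic — in particular $L^2_{\mathrm{loc}}$ — section on $U$ minus those finitely many points, and the $L^2$ condition is then purely the local integrability check above.
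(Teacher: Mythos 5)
Your proof is correct and is essentially an expanded version of the paper's own one-line argument: the paper simply cites Corollary \ref{cor:L2sing}, which encodes precisely your local integrability check of $|z|^{2(\operatorname{ord}_p s + m_p)}$ near each point of $\operatorname{supp}(D)\cap\overline{U}$, combined with the observation that $\operatorname{div}(s)+D_U\ge 0$ forces $s$ to be holomorphic on $U$. One cosmetic nit: the set remaining after you remove the small disks need not be relatively compact \emph{in} $U$; what you actually use is that its closure in the compact surface $M$ avoids $\operatorname{supp}(D)$ and the poles of $s$, so the integrand extends continuously there and is bounded.
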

\begin{proof}
    This follows from the definitions and Corollary~\ref{cor:L2sing}
\end{proof}

\noindent The main theorem (Theorem \ref{thm:gyokos} below) essentially states that the above inclusion is either an equality or has infinite codimension.

\begin{theorem} \label{thm:gyokos}
 Let $M$ be a compact Riemann surface together with a divisor $D$, a $D$-volume form $\rho$, a holomorphic vector bundle $E\ra M$ and a continuous Hermitian metric $h$ on $E$. Then the following are equivalent for any open subset $U \subseteq M$.
    \begin{itemize}
        \item[(a)] $\dim_\bC A^2(U,\rho, E, h) < \infty$.
        \item[(b)] The set $M\setminus U$ is locally polar.
        \item[(c)] $A^2(U,\rho, E, h) = H^0(M,E\otimes L_{D_U})$,\quad where $D_U$ is defined in \eqref{E:DUdivizor}.
    \end{itemize}
\end{theorem}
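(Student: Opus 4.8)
The plan is to reduce Theorem~\ref{thm:gyokos} to the case $D=0$, which is the theorem of \cite{Sz22, GGV22} for $M=\mathbb{P}^1$ and \cite{GGV24} in general, by means of the isometries established in Theorem~\ref{thm:Phiisom}. The chain of implications to prove is $(c)\Rightarrow(a)\Rightarrow(b)\Rightarrow(c)$. The implication $(c)\Rightarrow(a)$ is immediate: $H^0(M,E\otimes L_{D_U})$ is the space of global meromorphic sections of a fixed coherent sheaf on a compact Riemann surface, hence finite-dimensional by the usual finiteness theorem (or Riemann--Roch). So the substance is in $(a)\Rightarrow(b)$ and $(b)\Rightarrow(c)$.

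\medskip\noindent\textbf{Reduction to $D=0$ on an open subset.} First I would fix $U\subseteq M$ and localize the divisor. Writing $D=D_++D_-$ as in Setting~\ref{S:input}, observe that only the part of $\mathrm{supp}(D)$ meeting $\overline U$ (for the negative part) or $\partial U$ (for the positive part) affects $A^2(U,\rho,E,h)$, since at interior points of $U$ the $D_+$-vanishing is not forced (isolated $L^2$ singularities are removable) while at points of $\mathrm{supp}(D_+)\cap U$ the weight $|z|^{2m_p}$ with $m_p>0$ forces a vanishing that is automatic for holomorphic sections; this is exactly the content of the definition of $D_U$ in \eqref{E:DUdivizor} and of Proposition~\ref{prop:merom-az-L2}. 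Concretely, applying part (3) of Theorem~\ref{thm:Phiisom} over $U$ (rather than over all of $M$) together with Lemma~\ref{lem:korlap}, I would produce a surjective linear isometry
$$A^2(U,\rho,E,h)\ \xrightarrow{\ \sim\ }\ A^2\!\big(U\setminus(\mathrm{supp}(D_-)\cap U),\ \widetilde\omega,\ E\otimes L_{D_U^+},\ \widetilde h\,\big)_{D_U^+}$$
or, more cleanly, an isometry onto the space of holomorphic sections of a new Hermitian bundle $E':=E\otimes L_{D_U}$ on $M$ that are $L^2$ with respect to an honest (non-singular, positive continuous) volume form $\omega'$ on $U$. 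Here the point is that $D_U$ records precisely: at $p\in\partial U\cap\mathrm{supp}(D_+)$ the weight does \emph{not} force a vanishing (the boundary is not integrated), at $p\in U\cap\mathrm{supp}(D_+)$ the forced vanishing is absorbed into $L_{D_U}$ being trivial there, at $p\in\overline U\cap\mathrm{supp}(D_-)$ the weight allows a pole absorbed into $L_{D_U}$, and at $p\in\partial U\cap\mathrm{supp}(D_-)$ likewise. After this reduction, $A^2(U,\rho,E,h)\cong A^2(U,\omega',E',h')$ with $\omega'$ a positive continuous volume form on $M$ (so the $D=0$ case applies), and under this isometry $H^0(M,E\otimes L_{D_U})$ corresponds to $H^0(M,E')$.

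\medskip\noindent\textbf{Invoking the $D=0$ dichotomy and transporting it back.} Now apply the known case $D=0$ of the theorem (\cite{GGV24}, or \cite{Sz22,GGV22} when $M=\mathbb{P}^1$) to $(M,\omega',E',h')$ and the open set $U$: it gives the equivalence of $\dim_\bC A^2(U,\omega',E',h')<\infty$, of $M\setminus U$ being locally polar, and of $A^2(U,\omega',E',h')=H^0(M,E')$. Transporting through the isometry of the previous paragraph yields exactly $(a)\Leftrightarrow(b)\Leftrightarrow(c)$ for $(M,D,\rho,E,h)$, since the isometry identifies $A^2(U,\rho,E,h)$ with $A^2(U,\omega',E',h')$, its dimension with the former's, and $H^0(M,E\otimes L_{D_U})$ with $H^0(M,E')$; the set $M\setminus U$ is unchanged. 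One must check that local polarity is a property of $M\setminus U$ alone (it is, by Definition~\ref{def:locpol}) so it is literally the same condition on both sides.

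\medskip\noindent\textbf{Main obstacle.} The delicate point is the isometry in the reduction step: Theorem~\ref{thm:Phiisom} is stated over the \emph{compact} $M$, whereas here I need its analogue over the \emph{open} $U$, with $D$ replaced by $D_U$, and I must verify that tensoring by $s_{D_U}$ lands sections in the right $L^2$ space and conversely. The subtlety is bookkeeping at the four types of boundary/interior points listed above — in particular making sure that at $p\in\partial U\cap\mathrm{supp}(D_-)$ a holomorphic section of $E'$ with the allowed pole order is genuinely $L^2$ near $p$ inside $U$ (this uses $\int_{|z|<\varepsilon}|z|^{2m}\,|z|^{-2\,\mathrm{ord}}\,d\lambda<\infty$ with the exponent bookkeeping coming out right, exactly Corollary~\ref{cor:L2sing}), and that at $p\in U\cap\mathrm{supp}(D_+)$ the weight $|z|^{2m_p}$, $m_p>0$, forces $L^2$ sections to vanish to order $\ge m_p$ there — which for holomorphic sections is vacuous and matches $L_{D_U}$ being trivial at such $p$. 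Once this local analysis is pinned down, everything else is formal: finite-dimensionality of $H^0$ of a line bundle twist on a compact Riemann surface, and the black-boxed $D=0$ theorem.
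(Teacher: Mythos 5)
Your plan --- twist by $L_{D_U}$ once and for all and quote the $D=0$ theorem of \cite{GGV24} --- is a genuinely different organization from the paper's proof, which is asymmetric: it proves $(a)\Rightarrow(b)$ by dominating $\rho$ by a modified $D_-$-volume form $\tilde\rho$, applying Theorem~\ref{thm:Phiisom}(1) and citing \cite{GGV22}, and proves $(b)\Rightarrow(c)$ directly, by extending sections across the polar boundary away from $\text{supp}(D)$ (Proposition 2.1 of \cite{Sz22}) and then reading off the pole/vanishing orders at $\partial U\cap\text{supp}(D_+)$ and $\overline U\cap\text{supp}(D_-)$ via Corollary~\ref{cor:L2sing}, with Proposition~\ref{prop:merom-az-L2} supplying the reverse inclusion. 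However, your key reduction step is flawed as written. First, the map $s\mapsto s\otimes s_{D_U}$ cannot be an \emph{isometry} onto $A^2(U,\omega',E\otimes L_{D_U},h\otimes\kappa')$ with $\omega'$ a positive continuous volume form on $M$ whenever $U\cap\text{supp}(D_+)\neq\emptyset$: at such an interior point $p$ the divisor $D_U$ has coefficient $0$, so $\kappa'(s_{D_U},s_{D_U})\,\omega'$ is comparable to an honest volume form near $p$, while $\rho\sim c_p|z|^{2m_p}\,\tfrac{i}{2}dz\wedge d\bar z$ vanishes there; no choice of $\kappa',\omega'$ matches the two weights. Second, your local analysis at exactly these points is wrong: a weight $|z|^{2m_p}$ with $m_p>0$ makes the $L^2$ condition \emph{weaker}, it forces no vanishing at all, and ``vanishing to order $\geq m_p$'' would certainly not be vacuous for holomorphic sections. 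Third, Theorem~\ref{thm:Phiisom}(3) applied over $U$ does not produce your displayed map: $\rho|_U$ is a $(D|_U)$-volume form, not a $D_U$-volume form (the two differ precisely on $U\cap\text{supp}(D_+)$), so what that theorem yields is an isometry onto $A^2_{D_+|_U}\bigl(U\setminus\text{supp}(D_-),K_-\omega,E\otimes L_{D_+},h\otimes\kappa_+\bigr)$, which still carries prescribed vanishing at interior points of $\text{supp}(D_+)$ and a residual singular weight, not the clean $D=0$ Bergman space you need.

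The strategy is salvageable: $s\mapsto s\otimes s_{D_U}|_U$ \emph{is} a linear bijection (with equivalent topologies, though not an isometry) from $A^2(U,\rho,E,h)$ onto $A^2(U,\omega',E\otimes L_{D_U},h\otimes\kappa')$. Indeed, at interior points of $\text{supp}(D_+)$ membership is automatic on both sides (both weights are locally finite there and the sections are holomorphic), at points of $\overline U\cap\text{supp}(D_-)$ and $\partial U\cap\text{supp}(D_+)$ the factor $\kappa'(s_{D_U},s_{D_U})$ reproduces the asymptotics of $\rho$ up to bounded positive factors (Corollary~\ref{cor:L2sing} giving the order bookkeeping), and division by $s_{D_U}$ creates no poles on $U$ since $s_{D_U}$ vanishes only on $\partial U$ and its poles on $U$ are matched by the vanishing forced by the $\rho$-weight. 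A bijection suffices for (a), and after checking that restriction to $U$ intertwines the two copies of $H^0(M,E\otimes L_{D_U})$ it suffices for (c) as well, so with these corrections your reduction to \cite{GGV24} goes through. What the paper's route buys instead is that it never needs this global identification: the comparison $\rho\le\tilde\rho$ handles $(a)\Rightarrow(b)$ with one inequality, and $(b)\Rightarrow(c)$ is a purely local argument.
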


\noindent Compare to \cite{Sz22,GGV22,GGV24} in the special case where $D=0$. Notice also that (b) is independent of both $D$ and $(E,h)$. It follows that if (a) holds for \emph{some} divisor $D$ and \emph{some} Hermitian bundle $(E,h)$ then both (a) and (c) hold for \emph{any} divisor $D$ and \emph{any} Hermitian bundle $(E,h)$.

\begin{proof}
Throughout the proof we will make use of the definitions and notations of Setting \ref{S:input}.

($a\Longrightarrow b$) In light of Remark~\ref{rem:rhoindep}, $A^2(U,\rho,E,h)=A^2(U,\rho_D,E,h)$, so we can assume that $\rho=\rho_D$.

    Choose small pairwise disjoint  neighborhoods $U_p \subseteq M$ for each $p\in \text{supp}(D_+)$ and modify $K_+$ slightly  to get a function $\widetilde K_+$  with the following properties:
    \begin{enumerate}
        \item $\widetilde K_+>0$
        \item $\widetilde K_+ \geq K_+$
        \item $\widetilde K_+=K_+$ on the set $M \setminus \big( \bigcup_{p\,\in\, \text{supp}(D_+)}U_p \big)$
    \end{enumerate}
    Let $\tilde\omega:=\widetilde K_+\omega$ and $\tilde\rho:=K_-\tilde\omega$. Then $\tilde\omega$  is a  genuine volume form and $\tilde\rho$  a  $D_-$
    volume form on $M$. Consequently $\left.\tilde\rho\right|_U$ will be a $\left.D_-\right|_U$-volume form. Since 
    $D_-\le0$, the first part of Theorem~\ref{thm:Phiisom} applies and yields that
    \begin{equation}\label{eq:csoda}    
    A^2(U,\tilde\rho,E,h)\cong A^2(U,\widetilde K_+\omega,E\otimes L_{D_-},h\otimes\kappa_-).
        \end{equation}

    By our construction $\rho\le\tilde\rho$. Thus 
    $A^2(U,\tilde\rho,E,h) \leq A^2(U,\rho,E,h)$. Since the latter space is finite-dimensional,  
     $A^2(U,\tilde\rho,E,h)$
     must be finite-dimensional as well.
     But then the isomorphism (\ref{eq:csoda})
    yields that
    $A^2(U,\widetilde K_+\omega,E\otimes L_{D_-},h\otimes\kappa_-)$
    is also finite-dimensional. Here however $\widetilde K_+\omega$ is an honest positive continuous volume form on M 
     and (\cite{GGV22}, Theorem 1.1) applies guaranteeing  that $M\setminus U$ is a locally polar set. 
     
     $(b\Longrightarrow c)$
     Since $M \setminus U$
     is locally polar, $M\setminus U= \partial U$, as locally polar sets are nowhere dense. Let $s \in A^2(U,\rho,E,h)$ and $p \in \partial U \setminus \text{supp}(D)$. Since $\rho$ is non-vanishing in a small open neighborhood $V$ of $p$, where $V\setminus U$
   is a polar subset,  Proposition 2.1 in \cite{Sz22} implies that $s$ extends holomorphically to $V$. As this holds for all $p \in \partial U \setminus \text{supp}(D)$, $s$ extends holomorphically over $M \setminus (\partial U\cap \text{supp}(D))$. Denote this extended section also by $s$.
   Now since $\partial U$ is locally polar and consequently it has zero Lebesgue measure, 
   we get that the extended section belongs to the space
   $ A^2(M\setminus(\partial U\cap \text{supp}(D)),\rho,E,h)$. In particular for each 
    point $p \in \partial U \cap \text{supp}(D)$, for a small enough neighborhood $V$ of $p$ (so that $V\cap\text{supp}(D_U)=\{p\}$), the section $s$ belongs to $A^2(V\setminus\{p\},\rho,E,h)$. Now Corollary~\ref{cor:L2sing} implies  $\text{div}(s)+D_U\ge0$.
   We proved  so far that $A^2(U,\rho,E,h) \subseteq \{s\in \mathcal M(M,E)\mid \text{div}(s)+D_{ U}\ge0\}$.  Then 
   Proposition~\ref{prop:merom-az-L2} finishes the proof.

   $(c\Longrightarrow a)$
   The vector space
   $\{s\in \mathcal M(M,E)\mid \text{div}(s)+D_U\ge0\}$ is isomorphic to
   $H^0(M,E\otimes L_{D_U})$ and since $M$ is compact, this latter vector space is finite-dimensional.
\end{proof}

\section{Preliminaries on singular curves}

\subsection{Singularity theory} This subsection is a brief summary of the concepts and notations used throughout the text. For more details on curve singularities see \cite{Wa04}, \cite{GLS07} or \cite{dJP13}.\vspace{3mm}

\noindent In what follows, the word \emph{curve} will mean a connected and reduced complex analytic space of pure dimension one unless otherwise specified. The singular locus of  a curve $X$ will be denoted by $\text{Sing}X$. Given a subset $B\subseteq X$, we will use the notation $B^* = B \setminus \text{Sing}X$.

\begin{definition}
    The \emph{normalization} of a curve $X$ is a (not necessarily connected) smooth curve $\widetilde{X}$ together with a finite and surjective holomorphic map $\pi : \widetilde{X}\to X$ such that its restriction $\pi:\widetilde{X} \setminus \pi^{-1}(\text{Sing}X) \xrightarrow{\, \cong\, } X^*$ is a biholomorphism.

    We remark that it satisfies the universal property that any holomorphic map $M \to X$ from a Riemann surface $M$ factors through $\pi$.
\end{definition}

\begin{prop}\label{prop:puiseux}
    Let $(X,o)$ be an irreducible curve germ. Then there exists a holomorphic germ $\pi: (\bC,0) \to (X,o)$ such that a small enough representative of $\pi$ is the normalization of an open neighborhood of $o \in X$.
\end{prop}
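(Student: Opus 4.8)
The plan is to produce the Puiseux parametrization explicitly and then verify that a small representative is the normalization by checking the defining universal/biholomorphism property on the smooth locus. First I would invoke the classical Puiseux theorem: since $(X,o)$ is an irreducible curve germ, we may choose a local embedding $(X,o)\hookrightarrow(\bC^N,0)$ and, after a generic linear change of coordinates, assume the projection to the first coordinate $x_1$ is finite of some degree $n$ (the multiplicity of the branch). Irreducibility of the germ means the local ring $\cO_{X,o}$ is a domain, and its integral closure in the fraction field is a discrete valuation ring, i.e. a power series ring in one variable $t$; concretely one gets convergent power series $x_j=\phi_j(t)$ with $x_1=t^n$ (or more symmetrically all $x_j\in\bC\{t\}$), defining a holomorphic germ $\pi:(\bC,0)\to(X,o)$, $t\mapsto(\phi_1(t),\dots,\phi_N(t))$. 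This is the map we want.

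Next I would fix a sufficiently small representative. Choose $\varepsilon>0$ so that the $\phi_j$ converge on $\{|t|<\varepsilon\}$, the image lands in a Stein neighborhood on which $X$ is the honestly embedded germ, and $\text{Sing}X\cap(\text{image})$ is either empty or just $\{o\}$ (possible since $\text{Sing}X$ is a discrete subset of the curve). On the punctured disk $\{0<|t|<\varepsilon\}$ I would argue that $\pi$ is injective and an immersion onto $X^*$ near $o$: injectivity follows from the standard fact that a Puiseux parametrization of an irreducible branch is generically one-to-one, and the only possible failure would be at finitely many points which we exclude by shrinking $\varepsilon$; the immersion property holds off the finitely many zeros of $\pi'$, again removed by shrinking. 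Hence $\pi$ restricts to a biholomorphism from the punctured disk onto $X^*\cap(\text{nbhd of }o)$, which is exactly the condition in the definition of normalization, together with finiteness (clear, as $\pi$ is proper with finite — indeed single-point — fibers after shrinking) and surjectivity onto the shrunk neighborhood of $o$ (the image of $\pi$ is a closed analytic subset of dimension one containing $o$, hence all of the irreducible germ).

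The universal property (that any holomorphic map from a Riemann surface factors through $\pi$) I would then either cite as part of the general statement already recorded in the excerpt's definition of normalization, or deduce from uniqueness of normalizations: any two normalizations of the same germ are canonically isomorphic, so the germ $\pi$ constructed here agrees with the abstract normalization.

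The main obstacle, and the only genuinely nontrivial input, is the generic injectivity of the Puiseux parametrization of an irreducible branch — equivalently, that the normalization of an irreducible germ is given by a single power series disk rather than a disjoint union. This is precisely where irreducibility is used, and it rests on the primitive-element / integral-closure argument for the one-dimensional analytic local ring $\cO_{X,o}$; I would present it by noting that $\cO_{X,o}$ being a one-dimensional analytic domain, its normalization $\widetilde{\cO}$ is a semilocal Dedekind domain which is local (since the germ is irreducible its normalization is connected), hence a DVR, hence $\bC\{t\}$, and the inclusion $\cO_{X,o}\hookrightarrow\bC\{t\}$ dualizes to the desired finite birational map $\pi$. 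Everything else is routine shrinking of representatives.
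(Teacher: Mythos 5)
Your proposal is correct, but note that the paper itself gives no proof of this proposition: it is quoted as the classical Puiseux parametrization theorem, with the standard references \cite{Wa04}, \cite{GLS07}, \cite{dJP13} cited at the start of the subsection, so there is no in-paper argument to compare against. Your sketch is a faithful rendition of the standard proof from those sources: irreducibility makes $\mathcal{O}_{X,o}$ a one-dimensional analytic local domain, its integral closure is $\bC\{t\}$, and the resulting finite birational map $t\mapsto(\phi_1(t),\dots,\phi_N(t))$ becomes, after shrinking, a finite surjection that is bijective (hence biholomorphic, since the target is then smooth) off the origin, which is exactly the paper's definition of normalization. The one step you rightly flag as the crux --- that the integral closure is \emph{local}, i.e.\ that an irreducible analytic germ has connected normalization --- does need an honest justification rather than the parenthetical "since the germ is irreducible its normalization is connected"; the clean ways to supply it are either that analytic local rings are Henselian (so the normalization of a Henselian local domain is local) or the explicit Newton--Puiseux algorithm, and with either of these inserted your argument is complete. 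Two cosmetic points: injectivity of $\pi$ on a small punctured disk is genuine, not merely generic, once the finitely many points of non-injectivity (a discrete analytic set, by generic one-to-one-ness) are removed by shrinking, as you in effect argue; and if $o$ happens to be a smooth point the biholomorphism should be asserted on the full disk, which follows since the integral closure of a regular local ring is itself.
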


\noindent The map germ of Proposition \ref{prop:puiseux} is called the \emph{Puiseux parametrization} of $X$ around $o$. Gluing together Puiseux parametrizations yields the following

\begin{corollary}
    Any curve $X$ admits a normalization $\widetilde{X} \to X$.
\end{corollary}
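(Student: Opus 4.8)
The plan is to glue together the local Puiseux parametrizations from Proposition~\ref{prop:puiseux} into a global normalization. First I would cover the singular locus: since $\text{Sing}X$ is a discrete set in the curve $X$ (being a proper analytic subset of a one-dimensional space), around each $o \in \text{Sing}X$ I can choose a small open neighborhood $V_o \subseteq X$ on which $X$ decomposes into finitely many irreducible components $V_o^{(1)}, \dots, V_o^{(k_o)}$, each an irreducible curve germ. Applying Proposition~\ref{prop:puiseux} to each branch yields holomorphic maps $\pi_o^{(i)} : (\bC,0) \to V_o^{(i)}$ which, on a small enough representative, are normalizations of neighborhoods of $o$ in the respective branch. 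Taking the disjoint union of these discs over all branches and all singular points produces a smooth curve mapping finitely and surjectively onto $\bigcup_o V_o$, biholomorphically away from the preimages of the singular points.

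Next I would handle the smooth part. The set $X^* = X \setminus \text{Sing}X$ is an open Riemann surface, so it is its own normalization via the identity map. The task is then to patch the local normalizations near singular points together with $X^*$ along the overlaps. Concretely, shrink the neighborhoods $V_o$ so they are pairwise disjoint, let $V_o^\times := V_o \setminus \{o\}$, and note that over $V_o^\times$ (which lies in $X^*$) the local normalization $\bigsqcup_i \pi_o^{(i)}$ restricted to the punctured discs is a biholomorphism onto $V_o^\times \cap X^*$ — this is precisely the content of the "small enough representative is a normalization" clause. I would then form $\widetilde{X}$ by taking the disjoint union of $X^*$ with all the discs $\{z : |z| < \varepsilon_o^{(i)}\}$, and gluing each punctured disc $\{0 < |z| < \varepsilon_o^{(i)}\}$ to its image in $X^*$ via the biholomorphism induced by $\pi_o^{(i)}$. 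The resulting space $\widetilde{X}$ is a smooth curve (a one-dimensional complex manifold, not necessarily connected), and the local maps assemble into a well-defined holomorphic map $\pi : \widetilde{X} \to X$ because they agree on the overlaps.

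Finally I would verify the defining properties of a normalization. Surjectivity is clear since the image contains $X^*$ and every singular point. Finiteness (properness with finite fibers) follows because over each $V_o$ the map has exactly $k_o$ sheets (one per branch) and over $X^*$ it is one-to-one; local properness near singular points is part of Proposition~\ref{prop:puiseux}, and properness over $X^*$ is immediate from the biholomorphism there, so $\pi$ is proper by a standard gluing argument on a locally finite cover. By construction $\pi$ restricts to a biholomorphism $\widetilde{X} \setminus \pi^{-1}(\text{Sing}X) \xrightarrow{\cong} X^*$. The main obstacle I anticipate is the bookkeeping required to make the gluing genuinely well-defined — one must check that on triple overlaps (a singular point's neighborhood meeting $X^*$) the transition maps are consistent, and that the Puiseux parametrizations of distinct branches at the same point give disjoint images in the punctured neighborhood after sufficient shrinking, so the glued space is Hausdorff. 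These are routine consequences of the local theory once the neighborhoods are chosen small and pairwise disjoint, but they are the points deserving care.
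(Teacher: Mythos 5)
Your proposal is correct and follows the same route the paper indicates: the paper's proof of this corollary is precisely the one-line remark that one glues together the Puiseux parametrizations of Proposition~\ref{prop:puiseux}, and your construction simply spells out that gluing (disjoint discs over the branches at each singular point, patched to $X^*$ along the punctured neighborhoods) together with the routine checks of surjectivity, finiteness, Hausdorffness, and the biholomorphism away from $\pi^{-1}(\text{Sing}X)$. No further comparison is needed.
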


 \begin{remark}\label{rem:branches} At every point $p\in X$ the curve $X$ has a local decomposition of the form $(X,p) = \bigcup_{i=1}^r(X_i,p)$ where the germ $(X_i,p)$ is irreducible for all $i$. Since the sets $\overline{\pi^{-1}(X_i\setminus \{p\})}$ and $\pi^{-1}(p)$ have a unique intersection point $\tilde{p}_i \in \widetilde{X}$, the elements of $\pi^{-1}(p)$ correspond bijectively to the set of branches at $p$ via the germ of the normalization $\pi:(\widetilde{X},\tilde{p}_i) \to (X_i,p)$. \end{remark}

\begin{remark} \label{rem:partialnorm}
    A curve $X$ also admits partial normalizations. Let $B \subset X$ be any subset. Then there exists a (not necessarily connected) curve $X^B$ together with a finite and surjective holomorphic map $\pi^B: X^B \to X$ which restricts to the normalization of a neighborhood of $B$ and is a biholomorphism everywhere else.
\end{remark}

\begin{definition}\label{def:intmult}
    Let $(X,o) \subset (\bC^n,o)$ be an irreducible curve germ with Puiseux parametrization $\pi$ and let $f : (\bC^n,o) \to (\bC,0)$ be a map germ defining the hypersurface $(F,o) \subset (\bC^n,o)$. Their \emph{intersection multiplicity} is defined as the number
    $$(X\cdot F)_o := \text{ord}\big(\, f\circ \pi : (\bC,0) \to (\bC,0)\,\big)$$
\end{definition}

\begin{definition}
    Let $(X,o) \subset (\bC^n,o)$ be a curve germ. Its \emph{multiplicity} is defined as
    \begin{align*}
        m(X,o):=& \min \big\{\, (X\cdot F)_o \ \big| \ (F,o) \subset (\bC^n,o) \text{ is a hypersurface} \,\big\}.
    \end{align*}
    It is independent of the embedding $(X,o) \hookrightarrow (\bC^n,o)$ and $m(X,o) = 1$ if and only if $(X,o)$ is nonsingular. If the germ $(X,o)$ has decomposition $(X,o) = (X_1,o)\cup(X_2,o)$ then $m(X,o) = m(X_1,o)+m(X_2,o)$.
\end{definition}

\begin{remark}
    Let $\pi$ denote the Puiseux parametrization of an irreducible curve germ $(X,o) \subset (\bC^n,o)$. In an appropriate local coordinate system it has the form $\pi(t) = \big(t^{m(X,o)}, \pi_2(t),\dots,\pi_n(t)\big)$ with $\text{ord}(\pi_i) > m(X,o)$ for all $i=2,\dots, n$.
\end{remark}

\begin{definition}\label{def:delta}
    Let $(X,o)$ be a curve germ with normalization $\pi$. Its \emph{delta invariant} is defined as the quantity
    $$\delta(X,o) := \dim_\bC \mathcal{O}_{\widetilde{X},\tilde{o}}\big/\pi^*\mathcal{O}_{X,o}.$$
    If $U$ is an open subset of a curve $X$ then $\delta(U)$ is simply defined as the sum of the delta invariants of the singular points of $X$ in $U$.
\end{definition}

    \begin{prop}\label{prop:deltastein}
        Let $U$ be a Stein curve with normalization $\widetilde{U} \to U$. Then
        $$\delta(U) = \dim_\bC  \mathcal{O}(\widetilde{U}) \big/ \pi^*\mathcal{O}(U).$$
    \end{prop}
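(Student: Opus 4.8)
The plan is to deduce this from the \emph{normalization exact sequence} on $U$ together with Cartan's Theorem~B. The point is that $\delta(U)$ is, by Definition~\ref{def:delta}, the total length of a skyscraper sheaf $\mathcal{Q}$ supported on $\text{Sing}\,U$, and the Stein hypothesis makes the higher cohomology that would obstruct the local-to-global passage vanish.

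Concretely, I would first form the short exact sequence of coherent $\mathcal{O}_U$-modules
$$0 \to \mathcal{O}_U \xrightarrow{\ \pi^*\ } \pi_*\mathcal{O}_{\widetilde U} \to \mathcal{Q} \to 0,$$
where $\mathcal{Q}$ is defined as the cokernel; here $\pi_*\mathcal{O}_{\widetilde U}$ is coherent because $\pi$ is finite. The map $\pi^*$ is injective, since a holomorphic function on $U$ that pulls back to $0$ on $\widetilde U$ vanishes on the dense set $U^*$, hence on all of $U$; and $\pi^*$ is an isomorphism over $U^* = U\setminus\text{Sing}\,U$ because $\pi$ restricts to a biholomorphism there. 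Consequently $\mathcal{Q}$ is supported on the discrete set $\text{Sing}\,U$. Next I would identify its stalks: since $\pi$ is finite, for $p\in\text{Sing}\,U$ one has $(\pi_*\mathcal{O}_{\widetilde U})_p = \bigoplus_{\tilde p\in\pi^{-1}(p)}\mathcal{O}_{\widetilde U,\tilde p}$, the semilocal ring over the whole fiber (this is the correct reading of $\mathcal{O}_{\widetilde X,\tilde o}$ in Definition~\ref{def:delta} when $(X,p)$ is reducible), so $\mathcal{Q}_p \cong \big(\bigoplus_{\tilde p\in\pi^{-1}(p)}\mathcal{O}_{\widetilde U,\tilde p}\big)\big/\pi^*\mathcal{O}_{U,p}$ has $\dim_\bC\mathcal{Q}_p = \delta(U,p)$. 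Being a direct sum of skyscraper sheaves, $\mathcal{Q}$ is flasque, so $H^1(U,\mathcal{Q}) = 0$ and $H^0(U,\mathcal{Q}) = \bigoplus_{p\in\text{Sing}\,U}\mathcal{Q}_p$ has dimension $\sum_p\delta(U,p) = \delta(U)$ (possibly $\infty$, in which case both sides of the claimed equality are infinite and there is nothing more to prove).

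Finally I would take the long exact cohomology sequence. Since $\pi$ is finite we have $H^0(U,\pi_*\mathcal{O}_{\widetilde U}) = H^0(\widetilde U,\mathcal{O}_{\widetilde U}) = \mathcal{O}(\widetilde U)$ (either from $R^q\pi_*\mathcal{O}_{\widetilde U} = 0$ for $q>0$ together with the Leray spectral sequence, or directly from the definition of the pushforward), and since $U$ is Stein, Cartan's Theorem~B gives $H^1(U,\mathcal{O}_U) = 0$. Hence
$$0 \to \mathcal{O}(U) \xrightarrow{\ \pi^*\ } \mathcal{O}(\widetilde U) \to \bigoplus_{p\in\text{Sing}\,U}\mathcal{Q}_p \to 0$$
is exact, which yields $\dim_\bC \mathcal{O}(\widetilde U)\big/\pi^*\mathcal{O}(U) = \delta(U)$. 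The computation is essentially routine; the only genuine inputs are coherence of $\pi_*\mathcal{O}_{\widetilde U}$ (finiteness of $\pi$) and the vanishing $H^1(U,\mathcal{O}_U)=0$ (Steinness), so I do not expect a real obstacle — the one place to be careful is the bookkeeping over a fiber $\pi^{-1}(p)$ with several points, and the observation that it is precisely the identification $H^0(U,\pi_*\mathcal{O}_{\widetilde U}) = \mathcal{O}(\widetilde U)$ which turns the local skyscraper computation into the global statement.
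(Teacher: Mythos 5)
Your argument is correct and is essentially the paper's own proof: the same normalization short exact sequence $0 \to \mathcal{O}_U \to \pi_*\mathcal{O}_{\widetilde U} \to \pi_*\mathcal{O}_{\widetilde U}/\mathcal{O}_U \to 0$, the identification of global sections of the pushforward with $\mathcal{O}(\widetilde U)$, and the vanishing $H^1(U,\mathcal{O}_U)=0$ from Steinness. You merely spell out the stalk computation of the skyscraper cokernel, which the paper leaves implicit.
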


    \begin{proof}
        Consider the short exact sequence $0 \to \mathcal{O}_U \to \pi_*\mathcal{O}_{\widetilde{U}} \to \pi_*\mathcal{O}_{\widetilde{U}} / \mathcal{O}_U \to 0$ inducing the long exact sequence
    $0 \to \mathcal{O}(U) \to \mathcal{O}(\widetilde{U}) \to H^0(U,\pi_*\mathcal{O}_{\widetilde{U}}/\mathcal{O}_U ) \to H^1(U,\mathcal{O}_U)$
    on sheaf cohomologies. The fourth term vanishes as $U$ is Stein and the statement follows.
    \end{proof}

\begin{remark} \label{rem:vegeskodim}
    The quotient sheaf $\pi_*\mathcal{O}_{\widetilde{U}} / \mathcal{O}_U$ is a coherent skyscraper sheaf supported on $\text{Sing}\,U$. It follows that $\delta(X,p)$ is finite for any point $p \in X$. Even more, taking the tensor product of the above short exact sequence with a holomorphic vector bundle $E$ shows the following: if $\text{Sing}\,U$ is finite, the inclusion $\pi^*\mathcal{O}(U,E) \hookrightarrow \mathcal{O}(\widetilde{U},\pi^*E)$ has finite codimension.
\end{remark}

\subsection{Locally polar sets on singular curves} \label{ss:locpol} In this subsection we compare locally polar sets on a singular curve $X$ to those on its normalization $\widetilde{X}$.

\begin{definition} An upper semicontinuous map $\varphi:X\rightarrow [-\infty, \infty)$ is called weakly subharmonic if for any holomorphic map $f:\Delta\rightarrow X$ from the unit disk $\Delta=\{t\in\bC \mid |t|<1\}$ into $X$, the function $\varphi\circ f$ is either identically $-\infty$, or subharmonic.
\end{definition}
\begin{definition}
 A map $\varphi:X\rightarrow [-\infty, \infty)$ is called subharmonic if it is not identically $-\infty$ on any irreducible component of $X$ and locally it is the restriction of a plurisubharmonic function on an open set $U$ in $\bC^n$ for a local embedding of $X$.
\end{definition}

\begin{remark}\label{rem:FN}
    By (\cite{FN80}, Theorem 5.3.1) these two definitions are in fact equivalent if the function $\varphi$ is not identically $-\infty$.
\end{remark}

    \begin{definition}\label{def:locpol}
   A set $H\subset X$ is called locally polar if for every $p\in H$ there exists an open neighborhood $U\subset X$ of $p$ and a subharmonic function $\varphi:U \to [-\infty,\infty)$ such that $U\cap H\subset \{\varphi=-\infty\}$.
\end{definition}

\begin{theorem}\label{thm:loc-polar-equiv}
Let $X$ be a curve with normalization $\pi:\widetilde X\rightarrow X$. Then a subset $H\subset X$ is locally polar iff  $\widetilde H:=\pi^{-1}(H)\subset\widetilde X$ is locally polar.
\end{theorem}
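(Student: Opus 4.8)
\textbf{Proof plan for Theorem \ref{thm:loc-polar-equiv}.}

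The plan is to prove both implications by a local argument, reducing everything to the behavior of the normalization near a single point. Since being locally polar is a local property, it suffices to fix $p \in X$ (or $\tilde p \in \pi^{-1}(p)$) and work in a neighborhood. By Remark \ref{rem:branches}, the normalization near $p$ decomposes the germ into branches, and by Proposition \ref{prop:puiseux} each branch is parametrized by a Puiseux map germ $\pi_i : (\bC,0) \to (X_i,p)$; locally $\widetilde{X}$ near $\pi^{-1}(p)$ is just a disjoint union of such discs. So the core of the proof is the one-branch statement: if $(X,o) \subset (\bC^n,o)$ is an irreducible curve germ with normalization (= Puiseux parametrization) $\pi : (\bC,0) \to (X,o)$, and $H \subset X$ is a subset of a small representative, then $H$ is locally polar at $o$ iff $\pi^{-1}(H)$ is locally polar at $0$.

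For the forward direction ($H$ locally polar $\Rightarrow \pi^{-1}(H)$ locally polar), I would take the subharmonic function $\varphi$ on a neighborhood $U \subseteq X$ of $o$ with $U \cap H \subseteq \{\varphi = -\infty\}$ and simply pull it back: $\varphi \circ \pi$ is defined on a disc around $0 \in \bC$. Since $\varphi$ is locally the restriction of a plurisubharmonic function on an ambient $\bC^n$ and $\pi$ is a holomorphic map into $\bC^n$, the composition $\varphi \circ \pi$ is subharmonic on the disc (the composition of a plurisubharmonic function with a holomorphic map is plurisubharmonic, hence subharmonic in one variable); one only needs to check it is not identically $-\infty$, which follows because $\varphi$ is not $\equiv -\infty$ on the (unique) component and $\pi$ has dense image in a neighborhood of $o$ in $X_i$, so $\{\varphi \circ \pi = -\infty\}$ cannot be all of the disc (an $\equiv -\infty$ composition would force $\varphi \equiv -\infty$ on $X_i$). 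Then $\pi^{-1}(U \cap H) \subseteq \{\varphi \circ \pi = -\infty\}$ gives local polarity of $\pi^{-1}(H)$ at $0$, and collecting over the finitely many branches handles a general point. This direction is essentially formal.

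The reverse direction ($\pi^{-1}(H)$ locally polar $\Rightarrow H$ locally polar) is where the real work lies, and I expect it to be the main obstacle. Given a subharmonic $\psi$ on a disc $\Delta$ around $0$ with $\pi^{-1}(H) \cap \Delta \subseteq \{\psi = -\infty\}$, I need to produce a subharmonic function on a neighborhood of $o$ in $X$ — equivalently, by Remark \ref{rem:FN}/the definition, a weakly subharmonic function, i.e. one that becomes subharmonic after composing with \emph{any} holomorphic disc into $X$ — whose $-\infty$ locus catches $H$. The natural candidate is to push $\psi$ forward: since $\pi : \Delta \to X_i$ is a homeomorphism onto its image for a single branch (it is injective off $0$ and sends $0$ to $o$), one can try to set $\varphi := \psi \circ \pi^{-1}$ on $X_i$. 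The issue is that $\pi^{-1}$ is only a homeomorphism, not holomorphic (it is a branch of a multivalued algebraic function, e.g. $z \mapsto z^{1/m}$ in suitable coordinates), so $\varphi$ is not obviously subharmonic in the ambient sense — one must verify upper semicontinuity and the weak subharmonicity condition directly. For a germ with several branches, one must moreover glue the push-forwards from different branches at the common point $o$: taking the maximum of the (upper semicontinuous regularizations of the) branchwise functions is the standard device, using that a finite max of subharmonic functions is subharmonic, but one has to be careful at $o$ where the branches meet. An alternative, perhaps cleaner route for this direction: use that local polarity is detected by the pluricomplex/relative extremal function or by the fact (standard in potential theory on analytic sets, e.g. via the work of Forn\ae ss--Narasimhan) that $H \subseteq X$ is locally polar iff it has vanishing capacity in an appropriate sense, and capacity transforms controllably under the finite map $\pi$; then $\pi^{-1}(H)$ polar in $\Delta$ (a classical $1$-dimensional notion) translates back. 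Either way, the technical heart is showing that a "push-forward" of a subharmonic function along the inverse of a Puiseux parametrization remains subharmonic on $X$ — this is the step I would budget the most care for, likely invoking Remark \ref{rem:FN} to check the weak subharmonicity criterion branch by branch rather than trying to exhibit an ambient plurisubharmonic extension directly.
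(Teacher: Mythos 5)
Your forward direction is correct, and it is in fact a different route from the paper's: you pull back the (locally ambient plurisubharmonic) function through the Puiseux map and check it is not identically $-\infty$, whereas the paper never composes with $\pi$ at all — it uses that $\pi$ is biholomorphic off $\pi^{-1}(p)$, that a single point is polar, and that a locally polar subset of a plane domain is polar (\cite{C95}, Ch.~21), so $\widetilde H\cap U=\{\tilde p\}\cup\big(\widetilde H\cap(U\setminus\{\tilde p\})\big)$ is polar. Both arguments work; yours needs the small extra care you indicate about the $-\infty$ locus, which is fine since a subharmonic function on the disc parametrizing the whole branch cannot be $-\infty$ on an open set.

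The genuine gap is in the reverse direction, precisely at the step you explicitly defer (``the step I would budget the most care for''). Identifying the candidate $v_j:=\chi_j\circ\varphi_j^{-1}$ on a branch $A_j$ and saying one should check weak subharmonicity via Remark \ref{rem:FN} is the correct plan — it is the paper's plan — but the verification is the actual content of the theorem and you do not supply it. The missing argument is short but essential: for any nonconstant holomorphic disc $\psi:\Delta\to A_j$, the map $\varphi_j^{-1}\circ\psi$ is continuous on $\Delta$ (the Puiseux parametrization is a homeomorphism) and holomorphic on $\Delta\setminus\psi^{-1}(p)$ (where $\varphi_j$ is biholomorphic); since $\psi^{-1}(p)$ is discrete, the Riemann removable singularity theorem makes $\varphi_j^{-1}\circ\psi$ holomorphic on all of $\Delta$, so $v_j\circ\psi=\chi_j\circ(\varphi_j^{-1}\circ\psi)$ is subharmonic, and Remark \ref{rem:FN} upgrades $v_j$ to a subharmonic function on $A_j$. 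Without this, ``the push-forward along $\pi^{-1}$ stays subharmonic'' is an assertion, not a proof. Your gluing device is also off as stated: the $v_j$ live on different branches, so ``a finite max of subharmonic functions is subharmonic'' does not apply (extending each $v_j$ by $-\infty$ off its branch destroys subharmonicity). The paper instead defines $v$ on a small neighborhood $G$ piecewise by $v|_{A_j}=v_j$, which is consistent because $p\in H$ forces $v_j(p)=\chi_j(0)=-\infty$ for every $j$, and then checks weak subharmonicity of $v$ by noting that for any nonconstant holomorphic $\psi:\Delta\to G$ the set $\Delta\setminus\psi^{-1}(p)$ is connected, so $\psi(\Delta)$ lies in a single branch and the previous step applies; one must also adjoin the origin to the polar set $\varphi_j^{-1}\big((A_j\setminus\{p\})\cap H\big)$ before choosing $\chi_j$, again via the union-of-polar-sets lemma. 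Your alternative capacity-based route is too vague to assess as a proof.
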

\begin{proof} ( $\Longrightarrow$ )
Suppose that $H\subset X$ is locally polar.
Let $p\in H$ and $\tilde p\in \pi^{-1}(p)$. Since $\pi$ is a local biholomorphism at smooth points, we can assume that $p$ is a singular point in $X$.  Choose
a small open neighborhood $U$ of $\tilde p$ in $\widetilde X$, so that $U$ is biholomorphic to a disk in $\bC$ and  $U\cap\pi^{-1}(p)=\tilde p$. Since $\left.\pi\right|_U$ is locally biholomorphic at every $z \in U \setminus \{\tilde{p}\}$, the set
$\widetilde H\cap (U\setminus\{\tilde p\})$ is locally polar in $U$. Hence by (\cite{C95}, Ch.21, Proposition 5.5) it is polar. But the union of two polar sets  is again polar (\cite{C95}, Ch.21, Lemma 5.6), so
we get that $\widetilde H\cap U=\{\tilde p\}\cup \widetilde H\cap (U\setminus \{\tilde{p}\})$ is polar as well.

( $\Longleftarrow$ )
Now assume that $\widetilde H\subset\widetilde X$ is locally polar. Let $p\in H$.
Again as before, we can assume that $p$ is a singular point in $X$. We need to find an open neighborhood  $G$ of $p$  with $ G\cap H\subset  G$  polar. Let $G$ be  a small open neighborhood with $(G\setminus \{p\})\cap \text{Sing} X=\emptyset$ and so that $G$ is biholomorphic to an analytic subset of an open set $W\subset\bC^n$ for some  $n$. Let $A_j, j=1,\dots, m$ be the irreducible components of $ G$ and let $\tilde{p}_j$ be the point of $\pi^{-1}(p)$ corresponding to $A_j$ (see Remark \ref{rem:branches}). Composing the normalization $\pi$ with a local coordinate system around $\tilde{p}_j$ in $\widetilde{X}$ we obtain a map $\varphi_j:(V_j,0)\rightarrow (A_j,p)$ where $V_j\subset\bC$ is an open disk centered at $0$. Thus $\varphi_j:V_j\setminus\{0\}\rightarrow A_j\setminus\{p\}$ is biholomorhic, and so $\varphi^{-1}_j\big((A_j\setminus\{p\})\cap H\big)\subset V_j$ is locally polar since $\widetilde{H}$ is locally polar. Hence by (\cite{C95}, Ch.21, Proposition 5.5 ) it is polar and by
(\cite{C95}, Ch.21, Lemma 5.6), 
 $\varphi_j^{-1}(A_j \cap H) = \varphi^{-1}_j((A_j\setminus\{p\})\cap H) \cup \{0\} \subset V_j$
 is also polar in $V_j$. Thus there exists a subharmonic function $\chi_j$ on $V_j$, so that $\varphi^{-1}_j(A_j\cap H)\subset\{\chi_j=-\infty\}$. As $\varphi_j$ is a Puiseux parametrization, $\varphi_j:V_j\rightarrow A_j$ is a homeomorphism.
 Let $v_j:=\chi_j\circ\varphi^{-1}_j$.
 This is then upper semicontinuous. We show that it is weakly subharmonic.
 Let $\psi:\Delta \rightarrow A_j$ be a nonconstant holomorphic function, where $\Delta \subset\bC$ is the unit disk. Then $v_j\circ\psi=\chi_j\circ\varphi^{-1}_j\circ\psi$. The function $\varphi^{-1}_j\circ\psi$ is continuous
 in $\Delta$ and holomorphic in $\Delta \setminus\psi^{-1}(p)$, hence by the Riemann extension theorem, it is holomorphic in $\Delta$. This shows that $v_j$ is weakly subharmonic and by  Remark \ref{rem:FN} it is subharmonic. Also by our construction
 $A_j\cap H\subset\{v_j=-\infty\}$, i.e. $A_j\cap H$ is polar in $A_j$.

 Define $v:G\rightarrow [-\infty,\infty)$ by $v(\zeta):=v_j(\zeta)$, if $\zeta\in A_j$. Now $v$ is well defined, since $v_j(p)=-\infty$ for all $j$. Since
 $\left.v\right|_{A_j}=v_j$ is upper semicontinuous on $A_j$, one can easily see that $v$ is upper semicontinuous on $ G$. Let $\psi:\Delta \rightarrow  G$ be nonconstant, holomorphic, where $\Delta$ is the unit disk. Then $\Delta \setminus\psi^{-1}(p)$ must be connected, hence $\psi(\Delta \setminus\psi^{-1}(p))$ is connected as a subset of $ G\setminus\{p\}=\bigsqcup^m_{j=1}(A_j\setminus\{p\})$. The sets $A_j\setminus\{p\}$ are pairwise disjoint, therefore there exists  $j$ so that $\psi(\Delta \setminus\psi^{-1}(p))\subset A_j\setminus\{p\}$. Hence $\psi(\Delta )\subset A_j$ and so $v\circ\psi=v_j\circ\psi$ that is subharmonic. Refering again  to Remark \ref{rem:FN}, we finally obtain that the weakly subharmonic $v$ is in fact subharmonic. By our construction $ G\cap H\subset\{v=-\infty\}$ i.e. $H$ is locally polar in $X$ and we are done.
\end{proof}

\begin{corollary}\label{rem:locpolgen}
    Let $Y \subset \bC^n$ be an affine algebraic curve with projective closure $X:=\overline{Y}$ and let $\pi : \widetilde{X} \to X$ be the normalization of $X$. Then $H\subset Y$ is locally polar if and only $\pi^{-1}(H) \subset \widetilde{X}$ is locally polar.
\end{corollary}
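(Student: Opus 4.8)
The plan is to deduce Corollary \ref{rem:locpolgen} from Theorem \ref{thm:loc-polar-equiv} by reducing the affine statement to the projective one. The key observation is that $Y \subseteq X$ is an open subset (in the analytic topology) of the projective curve $X = \overline{Y}$, since $Y = X \setminus (X \cap H_\infty)$ where $H_\infty \subset \bP^n$ is the hyperplane at infinity and $X \cap H_\infty$ is a finite set of points (as $X$ is irreducible and not contained in $H_\infty$). Local polarity is a local property, so a subset $H \subseteq Y$ is locally polar in $Y$ if and only if it is locally polar when viewed as a subset of $X$: indeed, the defining condition in Definition \ref{def:locpol} only asks for a subharmonic function on some open neighborhood $U \subseteq X$ of each point $p \in H$, and since $Y$ is open in $X$ we may always shrink $U$ to lie inside $Y$, and conversely any neighborhood in $Y$ is a neighborhood in $X$.

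First I would spell out that $X \cap H_\infty$ is finite and hence $Y$ is open in $X$, together with the remark just made that local polarity of $H \subseteq Y$ is unambiguous whether computed in $Y$ or in $X$. Second, I would apply Theorem \ref{thm:loc-polar-equiv} directly to the curve $X$ and its normalization $\pi : \widetilde{X} \to X$: this gives that $H \subseteq X$ is locally polar in $X$ if and only if $\pi^{-1}(H) \subseteq \widetilde{X}$ is locally polar in $\widetilde{X}$. Combining the two steps yields exactly the claim. One small point to address is that $\pi^{-1}(H)$ as a subset of $\widetilde{X}$ is the same set whether we regard $\pi$ as the normalization of $X$ or restrict attention to $\pi^{-1}(Y) \to Y$, which is immediate since $\pi$ restricted to $\pi^{-1}(Y)$ is the normalization of $Y$ (normalization commutes with open restriction).

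There is essentially no obstacle here; the corollary is a formal consequence of Theorem \ref{thm:loc-polar-equiv} once one notes that $Y$ is open in its projective closure and that local polarity is a local notion insensitive to whether the ambient curve is $Y$ or $X$. The only thing requiring a sentence of justification is the openness of $Y$ in $X$, which follows because $X$ is a one-dimensional irreducible projective variety not contained in the hyperplane at infinity, so its intersection with that hyperplane is a proper closed subvariety, hence finite.

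\begin{proof}
    Since $X = \overline{Y} \subset \bP^n$ is an irreducible projective curve not contained in the hyperplane at infinity $H_\infty$, the intersection $X \cap H_\infty$ is a finite set of points and $Y = X \setminus (X \cap H_\infty)$ is open in $X$ in the analytic topology. Consequently, a subset $H \subseteq Y$ is locally polar in $Y$ if and only if it is locally polar in $X$: this is because the condition of Definition \ref{def:locpol} at a point $p \in H$ only requires a subharmonic function on some open neighborhood of $p$, and such neighborhoods may be taken inside the open set $Y$. Moreover $\pi^{-1}(Y) \to Y$ is the normalization of $Y$, so $\pi^{-1}(H)$ is the same subset of $\widetilde{X}$ in either interpretation. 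Now apply Theorem \ref{thm:loc-polar-equiv} to the curve $X$ with its normalization $\pi : \widetilde{X} \to X$: the subset $H \subseteq X$ is locally polar if and only if $\pi^{-1}(H) \subseteq \widetilde{X}$ is locally polar. Combining this with the previous remark gives the claim.
\end{proof}
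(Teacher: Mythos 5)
Your argument is correct and is exactly the (implicit) route the paper intends: the corollary is a direct application of Theorem \ref{thm:loc-polar-equiv}, once one notes that $Y$ is open in $X$ and that local polarity is a local condition at points of $H$. One small inaccuracy: the paper does not assume $Y$ (hence $X$) is irreducible, but your finiteness claim for $X\cap H_\infty$ still holds because every irreducible component of $X$ is the closure of a component of $Y$ and so is not contained in $H_\infty$.
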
 

\section{Bergman spaces on algebraic curves}

\subsection{The projective case} Throughout this subsection $X$ will denote a compact curve (embedded into some projective space if necessary).

\begin{definition}\label{def:vol}
    A volume form on $X$ is defined as a volume form $\sigma$ on the Riemann surface $X^*$ with the following additional property: for any singular point $p \in \text{Sing}X$ there exists a neighborhood $U\subseteq X$ of $p$ together with an analytic embedding $\iota:U \hookrightarrow \bC^n$ such that $\sigma \sim \iota^*\omega$ on $U^*$ where $\omega$ is the K\"ahler form associated to the Euclidean metric on $\bC^n$. (See \ref{def:gyokosmetrika} for the definition of ``$\sim$''.)
\end{definition}

\begin{remark}\label{rem:volformexistence}
    Such a volume form always exists. One possibility to show this is to glue together volume forms coming from local analytic embeddings using a partition of unity. Another way to see this is to use the fact that any compact complex curve $X$ is projective algebraic, i.e. admits a global analytic embedding $X \hookrightarrow \mathbb{P}^n$. (See e.g. \cite{RuppSera}, Thm 6.2 or \cite{Grauert}, §3, Satz 2.) Now if $\omega_{\text{FS}}$ denotes the Fubini-Study form on $\mathbb{P}^n$, then  $\sigma := \omega_{\text{FS}}\big|_{X^*}$ will be a volume form on $X$ satisfying Definition \ref{def:vol}.
\end{remark}

\noindent Now we are ready to define the central object of this paper, the Bergman space of a singular curve.

\begin{definition}\label{def:singberg}
    Let $X$ be a compact curve with a volume form $\sigma$, a vector bundle $E\ra X$ and a continuous Hermitian metric $h$ on $E$. Given a nonempty  open subset $U \subseteq X$, define its Bergman space to be the vector space
    $$A^2(U,\sigma,E,h) := \Big\{ \, s \in H^0(U,E)  \ \Big| \ \int_{U^*} h(s,s)\sigma < \infty \, \Big\}.$$
\end{definition}

\noindent Let $X$ be a curve with singular locus $\text{Sing}X$ and normalization $\pi : \widetilde{X} \to X$. At every point $p\in X$ the curve $X$ has a local decomposition of the form $(X,p) = \bigcup_i(X_i,p)$ where the germ $(X_i,p)$ is irreducible for all $i$. The multiplicity of the (local) branch $X_i$ at $p$ will be denoted by $\text{m}(X_i,p)$. The preimages $\pi^{-1}(p) = \{\tilde{p}_i\}_i$ correspond bijectively to the set of branches at $p$ via the normalization $\pi:(\widetilde{X},\tilde{p}_i) \to (X_i,p)$. A point $p \in X$ is a smooth point if and only if $X$ has only one (local) branch at $p$ and ${\rm m}(X,p) = 1$. This is also equivalent to $\pi: (\widetilde{X},\tilde{p}) \to (X,p)$ being an isomorphism of germs.

\begin{definition}\label{def:multdiv}
   The multiplicity-divisor of the curve $X$ is defined as
   $$D_\text{m} := \sum_{p\, \in\, \text{Sing}X} \Big( \sum_i \big( \text{m}(X_i,p)-1\big)\cdot \tilde{p}_i \Big) \in \text{Div}(\widetilde{X}).$$
\end{definition}

\begin{prop} \label{prop:projDmvol}
    Let $X$ be a compact and reduced complex analytic curve with volume form $\sigma$ and normalization map $\pi : \widetilde{X} \to X$. Then the form $\rho = \pi^*\sigma$ is a $D_\text{\normalfont m}$-volume form (see Definition \ref{def:gyokosmetrika}) on $\widetilde{X}$.
\end{prop}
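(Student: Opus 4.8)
The statement is local: since $\sigma$ is a positive continuous volume form on $X^*$ and $\pi$ is a biholomorphism away from $\pi^{-1}(\mathrm{Sing}X)$, the pullback $\rho=\pi^*\sigma$ is automatically a positive continuous volume form on $\widetilde X\setminus \pi^{-1}(\mathrm{Sing}X) = \widetilde X\setminus\mathrm{supp}(D_\mathrm{m})$. So the only thing to check is the asymptotic behavior of $\rho$ near a point $\tilde p_i\in\pi^{-1}(p)$ for $p\in\mathrm{Sing}X$, where we must show $\rho\sim c|t|^{2(\mathrm{m}(X_i,p)-1)}\,\tfrac{i\,dt\wedge d\bar t}{2}$ in a local coordinate $t$ centered at $\tilde p_i$. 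The plan is to reduce to the corresponding statement for an \emph{irreducible} curve germ $(X_i,p)\subset(\bC^n,0)$ equipped with the restriction of the Euclidean Kähler form, and then compute the pullback along the Puiseux parametrization.

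First I would invoke Definition~\ref{def:vol}: choose a neighborhood $U$ of $p$ and an analytic embedding $\iota:U\hookrightarrow\bC^n$ with $\sigma\sim\iota^*\omega$ on $U^*$, where $\omega$ is the Euclidean Kähler form. Since $\sim$ is preserved under pullback by the (continuous, finite) map $\pi$, it suffices to establish the claim for $\pi^*\iota^*\omega$ near $\tilde p_i$. By the remark following Definition~\ref{def:intmult}, after a linear change of coordinates on $\bC^n$ the Puiseux parametrization of the branch $(X_i,p)$ takes the form $\varphi(t)=\iota\circ\pi(t)=\big(t^{m},\varphi_2(t),\dots,\varphi_n(t)\big)$ where $m=\mathrm{m}(X_i,p)$ and $\mathrm{ord}(\varphi_j)>m$ for $j\ge 2$. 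Now I would simply compute: $\varphi^*\omega = \varphi^*\big(\tfrac{i}{2}\sum_k dz_k\wedge d\bar z_k\big) = \tfrac{i}{2}\big(\sum_k |\varphi_k'(t)|^2\big)\,dt\wedge d\bar t$. The first term contributes $|m t^{m-1}|^2 = m^2|t|^{2(m-1)}$, while each $\varphi_j'$ for $j\ge 2$ has order $>m-1$, so $|\varphi_j'(t)|^2 = o(|t|^{2(m-1)})$ as $t\to 0$. Hence $\sum_k|\varphi_k'(t)|^2 = m^2|t|^{2(m-1)}\big(1+o(1)\big)$, giving $\varphi^*\omega \sim m^2|t|^{2(m-1)}\,\tfrac{i\,dt\wedge d\bar t}{2}$, which is exactly the required asymptotic with $c_{\tilde p_i}=m^2>0$ and exponent matching the coefficient $\mathrm{m}(X_i,p)-1$ of $\tilde p_i$ in $D_\mathrm{m}$.

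Finally I would assemble the pieces: combining $\sigma\sim\iota^*\omega$ with $\pi^*(\iota^*\omega)\sim m^2|t|^{2(m-1)}\tfrac{i\,dt\wedge d\bar t}{2}$ and transitivity of $\sim$ yields $\rho=\pi^*\sigma\sim m^2|t|^{2(m-1)}\tfrac{i\,dt\wedge d\bar t}{2}$ near each $\tilde p_i$, so $\rho$ satisfies Definition~\ref{def:gyokosmetrika} with the divisor $D_\mathrm{m}$. The only mild subtlety — and the step I'd be most careful about — is checking that the relation $\sim$ (asymptotic equality of volume forms as the point tends to $0$) genuinely behaves well under the operations used: that it is transitive, and that it is stable under pullback by a germ of finite holomorphic map that is a homeomorphism onto its image. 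Both hold because $\sim$ amounts to the ratio of the two density functions tending to $1$, and precomposition by a continuous germ sending $0$ to $0$ preserves limits; I would state this explicitly (perhaps as a one-line lemma or remark) rather than leave it implicit, since it is the glue holding the argument together.
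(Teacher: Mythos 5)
Your proposal is correct and follows essentially the same route as the paper: reduce to a local statement near each preimage $\tilde p_i$ of a singular point, identify the normalization there with the Puiseux parametrization in the normal form $t \mapsto (t^{m},\varphi_2(t),\dots,\varphi_n(t))$ with $\operatorname{ord}(\varphi_j)>m$, and compute the pullback of the Euclidean K\"ahler form — the paper simply outsources this "simple computation" to Ruppenthal--Sera, whereas you carry it out, and your explicit remark that $\sim$ is transitive and stable under pullback by the parametrization is a reasonable thing to spell out. One small point to patch: the linear change of coordinates on $\bC^n$ needed to reach the Puiseux normal form is in general not unitary, so in the new coordinates the Euclidean form reads $\frac{i}{2}\sum_{j,k}h_{jk}\,dz_j\wedge d\bar z_k$ for some constant positive-definite Hermitian matrix $(h_{jk})$ rather than the standard one; running your computation with this matrix gives leading term $h_{11}m^2|t|^{2(m-1)}$ with $h_{11}>0$ (alternatively, skip the normal form altogether and note that $\sum_j|\varphi_j'|^2=c\,|t|^{2(m-1)}(1+o(1))$ with $c>0$ since $\min_j\operatorname{ord}(\varphi_j)=m$), so the stated asymptotics and hence the conclusion are unaffected.
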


\begin{proof}
    The normalization restricts to an isomorphism $\widetilde{X}\setminus \pi^{-1}\text{Sing}X \xrightarrow{\ \cong\ } X^*$ and $\widetilde{X} \setminus \text{supp}(D_\text{m}) \subseteq \widetilde{X}\setminus \pi^{-1}\text{Sing}X$ thus $\rho$ is indeed a volume form on $\widetilde{X}\setminus \text{supp}(D_\text{m})$.

    Locally, around the point $\tilde{p}_{i} \in \widetilde{X}$ the normalization can be identified with the Puiseux parametrization
        $$(\mathbb{C},0)  \xleftrightarrow{\ \, \cong \, \ } (\widetilde{X},\tilde{p}_{i}) \xrightarrow{ \ \, \pi \ \, } (X_i,p) \xhookrightarrow{ \ \ \ \, } (\mathbb{C}^N,o).$$
    In suitable local coordinates it is given by the function $t \mapsto \big(t^{\text{m}(X_i,p)},F(t)\big)$
    where $F : (\bC,0) \to (\bC^{N-1},o)$ is some analytic function with each coordinate function having order strictly greater than $\text{m}(X_i,p)$. A simple computation (see e.g. \cite{RuppSera}, Chap. 3) shows that in this local coordinate system around $\tilde{p}_{i}$ we have
        \begin{equation*} \label{eq:wupp}
            \rho \sim |t|^{2(\text{m}(X_i,p)-1)}\frac{i}{2}dt\wedge d\bar{t}.
        \end{equation*}
        This means precisely that $\rho$ is a $D_\text{m}$-volume form.  
\end{proof} 

In the smooth case, the Bergman space is a Hilbert space. The following theorem states that this will hold in the singular case as well.

\begin{theorem}\label{thm:singhilb}         The vector space $A^2(U,\sigma,E,h )$ is a Hilbert space for any open $U \subseteq X$.
\end{theorem}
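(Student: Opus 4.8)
The plan is to reduce the singular statement to the smooth (Riemann surface) case already handled by Corollary~\ref{cor:simagyokos}, using the normalization map. Let $\pi : \widetilde{X} \to X$ be the normalization and set $\widetilde{U} := \pi^{-1}(U)$, an open subset of the compact Riemann surface $\widetilde{X}$. By Proposition~\ref{prop:projDmvol}, the pullback $\rho := \pi^*\sigma$ is a $D_{\mathrm m}$-volume form on $\widetilde{X}$, and hence a $D_{\mathrm m}|_{\widetilde{U}}$-volume form on $\widetilde{U}$. The pullback bundle $\pi^*E$ with the pulled-back continuous Hermitian metric $\pi^*h$ is a Hermitian holomorphic vector bundle on $\widetilde{X}$.

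The key step is to exhibit a linear isometry $A^2(U,\sigma,E,h) \to A^2(\widetilde{U},\rho,\pi^*E,\pi^*h)$ with closed image. The map is $s \mapsto \pi^*s$. It is an isometry because $\pi$ restricts to a biholomorphism $\widetilde{X}\setminus\pi^{-1}(\mathrm{Sing}X) \xrightarrow{\cong} X^*$, so the change-of-variables formula gives $\int_{\widetilde{U}\setminus\pi^{-1}(\mathrm{Sing}X)} \pi^*h(\pi^*s,\pi^*s)\,\pi^*\sigma = \int_{U^*} h(s,s)\,\sigma$, and the pushed-forward and pulled-back measures ignore the measure-zero sets $\mathrm{Sing}X$ and $\pi^{-1}(\mathrm{Sing}X)$. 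So $\pi^*s$ lies in $L^2$ precisely when $s$ does; in particular $\pi^*$ lands in $A^2(\widetilde{U},\rho,\pi^*E,\pi^*h)$, which is a Hilbert space by Corollary~\ref{cor:simagyokos}. It remains to identify the image of $\pi^*$ and show it is closed. The image consists exactly of those holomorphic sections $\tilde{s} \in H^0(\widetilde{U},\pi^*E)$ that descend to a holomorphic section of $E$ over $U$, i.e. that lie in the subsheaf $\pi^*\mathcal{O}_X(E) \subseteq \pi_*\mathcal{O}_{\widetilde{X}}(\pi^*E)$ evaluated on $U$. Since this is a linear subspace, $A^2(U,\sigma,E,h)$ is isometric to its preimage, a linear subspace of a Hilbert space; to conclude it is itself a Hilbert space we must check this subspace is closed.

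To see closedness, suppose $\tilde{s}_n = \pi^*s_n$ converges in $A^2(\widetilde{U},\rho,\pi^*E,\pi^*h)$ to some $\tilde{s}$. By Lemma~\ref{lem:kiertfolyt} (applied through the isometry $\Phi$ of Theorem~\ref{thm:Phiisom}) $L^2$-convergence implies locally uniform convergence of the $\tilde{s}_n$ on $\widetilde{U}$; equivalently, the $s_n$ converge locally uniformly on $U^*$. Fix a singular point $p \in U$ with branches $(X_i,p)$ and preimages $\tilde{p}_i$. Each $s_n$ is holomorphic at $p$, which by definition of holomorphic functions on the singular curve means the germ of $\pi^*s_n$ at $\tilde{p}_i$ lies in the (finite-codimension, by Remark~\ref{rem:vegeskodim}) submodule $\pi^*\mathcal{O}_{X,p}(E_p)$ of $\mathcal{O}_{\widetilde{X},\tilde{p}_i}(\pi^*E)$; this submodule is cut out by finitely many linear conditions on the Taylor coefficients at the $\tilde{p}_i$, and these conditions are closed under locally uniform limits (since the Taylor coefficients depend continuously on the function in the compact-open topology). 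Hence the limit $\tilde{s}$ also satisfies them, so $\tilde{s} = \pi^*s$ for a unique holomorphic section $s$ of $E$ over $U$, and by the isometry $s \in A^2(U,\sigma,E,h)$ with $\pi^*s = \tilde{s}$. Therefore the image of $\pi^*$ is closed, and $A^2(U,\sigma,E,h)$ inherits the structure of a Hilbert space.

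The main obstacle is the closedness of the image of $\pi^*$, i.e. verifying that the ``descends to a section on the singular curve'' condition is preserved under $L^2$-limits. This hinges on the interplay between three facts: that $L^2$-convergence in these Bergman spaces forces locally uniform convergence (via the continuity of point evaluations established in Lemma~\ref{lem:kiertfolyt} and Theorem~\ref{thm:Phiisom}), that the sheaf inclusion $\pi^*\mathcal{O}_X(E) \hookrightarrow \mathcal{O}_{\widetilde{X}}(\pi^*E)$ has finite codimension at each singular point (Remark~\ref{rem:vegeskodim}), and that this finite-codimension condition is linear in the jets at the preimage points. Once these are assembled the argument is routine, but one should be careful that the metric $h$ is only assumed continuous, so all estimates must be of the ``compare to a unitary frame on a compact neighborhood'' type, exactly as in the proof of Lemma~\ref{lem:kiertfolyt}.
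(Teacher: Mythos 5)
Your proposal follows the paper's route almost exactly: pull back by the normalization, use Proposition \ref{prop:projDmvol} and Corollary \ref{cor:simagyokos} to see that $A^2(\widetilde U,\rho,\pi^*E,\pi^*h)$ is a Hilbert space, and prove that the isometric image $A^2(\widetilde U,\rho,\pi^*E,\pi^*h)\cap\pi^*\mathcal O_X(U,E)$ is closed by upgrading $L^2$-convergence to locally uniform convergence via the continuity of point evaluations (Lemma \ref{lem:kiertfolyt} and Theorem \ref{thm:Phiisom}). You differ only at the last step: the paper pushes the locally uniform convergence down to $X$ and invokes completeness of $\mathcal O_X(V)^{\mathrm{rk}(E)}$ in the compact-open topology (Kaup--Kaup), whereas you stay on $\widetilde X$ and claim that membership of a germ in $\pi^*\mathcal O_{X,p}(E)$ is cut out by finitely many linear conditions on Taylor coefficients at the points $\tilde p_i$.

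That claim is true, but your justification for it is not: finite codimension (Remark \ref{rem:vegeskodim}) alone does not imply that the defining functionals factor through finite jets --- a finite-codimension subspace can be dense, e.g.\ the kernel of a discontinuous functional. The missing ingredient is the conductor ideal: $\pi^*\mathcal O_{X,p}$ contains the conductor, an ideal of finite codimension in $\bigoplus_i\mathcal O_{\widetilde X,\tilde p_i}$, hence contains every germ vanishing to some fixed order $N$ at each $\tilde p_i$; therefore membership really is decided by the $N$-jet, which is a closed condition under locally uniform convergence. With that standard fact supplied, your argument closes the gap and is a legitimate alternative to the paper's appeal to the completeness of $\mathcal O_X$ in the canonical topology.
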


\begin{proof}
    Let $\pi: \widetilde{X}\to X$ denote the normalization, $\rho = \pi^*\sigma$, $\widetilde{U} = \pi^{-1}(U)$, $U^* = U\setminus \text{Sing}X$ and $\widetilde{U}^* = \pi^{-1}(U^*)$. The normalization induces an isometric embedding of inner product spaces
    $$\pi^* : A^2(U,\sigma,E,h) \xrightarrow{ \ \cong \ } A^2(\widetilde{U},\rho, \pi^*E,\pi^*h) \cap \pi^*\mathcal{O}_X(U,E) \xhookrightarrow{ \ \ \ } A^2(\widetilde{U},\rho, \pi^*E,\pi^*h).$$
    Since $\rho$ is a $D_\text{m}$-volume form by Proposition \ref{prop:projDmvol}, Corollary \ref{cor:simagyokos} implies that $A^2(\widetilde{U},\rho, \pi^*E,\pi^*h)$ is a Hilbert space whence all we have to show is that $A^2(\widetilde{U},\rho, \pi^*E,\pi^*h) \cap \pi^*\mathcal{O}_X(U,E)$ is a closed subspace.

    Let $(\tilde{s}_n)_{n\in\mathbb{N}} \subset A^2(\widetilde{U},\rho, \pi^*E,\pi^*h) \cap \pi^*\mathcal{O}_X(U,E)$ be a sequence converging to some section $\tilde{s} \in A^2(\widetilde{U},\rho, \pi^*E,\pi^*h)$ in the $L^2$-topology. Each section $\tilde{s}_n$ is the pullback of some section $s_n \in A^2(U,\sigma,E,h)$ and we need to show that so is $\tilde{s}$. Since $\pi : \widetilde{U}^* \to U^*$ is an isomorphism, there exists a section $s \in A^2(U^*,\sigma,E,h)$ with $s\circ \pi = \tilde{s}\big|_{\widetilde{U}^*}$. We will show that $s$ extends holomorphically to all of $U$.
    Let $p\in \text{Sing}X\cap U$, and choose some neighborhood $V\subset U$ around $p$ such that $\text{Sing}X\cap V=\{p\}$ and $E$ is trivial on $V$. Fix a trivialization. This also induces a trivialization of $\pi^*E$ over $\pi^{-1}(V)=\widetilde{V}\subset  \widetilde{X}$. Under this trivialization, we identify the sections $\tilde{s}_n, \tilde{s}$ (resp. $s_n,s$) with vector valued-functions on $\widetilde{V}$ (resp. $V$). These functions are also all $L^2$ with respect to the restricted volume forms and Hermitian forms. In this situation we can use the evaluation maps $\text{Ev}_{p_i,k}^0$ defined in Lemma \ref{lem:kiertfolyt} on the points $p_i\in \pi^{-1}(p)$. The evaluation maps imply two things. First that for all $p_i,p_j\in \pi^{-1}(p)$ we have $\tilde{s}(p_i)=\tilde{s}(p_j)$, i.e. we can extend $s$ as continuous function. The second implication is that the functions $\tilde{s}_n$ converge locally uniformly to $\tilde{s}$ and since $\pi$ is a proper map we also see that the functions $s_n$ converge locally uniformly to $s$.
    On the vector space $\mathcal{O}_X(V)^{\text{rk}(E)}$ the topology of locally uniform convergence coincides with the canonical topology (see \cite{kaupkaup}, Def. 55.1 and E.55j) which is a complete topology. Hence the limit function $s$ is in fact holomorphic on all of $U$. This completes the argument.
\end{proof}

\begin{definition}\label{def:thesheaf}
     Let $X$ be a complex curve with normalization $\pi : \widetilde{X} \to X$ and multiplicity-divisor $D_\text{\normalfont m} \in \text{\normalfont Div}(\widetilde{X})$. Given a subset $B \subseteq X$, define the sheaf $\mathcal{P}_X^B$ by setting
     $$\mathcal{P}_X^B(U) := \{ \, f\in \mathcal{O}_X(U \setminus (\text{Sing}X \cap B)) \ | \ \pi^*f \in \mathcal{M}_{\widetilde X}(\widetilde{U}) \, , \ \text{\normalfont div}(\pi^*f) +D_\text{\normalfont m}\big|_{\widetilde{U}} \geq 0 \, \}$$
     for any open subset $U \subset X$, where $\widetilde{U}:=\pi^{-1}(U)$.
\end{definition}

\noindent The notation $\mathcal{P}_X$ originates in the fact that the sheaf is defined on a $\mathcal{P}$rojective curve $X$. This is to distinguish it from its very similarly defined affine analogue, c.f. Definition \ref{def:theaffsheaf}.

\begin{prop}\label{prop:coherence}
    The sheaf $\mathcal{P}_X^B$ is a coherent analytic sheaf.
\end{prop}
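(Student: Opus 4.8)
The plan is to show that $\mathcal{P}_X^B$ is locally isomorphic to a pushforward of a line bundle under a finite map, which is automatically coherent by Grauert's direct image theorem. First I would observe that coherence is a local question on $X$, so it suffices to work in a small neighborhood $V$ of an arbitrary point $p \in X$. Away from $\text{Sing}X \cap B$ the sheaf $\mathcal{P}_X^B$ agrees with $\mathcal{O}_X$ (since there $\pi$ is a biholomorphism and $D_\text{m}$ has no support there, the condition $\text{div}(\pi^*f) + D_\text{m} \geq 0$ just says $f$ is holomorphic), so $\mathcal{O}_X$ being coherent handles those points; hence I may assume $p \in \text{Sing}X \cap B$ and shrink $V$ so that $V \cap \text{Sing}X = \{p\}$ and $V \cap B \subseteq \{p\}$ is allowed (more precisely shrink so $\text{Sing}X\cap B \cap V = \{p\}$).

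Next I would identify the sections over such a $V$ concretely. Let $\pi^{-1}(V) = \widetilde{V}$ with points $\tilde{p}_1,\dots,\tilde{p}_r$ lying over $p$, and let $D_\text{m}\big|_{\widetilde{V}} = \sum_i (\text{m}(X_i,p)-1)\tilde{p}_i$. Then $\mathcal{P}_X^B(V)$ consists of those $f \in \mathcal{O}_X(V\setminus\{p\})$ whose pullback $\pi^*f$ extends meromorphically across each $\tilde{p}_i$ with pole order at most $\text{m}(X_i,p)-1$. Equivalently, $\mathcal{P}_X^B\big|_V = \pi_*\big(\mathcal{O}_{\widetilde{X}}(D_\text{m})\big)\big|_V$ restricted along sections coming from $V\setminus\{p\}$ — but actually the cleanest formulation is: $\mathcal{P}_X^B$ is the subsheaf of $j_*\mathcal{O}_{X\setminus(\text{Sing}X\cap B)}$ (where $j$ is the inclusion) consisting of functions whose pullbacks lie in $\pi_*\mathcal{O}_{\widetilde{X}}(D_\text{m})$. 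I would then argue that $\mathcal{P}_X^B = \pi^B_*\big(\mathcal{O}_{X^B}(\widehat{D}_\text{m})\big)$ for an appropriate line bundle on the \emph{partial} normalization $\pi^B : X^B \to X$ of $X$ along $B$ (Remark \ref{rem:partialnorm}), where $\widehat{D}_\text{m}$ is the divisor on $X^B$ obtained from $D_\text{m}$ by pushing it down through $\widetilde{X}\to X^B$ — since along $B$ the partial normalization \emph{is} the full normalization, the stalk at $p$ is exactly the set of functions with the prescribed pole orders, and away from $\text{Sing}X\cap B$ it is just $\mathcal{O}_X$, matching the definition on both pieces.

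Having expressed $\mathcal{P}_X^B$ as a finite pushforward of a coherent (indeed locally free) sheaf, coherence follows from Grauert's direct image theorem (or, since we are one-dimensional and the map is finite, from the elementary fact that finite pushforwards of coherent sheaves are coherent). Alternatively, and perhaps more self-containedly, I would exhibit an explicit finite presentation of the stalk $(\mathcal{P}_X^B)_p$ as an $\mathcal{O}_{X,p}$-module: it is a finitely generated $\mathcal{O}_{X,p}$-submodule of the total quotient ring (in fact sandwiched between $\mathcal{O}_{X,p}$ and $\pi_*\mathcal{O}_{\widetilde{X},p}$ up to the bounded pole orders, which is module-finite over $\mathcal{O}_{X,p}$ since $\pi$ is finite), and a finitely generated submodule of a coherent sheaf over a Noetherian-stalk space like $\mathcal{O}_{X,p}$ is coherent.

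The main obstacle I anticipate is bookkeeping the interplay between the \emph{partial} normalization along $B$ and the divisor $D_\text{m}$, which a priori lives on the \emph{full} normalization $\widetilde{X}$: at a singular point $p \in \text{Sing}X$ with $p \notin B$, the condition in Definition \ref{def:thesheaf} still involves $\text{div}(\pi^*f) + D_\text{m}\big|_{\widetilde{U}} \geq 0$ even though $f$ is required to be holomorphic (not just defined off $p$) at $p$ — so one must check that for $f \in \mathcal{O}_{X,p}$ the pullback automatically satisfies the divisor inequality, which it does since $\pi^*f$ is then holomorphic and $D_\text{m}$ is effective. Making sure this edge case is consistent, and verifying that the two local descriptions (on $\text{Sing}X\cap B$ versus elsewhere) glue to a single coherent sheaf, is the only genuinely delicate point; the rest is a direct application of standard coherence theorems.
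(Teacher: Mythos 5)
Your proposal takes essentially the same route as the paper: it realizes $\mathcal{P}_X^B$ as the pushforward under the finite partial normalization $\pi^B : X^B \to X$ of a line bundle of the form $\mathcal{O}_{X^B}(D_\text{m}^B)$, with coherence then following from finiteness of $\pi^B$, and it correctly notes both why the partial (rather than full) normalization is needed and why the divisor condition is automatic at singular points outside $B$. The only point to state explicitly is that the divisor pushed down to $X^B$ must be $D_\text{m}\big|_{\pi^{-1}(\text{Sing}X\cap B)}$ only (discarding the part of $D_\text{m}$ over singular points not in $B$, which is harmless since the condition is automatic there), so that its support avoids $\text{Sing}(X^B)$ and $\mathcal{O}_{X^B}(D_\text{m}^B)$ is genuinely a line bundle --- exactly the paper's definition of $D_\text{m}^B$.
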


\begin{proof}
    Let $\pi^B : X^B \to X$ denote the partial normalization of $X$ at the points of $\text{Sing}X \cap B$ and let $\nu^B : \widetilde{X} \to X^B$ be the (total) normalization. They fit into a commutative triangle
    \[\begin{tikzcd}
	{X^B} & {\widetilde{X}} \\
	X
	\arrow["{\pi^B}"', from=1-1, to=2-1]
	\arrow["{\nu^B}"', from=1-2, to=1-1]
	\arrow["\pi", from=1-2, to=2-1]
    \end{tikzcd}.\]
    Define the divisor $D_\text{m}^B = \nu^B_*\big(D_\text{m}\big|_{\pi^{-1}(\text{Sing}X \cap B)}\big) \in \text{Div}(X^B)$. This makes sense, as $\nu^B$ is bijective when restricted to $\pi^{-1}(\text{Sing}X\cap B)$. Since $\text{supp}(D_\text{m}^B) \cap \text{Sing}(X^B) = \emptyset$, the sheaf $\mathcal{O}_{X^B}(D_\text{m}^B)$ is a well-defined holomorphic line bundle over $X^B$. It follows that $\pi^B_*\mathcal{O}_{X^B}(D_\text{m}^B)$ is a coherent analytic sheaf over $X$. Finally, one checks that
    $\pi^B_*\mathcal{O}_{X^B}(D_\text{m}^B) = \mathcal{P}_X^B$.
\end{proof}

\noindent The use of the space $X^B$ in the proof of Proposition \ref{prop:coherence} is necessary as it is possible that $\pi_*\mathcal{O}_{\widetilde{X}}(D_\text{m}\big|_{\pi^{-1}(\text{Sing}X \cap B)}) \supsetneq \pi^B_*\mathcal{O}_{X^B}(D_\text{m}^B)$ is a proper subsheaf. It is clear from Definition \ref{def:thesheaf} that 
    $$H^0(X,\mathcal{P}_X^{\partial U}\otimes E) \subseteq A^2(U,\sigma, E, h ).$$
    The main theorem of this section (Theorem \ref{thm:projmain} below) essentially states that the codimension of this inclusion can only attain two values: $0$ and $\infty$.

\begin{theorem}\label{thm:projmain}
    Let $X$ be a compact curve with a volume form $\sigma$, a vector bundle $E\ra X$ and a continuous Hermitian metric $h$ on $E$. Then the following are equivalent for any open subset $U \subseteq X$.
    \begin{itemize}
        \item[(a)] $\dim_\bC A^2(U,\sigma, E, h) < \infty$.
        \item[(b)] The set $X\setminus U$ is locally polar.
        \item[(c)] $A^2(U,\sigma, E, h) = H^0(X,\mathcal{P}_X^{\partial U} \otimes E)$.
    \end{itemize}
\end{theorem}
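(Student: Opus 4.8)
The plan is to reduce everything to Theorem \ref{thm:gyokos} via the normalization $\pi : \widetilde{X} \to X$. First I would fix the $D_\text{m}$-volume form $\rho := \pi^*\sigma$ on $\widetilde{X}$ (Proposition \ref{prop:projDmvol}), set $\widetilde{U} := \pi^{-1}(U)$, and recall the isometric embedding from the proof of Theorem \ref{thm:singhilb}:
$$\pi^* : A^2(U,\sigma,E,h) \xrightarrow{\ \cong\ } A^2(\widetilde{U},\rho,\pi^*E,\pi^*h) \cap \pi^*\mathcal{O}_X(U,E) \hookrightarrow A^2(\widetilde{U},\rho,\pi^*E,\pi^*h).$$
The key geometric point to verify is that $\partial\widetilde{U} = \pi^{-1}(\partial U)$ (using that $\pi$ is a proper surjection which is a local homeomorphism off a discrete set), and hence, since $D_\text{m}$ is effective (so $(D_\text{m})_- = 0$ and $(D_\text{m})_+ = D_\text{m}$), the divisor $D_{\widetilde{U}}$ of \eqref{E:DUdivizor} associated to the pair $(\widetilde{U},D_\text{m})$ is exactly $D_\text{m}\big|_{\pi^{-1}(\text{Sing}X\cap\partial U)}$, i.e. $L_{D_{\widetilde U}} = \mathcal{O}_{\widetilde X}\big(D_\text{m}\big|_{\pi^{-1}(\text{Sing}X\cap\partial U)}\big)$.

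Next I would establish $(a) \Leftrightarrow (b)$. For $(b) \Rightarrow (a)$: if $X\setminus U$ is locally polar then by Theorem \ref{thm:loc-polar-equiv} the set $\widetilde{X}\setminus\widetilde{U} = \pi^{-1}(X\setminus U)$ is locally polar, so Theorem \ref{thm:gyokos} gives $\dim_\bC A^2(\widetilde{U},\rho,\pi^*E,\pi^*h) < \infty$, and the isometric embedding above forces $\dim_\bC A^2(U,\sigma,E,h) < \infty$. For $(a) \Rightarrow (b)$: here the subtlety is that finite-dimensionality of the \emph{subspace} $A^2(\widetilde{U},\rho,\pi^*E,\pi^*h)\cap\pi^*\mathcal{O}_X(U,E)$ does not a priori bound the ambient space. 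I would get around this by invoking Remark \ref{rem:vegeskodim}: since $\pi^*\mathcal{O}_X(U,E) \hookrightarrow \mathcal{O}_{\widetilde X}(\widetilde{U},\pi^*E)$ has finite codimension when $\text{Sing}X\cap U$ is finite --- and one reduces to this case because if $\text{Sing}X\cap U$ is infinite then it has an accumulation point, $U$ contains arbitrarily small punctured-disk-worth of branches and one can manufacture infinitely many independent $L^2$ holomorphic functions directly, contradicting $(a)$. With $\text{Sing}X\cap U$ finite, the intersection $A^2(\widetilde{U},\rho,\pi^*E,\pi^*h)\cap\pi^*\mathcal{O}_X(U,E)$ has finite codimension in $A^2(\widetilde{U},\rho,\pi^*E,\pi^*h)$, so $(a)$ forces the latter to be finite-dimensional; then Theorem \ref{thm:gyokos} applied on $\widetilde{X}$ gives $\widetilde{X}\setminus\widetilde{U}$ locally polar, and Theorem \ref{thm:loc-polar-equiv} returns $X\setminus U$ locally polar.

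Finally I would prove $(b) \Rightarrow (c)$; the inclusion $H^0(X,\mathcal{P}_X^{\partial U}\otimes E) \subseteq A^2(U,\sigma,E,h)$ is already noted, so only the reverse inclusion needs work. Take $s \in A^2(U,\sigma,E,h)$; then $\pi^* s \in A^2(\widetilde U,\rho,\pi^*E,\pi^*h)$, and by $(b) \Rightarrow (c)$ of Theorem \ref{thm:gyokos} (using $\widetilde{X}\setminus\widetilde{U}$ locally polar and the identification $D_{\widetilde U} = D_\text{m}\big|_{\pi^{-1}(\text{Sing}X\cap\partial U)}$) we get that $\pi^*s$ extends to a meromorphic section of $\pi^*E$ on $\widetilde{X}$ with $\text{div}(\pi^*s) + D_\text{m}\big|_{\pi^{-1}(\text{Sing}X\cap\partial U)} \geq 0$. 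Since $\text{div}(\pi^*s) \geq 0$ over $\pi^{-1}(\text{Sing}X\cap(U\setminus\partial U))$ anyway (there $s$ is already a genuine holomorphic section of $E$, hence $\pi^*s$ is holomorphic), this says exactly that, locally over each open $W\subseteq X$, the section $s$ lands in $\mathcal{P}_X^{\partial U}(W)\otimes E(W)$ --- here I would unwind Definition \ref{def:thesheaf} together with the identity $\pi^B_*\mathcal{O}_{X^B}(D_\text{m}^B) = \mathcal{P}_X^B$ from the proof of Proposition \ref{prop:coherence} with $B = \partial U$. Globalizing, $s \in H^0(X,\mathcal{P}_X^{\partial U}\otimes E)$. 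The implication $(c) \Rightarrow (a)$ is then immediate: $\mathcal{P}_X^{\partial U}\otimes E$ is coherent by Proposition \ref{prop:coherence} and $X$ is compact, so $H^0(X,\mathcal{P}_X^{\partial U}\otimes E)$ is finite-dimensional. I expect the main obstacle to be the bookkeeping in $(b)\Rightarrow(c)$: carefully matching the divisor $D_{\widetilde U}$ coming out of Theorem \ref{thm:gyokos} on the normalization with the sheaf $\mathcal{P}_X^{\partial U}$ defined downstairs on $X$, in particular verifying the pushforward identity along the partial normalization $\pi^{\partial U}$ so that meromorphic extension on $\widetilde X$ translates into membership in the coherent sheaf on $X$, and handling branch points where several $\tilde p_i$ lie over one $p$.
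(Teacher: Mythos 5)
Your proposal follows essentially the same route as the paper: pull back along the normalization, view $\pi^*\sigma$ as a $D_\text{m}$-volume form (Proposition \ref{prop:projDmvol}), transfer finite-dimensionality through the isometric embedding together with the finite-codimension statement of Remark \ref{rem:vegeskodim}, apply Theorem \ref{thm:gyokos} on $\widetilde X$, and translate local polarity via Theorem \ref{thm:loc-polar-equiv}; this is exactly the paper's proof of (a) $\Leftrightarrow$ (b). The only organizational difference is in (c): the paper proves (a) $\Rightarrow$ (c) in two steps (Lemma \ref{lem:singext}, extending $L^2$-sections over $X\setminus(\text{Sing}X\cap\partial U)$, and Lemma \ref{lem:main}, the special case $U=X\setminus S$ with $S\subseteq\text{Sing}X$), whereas you prove (b) $\Rightarrow$ (c) in one pass by invoking Theorem \ref{thm:gyokos}(c) upstairs and pushing down through Definition \ref{def:thesheaf}; the two routes are equivalent and yours is slightly more direct, at the cost of the divisor/sheaf bookkeeping you yourself flag.

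Two points need repair, though neither is fatal. First, your ``key geometric point'' $\partial\widetilde U=\pi^{-1}(\partial U)$ is false in general, and the offered justification (properness plus local homeomorphism off a discrete set) cannot give it: for a curve with a node at $p$ and $U$ a small punctured neighbourhood of $p$ contained in a single branch, $\pi^{-1}(\partial U)$ contains both preimages of $p$ while $\partial\widetilde U$ contains only one. What saves your argument is that you only use the identification under hypothesis (b): then $X\setminus U$ is locally polar, hence so is $\widetilde X\setminus\widetilde U=\pi^{-1}(X\setminus U)$ by Theorem \ref{thm:loc-polar-equiv}, and locally polar sets are nowhere dense, so $\widetilde U$ is dense and $\partial\widetilde U=\widetilde X\setminus\widetilde U=\pi^{-1}(X\setminus U)=\pi^{-1}(\partial U)$. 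Argue it this way; the identification $D_{\widetilde U}=D_\text{m}\big|_{\pi^{-1}(\text{Sing}X\cap\partial U)}$ then follows as you state, and the inequality $D_{\widetilde U}\le D_\text{m}$ gives the divisor condition required for membership in $\mathcal{P}_X^{\partial U}$. Second, the detour about $\text{Sing}X\cap U$ being infinite is unnecessary: $\text{Sing}X$ is a discrete analytic subset of the compact curve $X$, hence finite, so Remark \ref{rem:vegeskodim} applies directly; as written, the claim that one could ``manufacture infinitely many independent $L^2$ holomorphic functions'' is unsubstantiated, but it is moot. With these corrections (and noting that $s$ extends across the smooth points of $\partial U$ because $\pi$ is biholomorphic there and $\text{div}(\pi^*s)\ge 0$ off $\text{supp}(D_{\widetilde U})$), your argument is complete and matches the paper's conclusion.
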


\noindent Compare to \cite{Sz22} and \cite{GGV22} in the special case where $X$ is smooth. Notice also that (b) is independent of both $\sigma$ and $h$. It follows that if (a) holds for \emph{some} form $\sigma$ and \emph{some} metric $h$ then both (a) and (c) hold for \emph{any} form $\sigma$ and \emph{any} metric $h$.

Theorem \ref{thm:projmain} will be proved through several lemmas below.

\begin{lemma}\label{lem:uaveges}
    $A^2(U,\sigma,E,h)$ is finite-dimensional if and only if $A^2(\widetilde{U},\pi^*\sigma,\pi^*E,\pi^*h)$
    is finite-dimensional.
\end{lemma}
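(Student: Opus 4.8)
The plan is to compare the Bergman space $A^2(U,\sigma,E,h)$ on the (possibly singular) curve $X$ with the Bergman space $A^2(\widetilde{U},\pi^*\sigma,\pi^*E,\pi^*h)$ on the smooth normalization via the pullback map $\pi^*$. The key observation, already exploited in the proof of Theorem \ref{thm:singhilb}, is that $\pi^*$ realizes $A^2(U,\sigma,E,h)$ as a subspace of $A^2(\widetilde{U},\pi^*\sigma,\pi^*E,\pi^*h)$, namely the subspace $A^2(\widetilde{U},\rho,\pi^*E,\pi^*h) \cap \pi^*\mathcal{O}_X(U,E)$ where $\rho = \pi^*\sigma$ is a $D_\text{m}$-volume form by Proposition \ref{prop:projDmvol}. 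Since $\pi^*$ is an isometry onto this subspace, the ``only if'' direction is immediate: if the larger space on $\widetilde{U}$ is finite-dimensional then so is the subspace, hence so is $A^2(U,\sigma,E,h)$.

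For the ``if'' direction I would argue that the codimension of $\pi^*A^2(U,\sigma,E,h)$ inside $A^2(\widetilde{U},\rho,\pi^*E,\pi^*h)$ is finite. First I would reduce to the case where $U$ has compact closure or, more simply, observe that the obstruction to a section $\tilde{s} \in A^2(\widetilde{U},\rho,\pi^*E,\pi^*h)$ descending to $U$ is purely local at the finitely many singular points of $X$ lying in $U$ (note $\text{Sing}X$ is discrete and $X$ compact, so $\text{Sing}X \cap U$ is finite). At each such point $p$, with branches corresponding to $\tilde{p}_1,\dots,\tilde{p}_r \in \pi^{-1}(p)$, a section of $\pi^*E$ over a neighborhood $\widetilde{V}$ of the $\tilde{p}_i$ descends to a section of $E$ over $V$ precisely when it lies in $\pi^*\mathcal{O}_X(V,E)$, and by Remark \ref{rem:vegeskodim} (the tensored short exact sequence $0 \to \mathcal{O}_X \to \pi_*\mathcal{O}_{\widetilde{X}} \to \pi_*\mathcal{O}_{\widetilde{X}}/\mathcal{O}_X \to 0$ with $E$) the inclusion $\pi^*\mathcal{O}_X(V,E) \hookrightarrow \mathcal{O}_{\widetilde{X}}(\widetilde{V},\pi^*E)$ has finite codimension, equal to $\delta(X,p)\cdot\text{rk}(E)$ summed over the singular points. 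Summing over the finitely many singular points in $U$, the codimension of $\pi^*A^2(U,\sigma,E,h)$ in $A^2(\widetilde{U},\rho,\pi^*E,\pi^*h)$ is at most $\sum_{p \in \text{Sing}X \cap U}\delta(X,p)\cdot\text{rk}(E) < \infty$. Hence if $A^2(U,\sigma,E,h)$ is finite-dimensional, then $A^2(\widetilde{U},\rho,\pi^*E,\pi^*h)$ is finite-dimensional as well.

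The one technical point requiring care is that the finite-codimension statement of Remark \ref{rem:vegeskodim} is phrased for sections over a Stein open set (or a stalk), whereas here $\widetilde{V}$ can be taken Stein but one must make sure the $L^2$ condition does not interfere: any $L^2$ section on $\widetilde{U}$ whose image in the finite-dimensional quotient $\bigoplus_p (\pi_*\mathcal{O}_{\widetilde{X}}/\mathcal{O}_X)_p^{\text{rk}(E)}$ vanishes automatically lies in $\pi^*\mathcal{O}_X(U,E)$ and is therefore $L^2$ with respect to $\sigma$ on $U^*$ by the isometry \eqref{eq:isometry}-type identity $\int_{U^*}h(s,s)\sigma = \int_{\widetilde{U}^*}\pi^*h(\pi^*s,\pi^*s)\rho$. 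So the kernel of the (finite-rank) composite map $A^2(\widetilde{U},\rho,\pi^*E,\pi^*h) \to \bigoplus_p(\pi_*\mathcal{O}_{\widetilde{X}}/\mathcal{O}_X)_p^{\text{rk}(E)}$ is exactly $\pi^*A^2(U,\sigma,E,h)$, giving the desired finite codimension. I expect the main obstacle to be bookkeeping — carefully checking that the local quotient at $p$ is correctly identified and that no $L^2$-integrability subtlety arises at the branch points — rather than any deep analytic input; the analytic heavy lifting (Hilbert space structure, removability of $L^2$ singularities) has already been done in Corollary \ref{cor:simagyokos} and Theorem \ref{thm:singhilb}.
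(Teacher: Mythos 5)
Your proposal is correct and follows essentially the same route as the paper: the paper's proof consists precisely of the isometric embedding $\pi^* : A^2(U,\sigma,E,h) \cong A^2(\widetilde{U},\rho,\pi^*E,\pi^*h)\cap\pi^*\mathcal{O}_X(U,E) \hookrightarrow A^2(\widetilde{U},\rho,\pi^*E,\pi^*h)$ followed by an appeal to Remark \ref{rem:vegeskodim} for the finite codimension of $\pi^*\mathcal{O}(U,E)$ in $\mathcal{O}(\widetilde{U},\pi^*E)$, which is exactly your argument with the local bookkeeping at the singular points spelled out. (Only a cosmetic remark: your labels ``if'' and ``only if'' are swapped relative to the statement, but the mathematics is unaffected.)
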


\begin{proof}
    The normalization induces an isometric embedding of inner product spaces
    $$\pi^* : A^2(U,\sigma,E,h) \xrightarrow{ \ \cong \ } A^2(\widetilde{U},\rho, \pi^*E,\pi^*h) \cap \pi^*\mathcal{O}_X(U,E) \xhookrightarrow{ \ \ \ } A^2(\widetilde{U},\rho, \pi^*E,\pi^*h).$$
    The lemma now follows from Remark \ref{rem:vegeskodim}.
\end{proof}

\begin{lemma}\label{lem:singext}
    Let $(X,\sigma)$, and $(E,h) $ be as before. Let $U \subseteq X$ be an open subset such that $A^2(U,\sigma,E,h)$ is finite-dimensional. Then
        $$A^2(U,\sigma,E,h) = A^2(X \setminus (\text{\normalfont Sing}X\cap \partial U),\sigma,E,h).$$
\end{lemma}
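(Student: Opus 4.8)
The inclusion ``$\supseteq$'' (in the sense of restriction) is immediate: since $\partial U\cap U=\emptyset$ we have $U\subseteq X\setminus(\mathrm{Sing}X\cap\partial U)$, and $\big(X\setminus(\mathrm{Sing}X\cap\partial U)\big)^*=X^*$, so restriction to $U$ does not increase the $L^2$-norm of a section. The substance of the lemma is the reverse inclusion, i.e.\ that every $s\in A^2(U,\sigma,E,h)$ extends holomorphically to $X\setminus(\mathrm{Sing}X\cap\partial U)$ while keeping its finite $L^2$-norm.

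First I would pass to the normalization in order to locate $X\setminus U$. By Lemma \ref{lem:uaveges} the hypothesis forces $A^2(\widetilde U,\pi^*\sigma,\pi^*E,\pi^*h)$ to be finite-dimensional, where $\widetilde U=\pi^{-1}(U)$. Since $\pi^*\sigma$ is a $D_\mathbf{m}$-volume form (Proposition \ref{prop:projDmvol}), Theorem \ref{thm:gyokos} --- applied componentwise to the compact Riemann surface $\widetilde X$ --- yields that $\widetilde X\setminus\widetilde U=\pi^{-1}(X\setminus U)$ is locally polar. By Theorem \ref{thm:loc-polar-equiv} the set $X\setminus U$ is then locally polar, hence closed and nowhere dense; consequently $\overline U=X$ and $X\setminus U=\partial U$. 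In particular $U$ is dense in $X$, so any holomorphic extension of a section of $E$ from $U$ is unique.

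Now the key observation is that
$$\big(X\setminus(\mathrm{Sing}X\cap\partial U)\big)\setminus U \;=\; \partial U\setminus\mathrm{Sing}X \;=\; \partial U\cap X^*,$$
so we only have to extend $s$ across \emph{smooth} boundary points of $U$, where the singular structure of $X$ plays no role. Fix $p\in\partial U\cap X^*$ and a coordinate neighborhood $V\cong\Delta$ of $p$ with $V\cap\mathrm{Sing}X=\emptyset$ and $E|_V$ trivial. Then $\sigma|_V$ is a genuine positive continuous volume form, $V\setminus U=V\cap\partial U$ is locally polar in a disk and hence polar, and $U\cap V=V\setminus(V\cap\partial U)$. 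In a unitary frame of $E|_V$ the section $s|_{U\cap V}$ becomes a tuple of holomorphic functions that are square-integrable against $\sigma|_V$ (hence against Lebesgue measure) on the complement of a polar set, so by Proposition 2.1 in \cite{Sz22} --- exactly as in the step $(b)\Rightarrow(c)$ of the proof of Theorem \ref{thm:gyokos} --- each of them extends holomorphically across $p$. The resulting local extensions agree with $s$ on the dense open sets $U\cap V$, hence agree on overlaps, so they glue to a holomorphic section $\widehat s\in H^0\big(X\setminus(\mathrm{Sing}X\cap\partial U),E\big)$ with $\widehat s|_U=s$. Since $\partial U$ is locally polar it is $\sigma$-null, whence $\int_{X^*}h(\widehat s,\widehat s)\,\sigma=\int_{U^*}h(s,s)\,\sigma<\infty$ and $\widehat s\in A^2\big(X\setminus(\mathrm{Sing}X\cap\partial U),\sigma,E,h\big)$, restricting isometrically to $s$; together with the uniqueness of holomorphic extensions from the dense set $U$ this identifies the two Hilbert spaces. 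There is no real obstacle here beyond assembling these pieces; the only point requiring a little care is the reduction to smooth boundary points in the last paragraph, which is precisely what allows the extension argument of Theorem \ref{thm:gyokos} to be reused without having to cope with the singularities of $X$ on $\partial U$ --- the points that are excluded on the right-hand side.
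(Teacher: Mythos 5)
Your proof is correct and takes essentially the same route as the paper's: both derive local polarity of the complement from the finite-dimensionality hypothesis via Lemma \ref{lem:uaveges} and Theorem \ref{thm:gyokos}, and then extend $s$ across the boundary points away from $\mathrm{Sing}X$ using Proposition 2.1 of \cite{Sz22}; the only difference --- the paper extends $\pi^*s$ on the normalization $\widetilde X$ and descends via $\pi$, whereas you extend directly at smooth points of $X$ --- is immaterial, since $\pi$ is a biholomorphism near those points. One phrasing slip: for a merely continuous metric $h$ a unitary frame of $E|_V$ need not be holomorphic, so to obtain holomorphic component functions you should instead take a holomorphic frame and compare $h$ with the associated flat metric on compact subsets, exactly as in the proof of Lemma \ref{lem:kiertfolyt}.
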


\begin{proof}
    Let $s \in A^2(U,\sigma,E,h)$. We will show that $s$ extends holomorphically to all of $X \setminus(\text{\normalfont Sing}X\cap \partial U)$. As before, we will use the notations $\pi : \widetilde{X} \to X$ for the normalization, $\widetilde{U} = \pi^{-1}(U)$ and $\rho = \pi^*\sigma$. By the assumption and the previous Lemma it follows that $A^2(\widetilde{U},\rho,\pi^*E, \pi^*h)$ is finite-dimensional. Theorem \ref{thm:gyokos} then implies that the set $\widetilde{X}\setminus \widetilde{U}$ is locally polar. Hence, by (\cite[Proposition 2.1]{Sz22}, c.f. also \cite{S82}) the section $\tilde{s} = \pi^*s$ extends holomorphically over $\tilde{X} \setminus (\pi^{-1}(\text{Sing}X)\setminus\widetilde{U})  = \widetilde{X} \setminus (\pi^{-1}(\text{Sing}X)\cap \partial \widetilde{U})$. Since $\pi$ is biholomorphic on the set $\widetilde{X}\setminus \pi^{-1}(\text{Sing}X)$, the extension of $\tilde{s}$ yields a holomorphic extension of $s$ to $X \setminus (\text{Sing}X\cap \partial U)$.
\end{proof}

\begin{lemma}\label{lem:main}
    Let $X$ be a compact, connected and reduced complex analytic curve with volume form $\sigma$, normalization $\pi: \widetilde{X} \to X$ and multiplicity-divisor $D_\text{\normalfont m} \in \text{\normalfont Div}(\widetilde{X})$. Let $E\ra X$ be a holomorphic vector bundle with continuous Hermitian form $h$. Given a subset of singular points $S \subseteq \text{\normalfont Sing}X$, let $U = X \setminus S$. Then
    $$A^2(U,\sigma,E,h) = H^0(X,\mathcal{P}_X^{\partial U} \otimes E ).$$
\end{lemma}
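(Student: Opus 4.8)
The plan is to transfer the question to the normalization $\pi:\widetilde X\to X$, where Theorem \ref{thm:gyokos} applies directly, and then descend the resulting identification back to $X$ by unwinding Definition \ref{def:thesheaf}.

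First I would fix notation. As $X$ is compact and reduced, $\text{Sing}X$ is finite, hence so is $S$; therefore $U=X\setminus S$ is dense in $X$, so $\partial U=X\setminus U=S$ and $\mathcal P_X^{\partial U}=\mathcal P_X^{S}$. Set $\widetilde U=\pi^{-1}(U)$, $\rho=\pi^*\sigma$ and $D':=D_\text{m}\big|_{\pi^{-1}(S)}$. By Proposition \ref{prop:projDmvol}, $\rho$ is a $D_\text{m}$-volume form on $\widetilde X$. Since $D_\text{m}$ is effective, $(D_\text{m})_-=0$, and since $\partial\widetilde U=\pi^{-1}(S)$, the divisor $(D_\text{m})_{\widetilde U}$ appearing in \eqref{E:DUdivizor} is exactly $D'$.

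Next I would apply Theorem \ref{thm:gyokos} to the data $(\widetilde X,D_\text{m},\rho,\pi^*E,\pi^*h)$ and the open set $\widetilde U$ (componentwise, in case $\widetilde X$ is disconnected). Its hypothesis (b) holds, since $\widetilde X\setminus\widetilde U=\pi^{-1}(S)$ is a finite, hence locally polar, set; so conclusion (c) gives
$$A^2(\widetilde U,\rho,\pi^*E,\pi^*h)=H^0\big(\widetilde X,\pi^*E\otimes L_{D'}\big)=\big\{\,\tilde s\in\mathcal M(\widetilde X,\pi^*E)\ \big|\ \text{div}(\tilde s)+D'\ge0\,\big\}.$$
On the other hand, the isometric embedding built in the proof of Theorem \ref{thm:singhilb} (cf. also Lemma \ref{lem:uaveges}) identifies $A^2(U,\sigma,E,h)$, via $s\mapsto s\circ\pi$, with $A^2(\widetilde U,\rho,\pi^*E,\pi^*h)\cap\pi^*\mathcal O_X(U,E)$. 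Combining these, $\pi^*$ identifies $A^2(U,\sigma,E,h)$ with the set of those meromorphic sections $\tilde s$ of $\pi^*E$ on $\widetilde X$ with $\text{div}(\tilde s)+D'\ge0$ whose restriction to $\widetilde U$ has the form $s\circ\pi$ for some $s\in\mathcal O_X(U,E)$.

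Finally I would match this last set with $H^0(X,\mathcal P_X^{S}\otimes E)$. Working componentwise in a local trivialization of $E$ and using that meromorphic continuation from the dense open set $\widetilde U$ is unique, Definition \ref{def:thesheaf} tells us that a global section of $\mathcal P_X^{S}\otimes E$ is a holomorphic section $s$ of $E$ over $X\setminus(\text{Sing}X\cap S)=U$ such that $\pi^*s$ extends to a meromorphic section of $\pi^*E$ on $\widetilde X$ with $\text{div}(\pi^*s)+D_\text{m}\ge0$; for such $s$, which is holomorphic at every singular point of $X$ lying in $U$, the inequality is automatic at the points of $\text{supp}(D_\text{m})\setminus\pi^{-1}(S)$ (because $D_\text{m}$ is effective), so the condition is equivalent to $\text{div}(\pi^*s)+D'\ge0$. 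Hence $\pi^*\big(H^0(X,\mathcal P_X^{S}\otimes E)\big)$ coincides with the set found in the previous paragraph; since $\pi^*$ is injective on both $A^2(U,\sigma,E,h)$ and $H^0(X,\mathcal P_X^{S}\otimes E)$ (because $\pi$ is surjective), I obtain $A^2(U,\sigma,E,h)=H^0(X,\mathcal P_X^{\partial U}\otimes E)$. I expect this last matching to be the only real obstacle: one must carefully reconcile the rather indirect definition of $\mathcal P_X^{S}$ with the intersection $A^2(\widetilde U,\rho,\pi^*E,\pi^*h)\cap\pi^*\mathcal O_X(U,E)$ — in particular check that the pole bound by $D_\text{m}$ on all of $\widetilde X$ and the pole bound by $D'$ agree for sections holomorphic on $U$, and that nothing is lost in passing between a section on $\widetilde U$ and its meromorphic extension to $\widetilde X$ — while everything else is a direct application of results already established.
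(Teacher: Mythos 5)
Your proposal is correct and follows essentially the same route as the paper: pull back via the normalization to identify $A^2(U,\sigma,E,h)$ with $A^2(\widetilde U,\rho,\pi^*E,\pi^*h)\cap\pi^*\mathcal O_X(U,E)$, note that $\widetilde X\setminus\widetilde U=\pi^{-1}(S)$ is finite hence locally polar, apply Theorem \ref{thm:gyokos}(c) with $D_{\widetilde U}=D_{\mathrm m}\big|_{\pi^{-1}(S)}$, and then unwind Definition \ref{def:thesheaf}. The only difference is that you spell out the final matching with $H^0(X,\mathcal P_X^{\partial U}\otimes E)$ (which the paper leaves as ``it follows''), and your bookkeeping there is accurate.
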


\begin{proof}
    As we have seen in the proof Theorem \ref{thm:singhilb}, the normalization induces an isometric embedding of Hilbert spaces
    $$\pi^* : A^2(U,\sigma,E,h) \xrightarrow{ \ \cong \ } A^2(\widetilde{U},\rho,\pi^*E,\pi^*h) \cap \pi^*\mathcal{O}_X(U,E) \xhookrightarrow{ \ \ \ } A^2(\widetilde{U},\rho,\pi^*E,\pi^*h).$$
    Since the union of finitely many point is locally polar, the Hilbert space $A^2(\widetilde{U},\rho,\pi^*E,\pi^*h)$ is finite-dimensional. By Theorem \ref{thm:gyokos} this finite-dimensional Hilbert space is equal to $\{ \, \tilde{s} \in \mathcal{M} (\widetilde{X},\pi^*E) \ | \ {\rm div}(\tilde{s}) + D_{\rm m}\big|_{\pi^{-1}(S)} \geq 0 \, \}$. It follows that $$A^2(U,\sigma,E,h) = H^0(X,\mathcal{P}_X^{\partial U} \otimes E )$$
    as claimed.
    \end{proof}

We have now established the equivalence (a) $\Longleftrightarrow$ (c) of Theorem \ref{thm:projmain} and the next Lemma will establish the equivalence (a) $\Longleftrightarrow$ (b).

\begin{lemma}
    Let $X$ be a connected and reduced complex analytic curve with volume form $\sigma$ and $(E,h)$ a Hermitian vector bundle over $X$. Let $U \subseteq X$ be an open subset. The Hilbert space $A^2(U,\sigma,E,h)$ is finite-dimensional if and only if the the set $X \setminus U$ is locally polar.
\end{lemma}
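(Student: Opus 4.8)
The plan is to reduce the general statement to the case $U = X \setminus S$ for a finite set of singular points $S$, which was handled in Lemma \ref{lem:main} together with Theorem \ref{thm:gyokos}, and to do so by passing to the normalization. First I would set up the standard notation $\pi : \widetilde{X} \to X$, $\rho = \pi^*\sigma$, $\widetilde{U} = \pi^{-1}(U)$, and recall from the proof of Theorem \ref{thm:singhilb} (or Lemma \ref{lem:uaveges}) that $\pi^*$ gives an isometric embedding
$$A^2(U,\sigma,E,h) \xrightarrow{\ \cong\ } A^2(\widetilde{U},\rho,\pi^*E,\pi^*h) \cap \pi^*\mathcal{O}_X(U,E) \hookrightarrow A^2(\widetilde{U},\rho,\pi^*E,\pi^*h),$$
and moreover, by Remark \ref{rem:vegeskodim}, that the inclusion $\pi^*\mathcal{O}_X(U,E) \hookrightarrow \mathcal{O}_{\widetilde{X}}(\widetilde{U},\pi^*E)$ has finite codimension (provided $\text{Sing}X \cap U$ is finite, which it is since $X$ is compact, hence $\text{Sing}X$ is finite). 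Consequently $A^2(U,\sigma,E,h)$ is finite-dimensional \emph{if and only if} $A^2(\widetilde{U},\rho,\pi^*E,\pi^*h)$ is finite-dimensional — this is precisely Lemma \ref{lem:uaveges}.

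Next I would invoke Theorem \ref{thm:gyokos} applied to the compact Riemann surface $\widetilde{X}$ with the $D_\text{m}$-volume form $\rho$ (Proposition \ref{prop:projDmvol}), the bundle $\pi^*E$, and the open set $\widetilde{U}$: it tells us that $\dim_\bC A^2(\widetilde{U},\rho,\pi^*E,\pi^*h) < \infty$ if and only if $\widetilde{X} \setminus \widetilde{U} = \pi^{-1}(X \setminus U)$ is locally polar. Finally, Theorem \ref{thm:loc-polar-equiv} (the equivalence of local polarity on $X$ and on $\widetilde{X}$ under the normalization map) converts "$\pi^{-1}(X\setminus U)$ locally polar in $\widetilde{X}$" into "$X \setminus U$ locally polar in $X$". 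Chaining these three equivalences yields the claim. Strictly, one should also note that $\widetilde{X}$ need not be connected, but Theorem \ref{thm:gyokos} and Theorem \ref{thm:loc-polar-equiv} apply component-by-component, and finite-dimensionality and local polarity are both checked componentwise, so the argument goes through verbatim.

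The only genuine subtlety — and the step I would flag as the main point requiring care rather than a real obstacle — is verifying that the codimension-finiteness input (Remark \ref{rem:vegeskodim}) is being used correctly: we need that a \emph{subspace of finite codimension} in a vector space is finite-dimensional iff the whole space is, which holds for any vector space, and that $A^2(\widetilde{U},\rho,\pi^*E,\pi^*h) \cap \pi^*\mathcal{O}_X(U,E)$ has finite codimension \emph{inside} $A^2(\widetilde{U},\rho,\pi^*E,\pi^*h)$ — this follows because it is the intersection of $A^2(\widetilde{U},\rho,\pi^*E,\pi^*h)$ with a finite-codimension subspace of the ambient space of all holomorphic sections. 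Everything else is a formal citation of results already established in the excerpt, so the proof is short: one sentence for each of the three equivalences, plus a remark on disconnectedness.
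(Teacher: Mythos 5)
Your proposal is correct and follows essentially the same route as the paper: the paper's proof likewise passes to the normalization, uses the finite-codimension observation of Remark \ref{rem:vegeskodim} to transfer finite-dimensionality between $A^2(U,\sigma,E,h)$ and $A^2(\widetilde{U},\rho,\pi^*E,\pi^*h)$ (i.e.\ Lemma \ref{lem:uaveges}), then applies Theorem \ref{thm:gyokos} on $\widetilde{X}$ and Theorem \ref{thm:loc-polar-equiv} to translate local polarity back to $X$. Your extra remarks on disconnected normalizations and on how the finite-codimension intersection is used are sound but not needed beyond what the paper records.
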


\begin{proof}
    As usual, the normalization is denoted by $\pi : \widetilde{X} \to X$ and let $\rho = \pi^*\sigma$. As $\pi^*\mathcal{O}_X(U,E) \subseteq \mathcal{O}_{\widetilde{X}}(\widetilde{U},\pi^*E)$ has finite codimension by Remark \ref{rem:vegeskodim}, $A^2(U,\sigma,E,h)$ is finite-dimensional if and only if $A^2(\widetilde{U},\rho,\pi^*E,\pi^*h)$ is finite-dimensional, which is equivalent to $\widetilde{X}\setminus \widetilde{U}$ being locally polar by Theorem \ref{thm:gyokos}. Finally, by Theorem \ref{thm:loc-polar-equiv}, $\widetilde{X}\setminus \widetilde{U}$ is locally polar if and only if $X \setminus U$ is locally polar.
\end{proof}

\subsection{Examples}\label{section:dimension}

Throughout this subsection we illustrate Theorem \ref{thm:projmain} with a couple of examples.

\begin{example}
    Let $X$ be a nodal curve. In this case $D_\text{m} = 0 \in \text{Div}(\widetilde{X})$, thus
    $$A^2(U,\sigma) = \begin{cases}
        \hfil H^0(X,\mathcal{O}_X) \cong \bC & \text{ if } X \setminus U \text{ is polar }\\
        \text{infinite-dimensional} & \text{ otherwise}
        \end{cases}$$
        for any volume form $\sigma$ on $X$ and any open subset $U \subseteq X$.
\end{example}

\begin{example}
    Let $X$ be a unicuspidal rational curve with  $\text{Sing}X = \{p\}$ and multiplicity $m(X,p) = m$. Let $U \subset X$ be an open subset. Then
    $$A^2(U,\sigma) = \begin{cases}
        \hfil H^0(X,\mathcal{O}_X) \cong \bC & \text{ if } X \setminus U \text{ is polar and } p \in U \\
        H^0(X, \pi_*\mathcal{O}_{\mathbb{P}^1}(m-1)) \cong \bC^m & \text{ if } X \setminus U \text{ is polar and } p \not\in U \\
        \hfil \text{infinite-dimensional} & \text{ otherwise}
    \end{cases}$$
    for any volume form $\sigma$ on $X$.
\end{example}

    \begin{definition}
        Let $X$ be a curve with volume form $\sigma$. Given an open subset $U \subseteq X$, its $L^2$ delta invariant is defined as
        $$L^2\delta(U,\sigma) := \dim_\bC A^2(\widetilde{U},\pi^*\sigma ) \big/ \pi^* A^2(U,\sigma).$$
        Compare with Definition \ref{def:delta} and Proposition \ref{prop:deltastein}.
    \end{definition}

    \begin{prop}\label{prop:l2deltaleqdelta}
        Let $X$ be a curve with volume form $\sigma$. Then $$L^2\delta(U,\sigma) \leq \delta(U)$$ for any open subset $U \subseteq X$.
    \end{prop}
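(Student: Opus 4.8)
The plan is to exhibit an injection from the quotient $A^2(\widetilde U,\pi^*\sigma)/\pi^*A^2(U,\sigma)$ into the quotient $\mathcal O_{\widetilde X}(\widetilde U)/\pi^*\mathcal O_X(U)$ (or, when $U$ is not Stein, into the stalkwise quotient supported on $\mathrm{Sing}X\cap U$), and then identify the dimension of the latter with $\delta(U)$. First I would observe the inclusion chains: every $L^2$-holomorphic section on $U$ is in particular holomorphic, so $\pi^*A^2(U,\sigma)\subseteq \pi^*\mathcal O_X(U)\cap A^2(\widetilde U,\pi^*\sigma)$, and likewise $A^2(\widetilde U,\pi^*\sigma)\subseteq \mathcal O_{\widetilde X}(\widetilde U)$. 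Moreover $\pi^*\mathcal O_X(U)\cap A^2(\widetilde U,\pi^*\sigma)=\pi^*A^2(U,\sigma)$, because a holomorphic function on $U$ descends from its pullback (by the universal property of normalization / the isometry $\pi^*$ already used repeatedly in the paper) and being $L^2$ is preserved under this correspondence since $\pi$ is a biholomorphism away from the finite set $\pi^{-1}(\mathrm{Sing}X)$, which has measure zero. Hence the natural map
$$A^2(\widetilde U,\pi^*\sigma)\big/\pi^*A^2(U,\sigma)\ \longrightarrow\ \mathcal O_{\widetilde X}(\widetilde U)\big/\pi^*\mathcal O_X(U)$$
is well-defined and injective.

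Next I would bound the target. If $U$ is Stein, Proposition \ref{prop:deltastein} gives $\dim_\bC \mathcal O_{\widetilde X}(\widetilde U)/\pi^*\mathcal O_X(U)=\delta(U)$ outright, and injectivity of the map above yields $L^2\delta(U,\sigma)\le \delta(U)$. For a general open $U\subseteq X$ (which need not be Stein), I would instead use the short exact sequence $0\to\mathcal O_U\to\pi_*\mathcal O_{\widetilde U}\to\mathcal Q\to 0$ where $\mathcal Q:=\pi_*\mathcal O_{\widetilde U}/\mathcal O_U$ is the coherent skyscraper sheaf of Remark \ref{rem:vegeskodim}, supported on $\mathrm{Sing}X\cap U$ with total stalk dimension $\delta(U)$. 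Taking global sections gives an exact sequence $0\to\mathcal O(U)\to\mathcal O(\widetilde U)\to H^0(U,\mathcal Q)$, so $\dim_\bC \mathcal O(\widetilde U)/\pi^*\mathcal O(U)\le \dim_\bC H^0(U,\mathcal Q)=\delta(U)$. Composing with the injection from the first paragraph gives the claim in general.

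The step I expect to require the most care is the identity $\pi^*\mathcal O_X(U)\cap A^2(\widetilde U,\pi^*\sigma)=\pi^*A^2(U,\sigma)$ — specifically the direction showing that a pulled-back holomorphic function which is $L^2$ upstairs comes from an $L^2$ function downstairs. One inclusion is trivial; for the other, given $f\in\mathcal O_X(U)$ with $\pi^*f\in L^2(\widetilde U,\pi^*\sigma)$, one must check $\int_{U^*}|f|^2\sigma<\infty$. This follows because $\pi$ restricts to a biholomorphism $\widetilde U\setminus\pi^{-1}(\mathrm{Sing}X)\xrightarrow{\ \cong\ }U^*$ and $\pi^*\sigma$ is the pullback volume form there, so $\int_{U^*}|f|^2\sigma=\int_{\widetilde U\setminus\pi^{-1}(\mathrm{Sing}X)}|\pi^*f|^2\pi^*\sigma\le\|\pi^*f\|^2_{L^2(\widetilde U,\pi^*\sigma)}<\infty$; this is exactly the content behind the isometry $\pi^*$ invoked in the proofs of Theorems \ref{thm:singhilb} and \ref{thm:projmain}, now read in reverse. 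Once this is in place the rest is formal linear algebra and the cohomological bookkeeping above.
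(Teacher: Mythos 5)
Your proof is correct and follows essentially the same route as the paper: the paper also reduces the claim to the injection $A^2(\widetilde U,\pi^*\sigma)\big/\pi^*A^2(U,\sigma)\hookrightarrow \mathcal O_{\widetilde X}(\widetilde U)\big/\pi^*\mathcal O_X(U)$ (phrased there via the second isomorphism theorem, resting on the same identity $\pi^*A^2(U,\sigma)=A^2(\widetilde U,\pi^*\sigma)\cap\pi^*\mathcal O_X(U)$) and then identifies the dimension of the target with $\delta(U)$. The only divergence is the last step: you bound $\dim_\bC \mathcal O_{\widetilde X}(\widetilde U)\big/\pi^*\mathcal O_X(U)$ by $\dim_\bC H^0\big(U,\pi_*\mathcal O_{\widetilde U}/\mathcal O_U\big)=\delta(U)$ using only left-exactness of global sections, which treats all open $U$ uniformly, whereas the paper invokes Proposition \ref{prop:deltastein} for Stein $U$ and handles the remaining compact case separately; the substance is the same, with your bookkeeping being slightly more uniform.
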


    \begin{proof}
        The second isomorphism theorem shows that
        \begin{align*}                  
            A^2(\widetilde{U},\pi^*\sigma)\big/\pi^*A^2(U,\sigma) &= A^2(\widetilde{U},\pi^*\sigma)\big/\big(\,A^2(\widetilde{U},\pi^*\sigma)\cap \pi^*\mathcal{O}_X(U)\,\big) \\
            &\cong \big( \, A^2(\widetilde{U},\pi^*\sigma) + \pi^*\mathcal{O}_X(U) \,) \big/ \pi^*\mathcal{O}_X(U) \\
            &\subseteq \mathcal{O}_{\widetilde{X}}(\widetilde{U}) \big/ \pi^*\mathcal{O}_X(U).
        \end{align*}
        If $U$ is Stein, the dimension of this latter quotient is precisely $\delta(U)$ by Proposition \ref{prop:deltastein}. Remains the case when $X$ is compact and $U=X$ but then both sides are $0$.
    \end{proof}

\noindent One might expect these two invariants to be equal, however this is not the case. The intuitive reason for this is that $\delta(U)$ is determined by $U \cap \text{Sing}X$, while $L^2\delta(U)$ depends on $\overline{U} \cap \text{Sing}X$. 

In the remainder of this subsection we give an example showing that the $L^2$ delta invariant can attain any value between $0$ and the ``usual" delta invariant. This is described more precisely in the following

\begin{prop}
    For any natural number $n$, there exists a Stein curve $U$ with $\delta(U) = n$ such that for any $0 \leq k \leq n$ there is a compact curve $X_k$ with volume form $\sigma_k$ and an open inclusion $U \hookrightarrow X_k$ with the property that
    $$L^2\delta(U,\sigma_k) = k.$$
\end{prop}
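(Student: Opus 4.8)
The plan is to take $U$ to be a rational affine curve with $n$ ordinary cusps, so that its normalization is $\bC$, and then to vary only the nature of the single point added at infinity.

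Fix $n$ distinct points $a_1,\dots,a_n\in\bC$ and let $U$ be the affine algebraic curve obtained from $\bC$ by pinching it into an ordinary cusp at each $a_i$; thus the normalization $\pi\colon\bC\to U$ is a bijection, $U$ has an ordinary cusp (of delta invariant $1$) at each $\pi(a_i)$ and is smooth elsewhere, and $U$ is Stein with $\delta(U)=n$. A holomorphic function descends across an ordinary cusp precisely when its first derivative vanishes there, so
$$\pi^*\mathcal O(U)=\{\,f\in\mathcal O(\bC)\ \mid\ f'(a_1)=\dots=f'(a_n)=0\,\},\qquad \dim_\bC\mathcal O(\bC)\big/\pi^*\mathcal O(U)=n.$$

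For each $0\le k\le n$ I would let $X_k$ be the compact curve obtained from $\mathbb P^1$ by pinching it into an ordinary cusp at every $a_i$ and, in addition, into a unibranch singularity of multiplicity $k+1$ at $\infty$ (for $k=0$ there is no pinching at $\infty$, i.e.\ $\infty$ stays a smooth point). That such $X_k$ exists as a (projective) compact curve is a standard pinching construction; its normalization is $\pi\colon\mathbb P^1\to X_k$, it contains $U=X_k\setminus\{\pi(\infty)\}$ as an open subset, and $\widetilde U:=\pi^{-1}(U)=\mathbb P^1\setminus\{\infty\}$. Fix any volume form $\sigma_k$ on $X_k$ (Remark \ref{rem:volformexistence}). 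By construction the multiplicity-divisor of $X_k$, viewed on its normalization $\mathbb P^1$, is $D_\text{m}=\sum_{i=1}^n a_i+k\cdot\infty$ (each cusp contributes $2-1=1$, the point at infinity contributes $(k+1)-1=k$), so the divisor $D_{\widetilde U}$ of \eqref{E:DUdivizor} equals $k\cdot\infty$. Since the single point $\{\infty\}$ is locally polar, Proposition \ref{prop:projDmvol} together with Theorem \ref{thm:gyokos} gives
$$A^2(\widetilde U,\pi^*\sigma_k)=H^0\!\bigl(\mathbb P^1,\mathcal O_{\mathbb P^1}(k\cdot\infty)\bigr)=\{\,\text{polynomials in }t\text{ of degree}\le k\,\},$$
a vector space of dimension $k+1$.

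Finally, the isometric embedding of Theorem \ref{thm:singhilb} identifies $\pi^*A^2(U,\sigma_k)$ with $A^2(\widetilde U,\pi^*\sigma_k)\cap\pi^*\mathcal O(U)$, which by the two displayed formulas above is the space of polynomials $P$ with $\deg P\le k$ and $P'(a_1)=\dots=P'(a_n)=0$. As $\deg P'\le k-1<n$ while $P'$ vanishes at the $n$ distinct points $a_1,\dots,a_n$, necessarily $P'\equiv0$, so $P$ is constant; hence $\dim_\bC\pi^*A^2(U,\sigma_k)=1$ and
$$L^2\delta(U,\sigma_k)=\dim_\bC A^2(\widetilde U,\pi^*\sigma_k)\big/\pi^*A^2(U,\sigma_k)=(k+1)-1=k,$$
whereas $\delta(U)=n$, as claimed. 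I expect the only step needing genuine care to be the construction of $X_k$: one must check that pinching the compact Riemann surface $\mathbb P^1$ at finitely many points with the prescribed local rings yields an honest compact (projective) curve that contains $U$ as an open subset and carries exactly a unibranch singularity of multiplicity $k+1$ at infinity. Everything else is a direct application of the results already established.
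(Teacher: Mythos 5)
Your construction is correct and follows the same overall strategy as the paper: take $U$ a rational curve whose normalization is $\bC$, compactify it by gluing in a point at infinity carrying a tunable unibranch singularity, compute $A^2(\widetilde U,\pi^*\sigma_k)$ via Theorem \ref{thm:gyokos} (with $D_{\widetilde U}=k\cdot\infty$, which you read off correctly from \eqref{E:DUdivizor}), and then intersect with $\pi^*\mathcal O(U)$. The differences are in the concrete models and the final count: the paper takes $U=\{x^2=y^{2n+1}\}$, a single point of delta invariant $n$, glues it to $V_k=\{u^{2k}=v^{2k+1}\}$ (multiplicity $2k$, so the divisor at infinity is $(2k-1)\cdot\infty$), and counts gaps of the numerical semigroup $\langle 2,2n+1\rangle$ below $2k$, getting $\min\{k,n\}$; you instead take $n$ ordinary cusps and a multiplicity-$(k+1)$ point at infinity, and your count reduces to the elementary observation that a polynomial of degree $\le k\le n$ whose derivative vanishes at $n$ distinct points is constant. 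Your version is arguably more transparent at the counting stage; the paper's explicit plane-curve equations make the existence of $X_k$ immediate, which is exactly the step you flag as needing care. That gap is easy to close by imitating the paper's gluing: realize $U$ as the affine curve with coordinate ring $\{f\in\bC[t]\mid f'(a_1)=\dots=f'(a_n)=0\}$ (or an explicit injective polynomial parametrization), realize the singularity at infinity by the monomial curve $s\mapsto(s^{k+1},s^{k+2},\dots,s^{2k+1})\subset\bC^{k+1}$, which is unibranch of multiplicity $k+1$, and glue the two affine curves along their smooth loci via $s=1/t$; the result is a compact analytic curve, hence projective by the references in Remark \ref{rem:volformexistence}, containing $U$ as the complement of one point. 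With that made explicit, your argument is complete.
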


\begin{proof} Define $U$ to be the affine algebraic curve given by the equation $x^2-y^{2n+1} = 0$
in the plane $\bC^2$.
It has a single singularity at the origin with delta invariant $n$, whence $\delta(U) = n$. Its normalization is given by $ \nu : \bC \to U\,;\ t \mapsto (t^{2n+1},t^2)$.

Let $V_k$ be the affine algebraic curve given by the equation $u^{2k} - v^{2k+1} = 0$ in the plane $\bC^2$. It has a single singularity at the origin with multiplicity $2k$. Its normalization is given by $ \nu_k : \bC \to V_k\,;\ s \mapsto (s^{2k+1},s^{2k})$.

Finally, let $I : \bC^* \to \bC^*$ denote the reciprocal map $t \mapsto s = 1/t$ and define $X_k$ by gluing together $U$ and $V_k$ via the biholomorphism
$$\nu_k \circ I \circ \nu^{-1} : U\setminus \{(0,0)\} \xrightarrow[]{\ \cong \ } V_k \setminus \{(0,0)\}.$$
The maps $\nu$ and $\nu_k$ ``stick together" to yield a global normalization map $\pi_k : \mathbb{P}^1 \to X_k$ with $\pi_k^{-1}(\text{Sing}X_k) = \big\{ [0:1],[1:0] \big\}$

Now, according to Theorem \ref{thm:projmain}
$$A^2(\widetilde{U}) = H^0\big(\mathbb{P}^1,(2k-1)\cdot [0:1]\big) \cong \bC\langle 1,t,t^2,\dots,t^{2k-1} \rangle.$$
On the other hand
\begin{align*}
    \pi_k^*A^2(U) &= \pi_k^*\mathcal{O}_{X_k}(U)\cap A^2(\widetilde{U})\\
    &\cong \bC[t^2,t^{2n+1}] \cap \bC\langle 1,t,t^2,\dots,t^{2k-1} \rangle \\
    &= \bC\langle\,t^\ell\ | \ \exists\, a,b\in\mathbb{N} : \ell = 2a+(2n+1)b \text{ and } \ell \leq 2k-1 \, \rangle.
\end{align*}
Hence $L^2\delta(U,\sigma) = \#\{\,0\leq \ell \leq 2k-1 \ | \ \ell \neq 2a+(2n+1)b \text{ for any } a,b \in \mathbb{N} \ \} = \min\{k,n\}$
as claimed. \end{proof}

\subsection{The affine case}\label{ss:aff}

 In this subsection we adapt Theorem \ref{thm:projmain} to the setting of affine algebraic curves. Any holomorphic vector bundle over a Stein curve is analytically trivial (Corollary 8.3.3. of \cite{F17}, see also \cite{G571} and \cite{G572}). Hence, in the following we may restrict ourselves to the case of the trivial line bundle without loss of generality.

Let $Y \subset \bC^n$ be an affine \textit{algebraic} curve with projective closure
$$X:=\overline{Y} = Y \sqcup (H_\infty\cap X) \subset \mathbb{P}^n$$
where $H_\infty$ is the hyperplane at infinity. Hence $\text{Sing}X \subseteq \text{Sing}Y \cup (H_\infty \cap X)$. Let $\pi : \widetilde{X} \to X$ denote the normalization of $X$. At every point $p\in X$ the curve $X$ has a local decomposition of the form $(X,p) = \bigcup_i(X_i,p)$ where the germ $(X_i,p)$ is irreducible for all $i$. The preimages $\pi^{-1}(p) = \{\tilde{p}_i\}_i$ correspond bijectively to the set of branches at $p$ via the normalization $\pi:(\widetilde{X},\tilde{p}_i) \to (X_i,p)$.

\begin{definition}\label{def:affmultdiv}
   The affine multiplicity-divisor of the curve $Y$ is defined as
\begin{align*}
    D^{\mathbb{A}}_\text{m} :=& \sum_{p\,\in \,\text{Sing}Y} \sum_i (m(X_i,p)-1)\cdot \tilde{p}_{i}  -\sum_{p \, \in \, H_\infty \cap X} \sum_i \big((X_i\cdot H_\infty)_{p}+1\big)\cdot \tilde{p}_{i}\in \text{Div}(\widetilde{X})
\end{align*}
   where $(X_i\cdot H_\infty)_{p}$ denotes the intersection multiplicity of the curve germ $X_i$ with the hyperplane $H_\infty$ at the point $p$. Note that the degree of $D_\mathbf{m}^\mathbb{A}$ is
   $$\deg(D_\mathbf{m}^\mathbb{A}) = \sum_{p\in Y}\big(m(X,p)-r(X,p)\big)-\sum_{p \in X \cap H}\big((X\cdot H)_p + r(X,p) \big)$$
   where $r(X,p)$ denotes the number of locally irreducible components of $X$ at the point $p$.
\end{definition}

Let $Y \subset \bC^n$ be an affine algebraic curve with volume form $\sigma = \left.\omega\right|_Y$ where $\omega$ is the K\"ahler form $\omega = \frac{i}{2}\sum_idz_i \wedge d\bar{z}_i$ on $\bC^n$.

\begin{prop}
    Let $Y$, $\sigma$ and $\pi : \widetilde{X}\to X$ be as defined above. Then $\rho := \pi^*\sigma$ is a $D_\text{m}^\mathbb{A}$-volume form on $\widetilde{X}$.
\end{prop}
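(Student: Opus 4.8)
The plan is to prove the asymptotics defining a $D_\text{m}^\mathbb{A}$-volume form (Definition \ref{def:gyokosmetrika}) locally around each point of $\widetilde X$, splitting into three cases according to the image of the point under $\pi$: over $Y^\ast = Y\setminus\text{Sing}Y$, over $\text{Sing}Y$, and over $H_\infty\cap X$. The first two cases reduce to already‑proven facts; only the analysis at infinity requires a genuinely new (if routine) computation.

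First, over $Y^\ast$ the map $\pi$ is a biholomorphism onto its image and $\sigma = \omega\big|_Y$ restricts there to the smooth positive volume form $\omega\big|_{Y^\ast}$, so $\rho = \pi^\ast\sigma$ is a positive continuous volume form on $\pi^{-1}(Y^\ast)$, which contains $\widetilde X\setminus\pi^{-1}\big(\text{Sing}Y\cup(H_\infty\cap X)\big)$. Next, near a point $\tilde p_i\in\pi^{-1}(p)$ with $p\in\text{Sing}Y$, the form $\sigma$ is, in a neighbourhood of $p$, literally equal to $\iota^\ast\omega$ for the inclusion $\iota:Y\hookrightarrow\bC^n$; hence $(X,p)=(Y,p)$ carries $\sigma$ as a volume form in the sense of Definition \ref{def:vol}, and the computation in the proof of Proposition \ref{prop:projDmvol} applies verbatim, giving $\rho\sim|t|^{2(m(X_i,p)-1)}\tfrac{i}{2}\,dt\wedge d\bar t$ in a Puiseux coordinate $t$ around $\tilde p_i$. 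Since $m(X_i,p)-1$ is precisely the coefficient of $\tilde p_i$ in $D_\text{m}^\mathbb{A}$, this is what we want (and it merely says $\rho$ extends to a positive continuous form when $m(X_i,p)=1$).

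The remaining case is a point $\tilde p_i\in\pi^{-1}(p)$ with $p\in H_\infty\cap X$. Fix homogeneous coordinates $[Z_0:\cdots:Z_n]$ on $\mathbb{P}^n$ with $H_\infty=\{Z_0=0\}$ and $z_j=Z_j/Z_0$ the Euclidean coordinates on $\bC^n$; choose $k$ with $Z_k(p)\neq 0$, so necessarily $k\neq 0$. Setting $w_l:=(Z_l/Z_k)\circ\pi$ for $l=0,\dots,n$, these are holomorphic at $t=0$, $w_k\equiv 1$, and $d:=\text{ord}_t w_0$ equals the intersection multiplicity $(X_i\cdot H_\infty)_p$ by Definition \ref{def:intmult}; moreover $d\geq 1$ because $p\in H_\infty$ (and $d<\infty$ since no branch of $X$ lies in $H_\infty$). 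Writing $w_0(t)=t^d u(t)$ with $u(0)\neq 0$ and $g_j:=w_j/u$, one obtains $z_j\circ\pi = g_j(t)\,t^{-d}$ with each $g_j$ holomorphic at $0$ and, crucially, $g_k(0)=1/u(0)\neq 0$. Then $\pi^\ast\omega=\bigl(\sum_j|(z_j\circ\pi)'(t)|^2\bigr)\tfrac{i}{2}\,dt\wedge d\bar t$ with $(z_j\circ\pi)'(t)=t^{-d-1}\bigl(t\,g_j'(t)-d\,g_j(t)\bigr)$, so the density equals $|t|^{-2d-2}\sum_j|t\,g_j'(t)-d\,g_j(t)|^2$. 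The sum is continuous in $t$ and at $t=0$ it equals $d^2\sum_j|g_j(0)|^2\geq d^2|g_k(0)|^2>0$. Hence $\rho\sim c\,|t|^{-2(d+1)}\tfrac{i}{2}\,dt\wedge d\bar t$ with $c=d^2\sum_j|g_j(0)|^2>0$, matching the coefficient $-\bigl((X_i\cdot H_\infty)_p+1\bigr)=-(d+1)$ of $\tilde p_i$ in $D_\text{m}^\mathbb{A}$. Combining the three cases gives the claim.

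The only genuinely new point — and thus the main (mild) obstacle — is the bookkeeping at infinity: identifying the common pole order of the Euclidean coordinates along the branch with $(X_i\cdot H_\infty)_p$, and checking that differentiating does not kill the leading term, i.e. that $\sum_j|t\,g_j'(t)-d\,g_j(t)|^2$ is nonzero at $t=0$. This is exactly where one uses $d\geq 1$ together with $g_k(0)\neq 0$; everything else is an appeal to Proposition \ref{prop:projDmvol} and Definition \ref{def:gyokosmetrika}.
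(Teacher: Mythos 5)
Your proof is correct and takes essentially the same route as the paper: the points over $\text{Sing}Y$ are handled by the local computation of Proposition \ref{prop:projDmvol}, and at a point over $H_\infty\cap X$ you pull back $\omega$ along a Puiseux parametrization and identify the pole order of the density as $2\big((X_i\cdot H_\infty)_p+1\big)$. The only (cosmetic) difference is that you divide by a nonvanishing homogeneous coordinate $Z_k$ rather than first normalizing $p=[1:0:\dots:0]$ by a linear change as the paper does, which makes the positivity of the leading coefficient immediate via the $j=k$ term.
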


\begin{proof}
    Due to Proposition \ref{prop:projDmvol}, it is sufficient to analyze $\rho$ in the neighborhood of points $\tilde{p} \in \pi^{-1}(H_\infty \cap X)$. In the chart $(z_1,\dots,z_N) \leftrightarrow [z_1:\dots:z_N:1]$ we have $\omega = \frac{i}{2}\sum_idz_i\wedge d\bar{z}_i$. Assume that the point $p$ lies in the chart $(w_1,\dots,w_N) \leftrightarrow [1:w_1:\dots:w_N]$. In this coordinate system $\omega$ has the form
        $$\omega = \frac{i}{2}\Bigg( d\bigg( \frac{1}{w_N}\bigg)\wedge d\bigg( \frac{1}{\bar{w}_N}\bigg) + \sum_{j=1}^{N-1}d\bigg( \frac{w_j}{w_N}\bigg)\wedge d\bigg( \frac{\bar{w}_j}{\bar{w}_N}\bigg) \Bigg).$$
    Using a linear transformation if necessary, we may assume that $p=[1:0:\dots:0]$. Let $(X,p)= \bigcup_i(X_i,p)$ be the local irreducible decomposition at $p$ and let $\pi : (\widetilde{X},\tilde{p}_i) \to (X_i,p)$ be the restriction of the normalization to one of the branches. In the above coordinate system this is of the form
    $$\pi(t) = \big[ 1 : f_1(t): \dots : f_N(t) \big]$$
    for some holomorphic functions $f_j : (\bC,0) \to (\bC,0)$. Let $m_j\geq 1$ be the vanishing order of $f_j$ such that $f_j(t) = t^{m_j}g_j(t)$ for some holomorphic function $g_j$ with $g_j(0)\neq 0$. It follows that $f_j'(t) = t^{m_j-1}h_j(t)$ for some holomorphic function $h_j$ with $h_j(0) \neq 0$. One now computes that locally around $\tilde{p}_i \in \widetilde{X}$, the form $\rho = \pi^*\sigma$ can be expressed as
    \begin{align*}
        \rho(t) &= \frac{i}{2}\Bigg( d\bigg( \frac{1}{f_N}\bigg)\wedge d\bigg( \frac{1}{\bar{f}_N}\bigg) + \sum_{j=1}^{N-1}d\bigg( \frac{f_j}{f_N}\bigg)\wedge d\bigg( \frac{\bar{f}_j}{\bar{f}_N}\bigg) \Bigg)\\
        &= \Bigg( \frac{|f'_N|^2}{|f_N|^4} + \sum_{j=1}^{N-1} \bigg| \frac{f'_jf_N-f_jf'_N}{f_N^2} \bigg|^2  \Bigg) \frac{i}{2} dt\wedge d\bar{t}\\
        &= |t|^{-2(m_N+1)}\Bigg( \frac{|h_N|^2}{|g_N|^4}+\sum_{j=1}^{N-1}|t|^{2m_j}\frac{|h_jg_N-g_jh_N|^2}{|g_N|^4}  \Bigg)\frac{i}{2}dt\wedge d\bar{t}\\
        &\sim c \cdot |t|^{-2(m_N+1)}\frac{i}{2}dt\wedge d\bar{t}
    \end{align*}
    where $c =\frac{|h_N(0)|^2}{|g_N(0)|^4} > 0$. Finally, we have that $(X_i,H_\infty)_p= m_N$ as claimed: since the hyperplane $H_\infty$ is given by the equation $w_N=0$, it follows that $(X_i,H_\infty)_p = \text{ord}_{t=0}f_N(t) = m_N$ by Definition \ref{def:intmult}. This is exactly what we wanted to show.
\end{proof}

    \noindent As a corollary to this observation we obtain the main theorem of this section which is the affine algebraic analogue of Theorem \ref{thm:projmain}. Before that however, we introduce the affine version of the sheaf $\mathcal{P}_X^{\partial U}$ introduced in \ref{def:thesheaf}.

    \begin{definition}\label{def:theaffsheaf}
    Let $Y$ be an affine algebraic curve with affine multiplicity divisor $D_\text{m}^\mathbb{A} \in \text{Div}(\widetilde{X})$. Given a subset $B \subseteq Y$, define the sheaf $\mathcal{A}_X^B$ by setting
     $$\mathcal{A}_X^B(U) := \{ \, f\in \mathcal{O}_X(\,U \setminus ((\text{Sing}Y \cap B)\cup H_\infty))\,) \ | \ \pi^*f \in \mathcal{M}_{\widetilde X}(\widetilde{U}) \, , \ \text{\normalfont div}(\pi^*f) +D_\text{\normalfont m}^\mathbb{A}\big|_{\widetilde{U}} \geq 0 \, \}$$
     for any open subset $U \subset X = \overline{Y}$, where $\widetilde{U}:=\pi^{-1}(U)$.
\end{definition}
\begin{remark}
    We use the notation $\mathcal{A}_X$ to emphasize that we are dealing with the $\mathcal{A}$ffine case.
\end{remark}
\begin{prop}
    The sheaf $\mathcal{A}_X^B$ is a coherent analytic sheaf.
\end{prop}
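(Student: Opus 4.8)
The plan is to run the argument of Proposition \ref{prop:coherence} essentially verbatim, realizing $\mathcal{A}_X^B$ as the direct image of a genuine holomorphic line bundle under a finite morphism. The one new feature of the affine setting is that we must partially normalize not merely over the singular points of $Y$ lying in $B$, but over the finite set $B' := (\text{Sing}Y \cap B) \cup (X \cap H_\infty)$ (which is finite because $X \cap H_\infty$ is a proper analytic subset of the curve $X$). This is forced by the shape of $D_\mathbf{m}^{\mathbb{A}}$ in Definition \ref{def:affmultdiv}: its negative part is supported over $\pi^{-1}(X \cap H_\infty)$ no matter what $B$ is, and correspondingly $\mathcal{A}_X^B$ always permits poles along $H_\infty$.

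Concretely, I would let $\pi^{B'} : X^{B'} \to X$ be the partial normalization at $B'$ (Remark \ref{rem:partialnorm}) and $\nu^{B'} : \widetilde{X} \to X^{B'}$ the total normalization, so that $\pi = \pi^{B'} \circ \nu^{B'}$, the curve $X^{B'}$ is smooth along $(\pi^{B'})^{-1}(B')$, and $\nu^{B'}$ is a local biholomorphism there. Then I would set
$$D_\mathbf{m}^{\mathbb{A}, B'} := \nu^{B'}_*\big( D_\mathbf{m}^{\mathbb{A}}\big|_{\pi^{-1}(B')} \big) \in \text{Div}(X^{B'}),$$
which makes sense because $\nu^{B'}$ is injective on $\pi^{-1}(B')$, and whose support lies in $(\pi^{B'})^{-1}(B')$ and is therefore disjoint from $\text{Sing}(X^{B'})$. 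Hence $\mathcal{O}_{X^{B'}}(D_\mathbf{m}^{\mathbb{A}, B'})$ is an honest holomorphic line bundle on $X^{B'}$, and since $\pi^{B'}$ is finite, its pushforward $\pi^{B'}_*\mathcal{O}_{X^{B'}}(D_\mathbf{m}^{\mathbb{A}, B'})$ is a coherent analytic sheaf on $X$.

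The remaining point is the identification $\pi^{B'}_*\mathcal{O}_{X^{B'}}(D_\mathbf{m}^{\mathbb{A}, B'}) = \mathcal{A}_X^B$. Given an open $U \subseteq X$, a section of the left-hand sheaf over $U$ is a meromorphic function $g$ on $(\pi^{B'})^{-1}(U)$ with $\text{div}(g) + D_\mathbf{m}^{\mathbb{A}, B'} \geq 0$; since $\pi^{B'}$ is biholomorphic away from $(\pi^{B'})^{-1}(B')$, such a $g$ is the same datum as a function $f \in \mathcal{O}_X(U \setminus ((\text{Sing}Y \cap B) \cup H_\infty))$, and pulling back along $\nu^{B'}$ identifies $\nu^{B'*}g$ with $\pi^*f$. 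Over $\pi^{-1}(B')$, where $\nu^{B'}$ is a local biholomorphism, the inequality $\text{div}(g) + D_\mathbf{m}^{\mathbb{A}, B'} \geq 0$ becomes precisely $\text{div}(\pi^*f) + D_\mathbf{m}^{\mathbb{A}}\big|_{\widetilde U} \geq 0$ there; over $\pi^{-1}(\text{Sing}Y \setminus B)$ the function $f$ is holomorphic by the definition of $\mathcal{A}_X^B$, so $\pi^*f$ is holomorphic and the inequality holds automatically because $D_\mathbf{m}^{\mathbb{A}} \geq 0$ there; and at all other points $D_\mathbf{m}^{\mathbb{A}}$ vanishes. Comparing this with Definition \ref{def:theaffsheaf} yields the asserted equality, and with it the coherence of $\mathcal{A}_X^B$.

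The place where one has to be careful — exactly as flagged after Proposition \ref{prop:coherence} — is that one may \emph{not} push $\mathcal{O}_{\widetilde X}(D_\mathbf{m}^{\mathbb{A}})$ forward directly from the full normalization $\widetilde X$: over singular points of $Y$ outside $B$ the normalization admits more functions than $\mathcal{A}_X^B$ allows, so $\pi_*$ of that line bundle can be a strictly larger sheaf. The whole point of introducing $X^{B'}$ is to arrange that $\text{supp}(D_\mathbf{m}^{\mathbb{A}, B'})$ avoids $\text{Sing}(X^{B'})$, which is what simultaneously makes the sheaf locally free and its pushforward equal to $\mathcal{A}_X^B$; the only genuine verification needed is the stalkwise matching at the finitely many points of $\text{Sing}Y \cup (X \cap H_\infty)$ sketched above.
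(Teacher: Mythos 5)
Your argument is correct and is essentially the paper's own proof: the paper likewise reduces to the projective case (Proposition \ref{prop:coherence}) by taking the partial normalization at the singular points of $B \cup (H_\infty\cap X)$, pushing forward the line bundle determined by the transported divisor, and matching it with $\mathcal{A}_X^B$. Your only cosmetic deviation is normalizing over all of $(\mathrm{Sing}Y\cap B)\cup(X\cap H_\infty)$ rather than just its singular points, which changes nothing since partial normalization at smooth points is a biholomorphism.
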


\begin{proof}
    Identical to the proof of Proposition \ref{prop:coherence} except for the fact that one needs to use the partial resolution of $X = \overline{Y}$ at the singular points of the set $B \cup (H_\infty\cap X)$.
\end{proof}

   \begin{theorem}\label{thm:affmain}
    Let $Y \subset \bC^n$ be a reduced algebraic curve with the volume form $\sigma = \omega\big|_Y$ where $\omega$ is the Euclidean K\"ahler form on $\bC^n$. Let $U \subseteq Y$ be an open subset. The following are equivalent.
    \begin{itemize}
        \item[(a)] $\dim_\bC A^2(U,\sigma) < \infty$.
        \item[(b)] The set $Y\setminus U$ is locally polar.
        \item[(c)] $A^2(U,\sigma) = H^0(X,\mathcal{A}_X^{\partial U})$.
    \end{itemize}
\end{theorem}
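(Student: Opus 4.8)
The plan is to deduce this from Theorem \ref{thm:gyokos}, applied on the normalization $\pi : \widetilde{X}\to X$ of the projective closure $X=\overline{Y}$, in close parallel with the derivation of Theorem \ref{thm:projmain}; the affine flavour enters only through the divisor $D_\mathbf{m}^\mathbb{A}$. Set $\widetilde{U}=\pi^{-1}(U)$ and $\rho=\pi^*\sigma$. We showed above that $\rho$ is a $D_\mathbf{m}^\mathbb{A}$-volume form on the compact Riemann surface $\widetilde{X}$, so Theorem \ref{thm:gyokos} applies with $M=\widetilde{X}$, $D=D_\mathbf{m}^\mathbb{A}$ and trivial bundle; and, exactly as in the proof of Theorem \ref{thm:singhilb}, pullback along $\pi$ is an isometric embedding $\pi^* : A^2(U,\sigma)\hookrightarrow A^2(\widetilde{U},\rho)$ with image $A^2(\widetilde{U},\rho)\cap\pi^*\mathcal{O}_X(U)$.

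For $(a)\Leftrightarrow(b)$ I would argue: since $X$ is compact, $\text{Sing}X$ is finite, hence $\pi^*\mathcal{O}_X(U)\hookrightarrow\mathcal{O}_{\widetilde{X}}(\widetilde{U})$ has finite codimension by Remark \ref{rem:vegeskodim}; so $A^2(U,\sigma)$ is finite-dimensional iff $A^2(\widetilde{U},\rho)$ is, which by Theorem \ref{thm:gyokos} is equivalent to $\widetilde{X}\setminus\widetilde{U}=\pi^{-1}(X\setminus U)$ being locally polar. As $X\setminus U$ and $Y\setminus U$ differ only by the finite set $H_\infty\cap X$, and local polarity is unaffected by adjoining or deleting finitely many points, Corollary \ref{rem:locpolgen} turns this into the statement that $Y\setminus U$ is locally polar.

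For $(b)\Rightarrow(c)$, assume $Y\setminus U$ --- equivalently $X\setminus U$ --- is locally polar, hence nowhere dense, so $\overline{U}=X$, $\partial U=X\setminus U$ and $\widetilde{U}$ is dense in $\widetilde{X}$. Theorem \ref{thm:gyokos}$(c)$ then gives $A^2(\widetilde{U},\rho)=\{\tilde{s}\in\mathcal{M}(\widetilde{X})\mid\text{div}(\tilde{s})+D_{\widetilde{U}}\ge0\}$, and spelling out \eqref{E:DUdivizor} for $M=\widetilde{X}$, $D=D_\mathbf{m}^\mathbb{A}$ (whose positive part is supported on $\pi^{-1}(\text{Sing}Y)$ and whose negative part $(D_\mathbf{m}^\mathbb{A})_-$ is supported on $\pi^{-1}(H_\infty\cap X)$) yields
\begin{gather*}
D_{\widetilde{U}}=\sum_{p\,\in\,\text{Sing}Y\cap\partial U}\sum_i\big(m(X_i,p)-1\big)\cdot\tilde{p}_i+(D_\mathbf{m}^\mathbb{A})_-,\\
\text{whence}\qquad D_\mathbf{m}^\mathbb{A}-D_{\widetilde{U}}=(D_\mathbf{m}^\mathbb{A})_+\big|_{\pi^{-1}(\text{Sing}Y\cap U)}\ge0.
\end{gather*}
Comparing with Definition \ref{def:theaffsheaf}, I would verify that $A^2(U,\sigma)=A^2(\widetilde{U},\rho)\cap\pi^*\mathcal{O}_X(U)=H^0(X,\mathcal{A}_X^{\partial U})$. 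Here ``$\subseteq$'' is immediate: replacing $D_{\widetilde{U}}$ by $D_\mathbf{m}^\mathbb{A}$ only strengthens the divisor inequality away from $\pi^{-1}(\text{Sing}Y\cap U)$, and over those points the pullback of a Bergman section is honestly holomorphic because it descends to a holomorphic function on $U$. For ``$\supseteq$'', a section $f$ of $\mathcal{A}_X^{\partial U}$ is holomorphic on $U$ by construction, so $\pi^*f$ lies in $\pi^*\mathcal{O}_X(U)$ and is holomorphic on $\widetilde{U}$; the ensuing vanishing at $\pi^{-1}(\text{Sing}Y\cap U)$ promotes $\text{div}(\pi^*f)+D_\mathbf{m}^\mathbb{A}\ge0$ to $\text{div}(\pi^*f)+D_{\widetilde{U}}\ge0$, placing $\pi^*f$ in $A^2(\widetilde{U},\rho)$. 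Finally $(c)\Rightarrow(a)$ is automatic, since $\mathcal{A}_X^{\partial U}$ is a coherent sheaf on the compact space $X$.

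The main obstacle is the divisor bookkeeping in $(b)\Rightarrow(c)$: one must carefully sort the points of $\text{supp}(D_\mathbf{m}^\mathbb{A})$ into those lying on $\partial U$, inside $U$ and at infinity, and check that the gap between the divisor $D_{\widetilde{U}}$ produced by Theorem \ref{thm:gyokos} and the divisor $D_\mathbf{m}^\mathbb{A}$ encoded in $\mathcal{A}_X^{\partial U}$ is exactly compensated by the constraint that elements of $A^2(U,\sigma)$ descend to genuine holomorphic functions on $U$ (i.e.\ lie in $\pi^*\mathcal{O}_X(U)$). Apart from that, the argument is a routine transcription of the proof of Theorem \ref{thm:projmain}, with Corollary \ref{rem:locpolgen} replacing Theorem \ref{thm:loc-polar-equiv} and $D_\mathbf{m}^\mathbb{A}$ replacing $D_\mathbf{m}$.
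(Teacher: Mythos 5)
Your proposal is correct and takes essentially the same route as the paper: pull everything back along the normalization of $\overline{Y}$, use the finite codimension of $\pi^*\mathcal{O}_X(U)$ in $\mathcal{O}_{\widetilde X}(\widetilde U)$ together with Corollary \ref{rem:locpolgen} for (a)$\Leftrightarrow$(b), and apply Theorem \ref{thm:gyokos} with the divisor $D_\mathbf{m}^{\mathbb{A}}$ for the equality in (c); your divisor bookkeeping agrees with the paper's. The only cosmetic difference is that you feed Theorem \ref{thm:gyokos}(c) directly to $\widetilde U$ instead of first extending sections via Lemma \ref{lem:singext} and then running the argument of Lemma \ref{lem:main}; the extension of $f$ itself across $\partial U \setminus \mathrm{Sing}\,Y$ (needed to regard it as a section of $\mathcal{A}_X^{\partial U}$) is the one step you leave implicit, but it is immediate since $\pi$ is biholomorphic over those points.
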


\begin{remark}
    In fact, under the hypotheses of Theorem \ref{thm:affmain}, for any $f \in A^2(U,\sigma)$, the meromorphic function $\pi^*f$ extends holomorphically to all of $\widetilde{X} \setminus \pi^{-1}(\text{Sing}(Y) \cap \partial U)$ with $\pi^*f \big|_{\pi^{-1}(X\cap H_\infty)} = 0$. Note however that this only implies that $f$ will extend to $X \setminus ({\rm Sing}Y\cap \partial U)$ as a \emph{weakly} holomorphic function and hence we can only conclude that $f \in \mathcal{O}_X(Y \setminus ({\rm Sing}Y\cap \partial U))$.
\end{remark}

\begin{proof}
    We first show the equivalence of (a) and (b). We will temporarily use the notations $A^2_Y(U,\sigma)$ and $A^2_X(U,\sigma)$ depending on whether $U$ is considered as an open subset of $Y$ or of $X=\overline{Y}$. This is just a conceptual difference however as clearly $A^2_Y(U,\sigma) = A^2_X(U,\sigma)$. The normalization induces an isomorphism
    $$\pi^* : A^2_X(U,\sigma) \xrightarrow{\ \cong \ } A^2(\widetilde{U},\rho) \cap \pi^*\mathcal{O}_X(U) \xhookrightarrow{ \ \ \ } A^2(\widetilde{U},\rho).$$
    As $\pi^*\mathcal{O}_X(U) \subseteq \mathcal{O}_{\widetilde{X}}(\widetilde{U})$ has finite codimension, this means that
    $$\dim_\bC A^2_Y(U,\sigma) < \infty \ \Longleftrightarrow \ \dim_\bC A^2(\widetilde{U},\rho) < \infty.$$
    Hence, by Theorem \ref{thm:gyokos}, we have that $\dim_\bC A^2_Y(U,\sigma) < \infty$ if and only if $\widetilde{X} \setminus \widetilde{U}$ is locally polar. Now, apply Corollary \ref{rem:locpolgen} to see that $\widetilde{X} \setminus \widetilde{U}$ is locally polar if and only if $Y \setminus U$ is locally polar.

    Finally, $(c) \Rightarrow (a)$ is clear, while $(a) \Rightarrow (c)$ follows from Lemma \ref{lem:singext} using an argument identical to that of Lemma \ref{lem:main} but applied to the affine multiplicity divisor $D_{\text{m}}^{\mathbb{A}}$.
\end{proof}

\begin{corollary}\label{cor:szingWD}

    Let $Y\subset \bC^n$ be an affine algebraic curve with projective closure $X\subset \mathbb{P}^n$ and let $H_\infty \leq \mathbb{P}^n $ denote the hyperplane at infinity. Assume furthermore, that
    $$\sum_{p\,\in\, Y}\big(m(X,p)-r(X,p)\big)<\sum_{p \,\in\, X \cap H_\infty}\big((X\cdot H)_p + r(X,p) \big).$$
    Then $Y$ has the property that $A^2(U,\sigma)$ is either infinite-dimensional or trivial for any open subset $U \subseteq Y$.
\end{corollary}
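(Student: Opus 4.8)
The plan is to read the dichotomy off from Theorem~\ref{thm:affmain}, after noting that the stated inequality is nothing but the condition $\deg D_\text{m}^\mathbb{A}<0$ (compare the degree formula in Definition~\ref{def:affmultdiv}). Fix an open subset $U\subseteq Y$. If $Y\setminus U$ is not locally polar, then $\dim_\bC A^2(U,\sigma)=\infty$ by the equivalence $(a)\Leftrightarrow(b)$ of Theorem~\ref{thm:affmain}, and there is nothing to prove. Otherwise $Y\setminus U$ is locally polar, and part $(c)$ gives $A^2(U,\sigma)=H^0(X,\mathcal{A}_X^{\partial U})$; it therefore suffices to show that this space of global sections is trivial.

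So let $f\in H^0(X,\mathcal{A}_X^{\partial U})$. Taking the open set in Definition~\ref{def:theaffsheaf} to be all of $X$, so that $\widetilde U=\widetilde X$, the pullback $g:=\pi^*f$ is a global meromorphic function on the compact normalization $\widetilde X$ with $\text{div}(g)+D_\text{m}^\mathbb{A}\ge 0$. I would then argue on each connected component $\widetilde X_j$ of $\widetilde X$, that is, on the normalization of each irreducible component $X_j$ of $X$: if $g|_{\widetilde X_j}\not\equiv 0$, then it is a nonzero meromorphic function on a compact Riemann surface, so $\deg\text{div}(g|_{\widetilde X_j})=0$, and together with $\text{div}(g|_{\widetilde X_j})\ge -D_\text{m}^\mathbb{A}|_{\widetilde X_j}$ this forces $\deg\bigl(D_\text{m}^\mathbb{A}|_{\widetilde X_j}\bigr)\ge 0$. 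Consequently, once we know that $\deg\bigl(D_\text{m}^\mathbb{A}|_{\widetilde X_j}\bigr)<0$ for every $j$, we obtain $g\equiv 0$, hence $f\equiv 0$, and we are done.

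What remains — and this is where the geometry of the singularities really enters, so I expect it to be the main obstacle — is the component-wise negativity of $\deg D_\text{m}^\mathbb{A}$. If $Y$ is irreducible, in particular in the situation of Theorem~\ref{intr:simaffin} where $Y=M$ is smooth and connected, $D_\text{m}^\mathbb{A}$ has no singular part, and each of its coefficients over $H_\infty$ equals $-\bigl((X\cdot H_\infty)_p+1\bigr)\le -2$, then $\widetilde X$ is connected and the desired inequality is literally the hypothesis. For reducible $Y$ one decomposes
\[
\deg\bigl(D_\text{m}^\mathbb{A}|_{\widetilde X_j}\bigr)=\sum_{p}\bigl(m(X_j,p)-r(X_j,p)\bigr)-\deg X_j-b_j ,
\]
the sum running over the singular points of $X_j$ in the affine part $Y$, with $\deg X_j\ge 1$ and $b_j\ge 1$ the number of points of $\widetilde X_j$ lying over $H_\infty$; one then has to estimate the finite sum — using $m(X_j,p)-r(X_j,p)\le\delta(X_j,p)$ and classical bounds on the singularities of a projective curve of given degree — and combine this with the global hypothesis $\sum_j\deg\bigl(D_\text{m}^\mathbb{A}|_{\widetilde X_j}\bigr)<0$ in order to rule out the existence of a component of non-negative $D_\text{m}^\mathbb{A}$-degree. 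In the irreducible case this last paragraph collapses to a single line, and it is exactly the negativity of $D_\text{m}^\mathbb{A}$ (as opposed to the effectivity of $D_\text{m}$ in the projective case) that is being exploited.
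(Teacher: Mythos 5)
Your skeleton is the paper's: invoke Theorem \ref{thm:affmain} to dispose of the case where $Y\setminus U$ is not locally polar, and in the locally polar case kill $H^0(X,\mathcal{A}_X^{\partial U})$ using that the hypothesis is exactly $\deg D_{\mathrm m}^{\mathbb A}<0$ (Definition \ref{def:affmultdiv}). The paper's proof is in fact a one-liner at this point: it uses the sheaf inclusion $\mathcal{A}_X^{\partial U}\subseteq\pi_*\mathcal{O}_{\widetilde X}\bigl(D_{\mathrm m}^{\mathbb A}\big|_{\pi^{-1}((\mathrm{Sing}X\cap\partial U)\cup H_\infty)}\bigr)$, observes that the restricted divisor has degree $\le\deg D_{\mathrm m}^{\mathbb A}<0$, and concludes that the space of global sections vanishes; it never decomposes into components. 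When $Y$ is irreducible (so $\widetilde X$ is connected) your argument is complete and coincides with this.

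The gap is the reducible case, which you flag honestly but propose to close in a way that cannot succeed: the component-wise negativity $\deg\bigl(D_{\mathrm m}^{\mathbb A}\big|_{\widetilde X_j}\bigr)<0$ for every $j$ does \emph{not} follow from the global inequality, by any estimate on singularities. Concretely, let $X$ be the union of the degree-$36$ plane curve with $375$ cusps from the paper's final example and a generic smooth plane curve of degree $400$, with $H_\infty$ generic and $Y=X\setminus H_\infty$. The cuspidal component has $\deg\bigl(D_{\mathrm m}^{\mathbb A}\big|_{\widetilde X_j}\bigr)=375-2\cdot 36=303>0$ (and $h^0\ge 84$ by the paper's computation), while the hypothesis holds since the left-hand side is $375$ and the right-hand side is $2\cdot(36+400)=872$; a section supported on that component and extended by zero then lies in $H^0(X,\mathcal{A}_X^{\partial U})$ for $U=Y\setminus\mathrm{Sing}\,Y$, whose complement in $Y$ is finite, hence locally polar. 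So the "classical bounds" step you hope for is unavailable, and your proof of the statement as written remains incomplete; note that the paper's own concluding step ($\deg<0\Rightarrow H^0=0$) tacitly uses connectedness of $\widetilde X$, i.e.\ irreducibility of $Y$, so your component-wise analysis actually isolates a genuine subtlety — the clean ways out are to assume $Y$ irreducible (or the divisor negative on each component, as happens automatically in the smooth situation of Corollary \ref{cor:simaffin}), not to try to deduce component-wise negativity from the displayed global inequality.
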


\begin{proof}
    Let $\pi: \widetilde{X} \to X$ denote the normalization. If $U \subseteq Y$ has locally polar complement, then $$A^2(U,\sigma) = H^0(X,\mathcal{A}_X^{\partial U}) \subseteq H^0\big(\widetilde{X},\mathcal{O}_{\widetilde{X}}(D_\mathbf{m}^{\mathbb{A}}\big|_{\pi^{-1}\big((\text{Sing}X \,\cap\, \partial U )\,\cup\, H_\infty \big)})\big) = \{0\}$$
    where the first equality is Theorem \ref{thm:affmain}, the inclusion follows from the inclusion of sheaves $\mathcal{A}_X^{\partial U} \subseteq \pi_*\mathcal{O}_{\widetilde{X}}(D_\mathbf{m}^{\mathbb{A}}\big|_{\pi^{-1}\big((\text{Sing}X \,\cap\, \partial U )\,\cup\, H_\infty \big)})$ and the last equality follows from the inequality $$\deg(D_\mathbf{m}^{\mathbb{A}}\big|_{\pi^{-1}\big((\text{Sing}X \,\cap\, \partial U )\,\cup\, H_\infty \big)}) \leq \deg(D_\mathbf{m}^\mathbb{A})$$
    and the assumption of the theorem which means precisely that $\deg(D_\mathbf{m}^\mathbb{A}) < 0$.
\end{proof}

\begin{corollary}\label{cor:simaffin}
    Let $M \subset \bC^n$ be a smooth algebraic curve and $U \subseteq M$ an open subset. Then $A^2(U,\sigma)$ is either infinite-dimensional or trivial.
\end{corollary}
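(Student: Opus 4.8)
The plan is to deduce Corollary~\ref{cor:simaffin} directly from Corollary~\ref{cor:szingWD}, whose hypothesis simplifies dramatically once one knows that the affine curve in question is smooth. Indeed, the whole substance of the statement is already contained in Theorem~\ref{thm:affmain} (and, underneath it, in Theorem~\ref{thm:gyokos}), so the task here is essentially bookkeeping: verifying a degree inequality for the affine multiplicity-divisor.

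Concretely, I would proceed as follows. First, take $Y = M$ and let $X := \overline{M} \subset \mathbb{P}^n$ be its projective closure, with normalization $\pi : \widetilde{X} \to X$. Since $M$ is smooth and open in $X$, every point of $M$ is a smooth point of $X$, so $\text{Sing}X \subseteq X \cap H_\infty$ and in particular $\text{Sing}Y = \emptyset$. Consequently the left-hand side of the inequality in Corollary~\ref{cor:szingWD}, namely $\sum_{p \in Y}\big(m(X,p) - r(X,p)\big)$, is an empty sum and equals $0$; equivalently, in the notation of Definition~\ref{def:affmultdiv}, the affine multiplicity-divisor $D_\mathbf{m}^\mathbb{A}$ has no positive part. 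Second, I would observe that the right-hand side $\sum_{p \in X \cap H_\infty}\big((X\cdot H_\infty)_p + r(X,p)\big)$ is strictly positive: the curve $X$ is a positive-dimensional projective variety, hence cannot be disjoint from the hyperplane $H_\infty$, so the sum is nonempty, and each summand is at least $1 + 1 = 2$. Therefore $\deg D_\mathbf{m}^\mathbb{A} < 0$, the hypothesis of Corollary~\ref{cor:szingWD} is met, and that corollary yields at once that $A^2(U,\sigma)$ is either infinite-dimensional or trivial for every open $U \subseteq M$.

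I do not expect any genuine obstacle at this stage. The only point needing even a word of justification is that $X \cap H_\infty \neq \emptyset$, which is standard projective geometry. Smoothness of $M$ plays exactly one role: it removes the single potentially nonnegative contribution to $\deg D_\mathbf{m}^\mathbb{A}$ (the affine singular points), leaving only the strictly negative contribution forced by the hyperplane at infinity — and it is precisely this negativity that, via Corollary~\ref{cor:szingWD}, collapses the Bergman space to $\{0\}$ whenever the complement is locally polar.
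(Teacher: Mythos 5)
Your proof is correct and takes essentially the same route the paper intends: Corollary~\ref{cor:simaffin} is stated without a separate proof precisely because it is the immediate specialization of Corollary~\ref{cor:szingWD}, where smoothness of $M$ makes the sum over $p\in Y$ vanish and the nonempty intersection $X\cap H_\infty$ makes the right-hand sum strictly positive, so $\deg(D_\mathbf{m}^{\mathbb{A}})<0$ and the dichotomy follows. Your two supporting observations (every point of $M$ is a smooth point of $X$, and a projective curve must meet the hyperplane at infinity) are exactly the bookkeeping the paper leaves implicit.
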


\noindent Corollary \ref{cor:simaffin} is a direct generalization of \cite{Wiegerinck} where this statement was proven in the special case $M=\bC$. Since our proof relies heavily on the fact that $M \subset \bC^n$ is \emph{algebraic}, we propose the following

\begin{conj}\label{conj:analytic}
    Let $M \subset \bC^n$ be a smooth analytic curve and $U \subseteq M$ an open subset. Then $A^2(U,\sigma)$ is either infinite-dimensional or $A^2(U,\sigma) = \{0\}$.
\end{conj}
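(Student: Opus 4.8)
The plan is to run the same three-step scheme that underlies the compact and algebraic cases, and to isolate precisely the step that resists the analytic setting. Write $\sigma$ for the induced Euclidean volume form on $M$. Throughout I take $M \subset \bC^n$ to be a \emph{closed} smooth analytic curve (a one-dimensional submanifold which is an analytic subvariety, so that the inclusion is proper); this is the natural reading of the hypothesis, and I will flag below where it is used. The three steps are: (1) if $\dim_\bC A^2(U,\sigma) < \infty$ then $M \setminus U$ is locally polar; (2) across a locally polar set every $L^2$ holomorphic function extends, so that $A^2(U,\sigma) = A^2(M,\sigma)$; and (3) $A^2(M,\sigma) = \{0\}$. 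Granting all three, finite-dimensionality forces triviality, which is exactly the asserted dichotomy.

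Steps (2) and (3) I would carry out unconditionally. Step (2) is local and identical to the argument in Theorem \ref{thm:gyokos}: near a point of $\partial U$ one invokes removability of $L^2$ holomorphic functions across polar subsets (Proposition 2.1 of \cite{Sz22}), and since a locally polar set has zero Lebesgue measure the extension stays square-integrable. For step (3) I would argue purely Riemannian-geometrically, replacing the divisor-at-infinity computation of the algebraic case. Since $M$ is a closed subvariety the induced metric is complete, and by the monotonicity formula for complex (hence area-minimizing) subvarieties one has $\mathrm{Area}\big(M \cap B(p,R)\big) \geq \pi R^2$, so $(M,\sigma)$ has infinite volume. Given $f \in A^2(M,\sigma)$, the function $u = |f|$ is nonnegative, subharmonic for the induced conformal structure, and lies in $L^2(\sigma)$. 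Testing the distributional inequality $\Delta u \geq 0$ against $\phi^2 u$ for radial cutoffs $\phi$ with $|\nabla\phi|\leq C/R$ gives, after integration by parts and Cauchy--Schwarz, the estimate $\int_M \phi^2|\nabla u|^2 \leq \tfrac{4C^2}{R^2}\int_{B(p,2R)} u^2 \to 0$, whence $\nabla u \equiv 0$; infinite volume then forces the constant $u$ to vanish. Thus $A^2(M,\sigma)=\{0\}$ for every closed smooth analytic curve, with no use of algebraicity.

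The main obstacle is step (1), the ``bigness'' direction, asserting that a complement which is not locally polar produces an infinite-dimensional Bergman space. On a compact surface this is part of Theorem \ref{thm:gyokos} (ultimately \cite{GGV22}), and the algebraic proof accesses it by pulling back to the compact normalization $\widetilde{X}$; the analytic curve $M$ admits no such compactification. My approach would be to localize and then globalize. If $M\setminus U$ is not locally polar, choose a coordinate disk $\Delta\subset M$ about a boundary point in which $\Delta\setminus U$ is non-polar, and use the classical planar theory of Carleson \cite{Ca83} to produce infinitely many linearly independent holomorphic functions on $\Delta\cap U$ that are $L^2$ near the point but do not extend across $\Delta\setminus U$. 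Since the open Riemann surface $U$ is Stein, one would cut these off and correct by solving $\bar\partial\eta_n = \bar\partial(\chi f_n)$ with Hörmander $L^2$-estimates for a suitable plurisubharmonic weight, arranging that $\chi f_n - \eta_n \in A^2(U,\sigma)$ remain independent and non-extendable.

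The crux, and the reason algebraicity was needed, is that these $L^2$-estimates must be taken with respect to $\sigma$, whose density is completely uncontrolled at the ends of $M$ for a non-algebraic embedding. Choosing a strictly plurisubharmonic weight $e^{-\varphi}$ comparable to $\sigma$ with a usable lower curvature bound---while guaranteeing that the corrected sections remain square-integrable for $\sigma$ and collapse neither to zero nor to extendable functions---is exactly what the algebraic compactification, with its strictly negative multiplicity-divisor $D_\mathbf{m}^\mathbb{A}$ at infinity, supplied for free. Absent that structure, making the global $\bar\partial$-correction quantitative enough to conclude infinite-dimensionality is the essential difficulty. (If one drops properness and allows non-closed analytic images, completeness and hence step (3) can also fail, compounding the problem.) I expect step (1), rather than (2) or (3), to be where the conjecture is genuinely open.
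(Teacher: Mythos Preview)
The statement you are addressing is a \emph{conjecture}, not a theorem: the paper explicitly proposes Conjecture~\ref{conj:analytic} as open and offers no proof. There is therefore nothing in the paper to compare your argument against. Your write-up is not a proof either, and you are candid about this---you isolate step~(1) as the genuine obstruction and explain why the algebraic machinery (the compact normalization $\widetilde{X}$ and the negative divisor $D_\mathbf{m}^\mathbb{A}$ at infinity feeding into Theorem~\ref{thm:gyokos}) is unavailable. That diagnosis matches the paper's own remark that the proof of Corollary~\ref{cor:simaffin} ``relies heavily on the fact that $M \subset \bC^n$ is \emph{algebraic}.''

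Your treatment of steps~(2) and~(3) is essentially sound and is a genuine contribution beyond what the paper records. Step~(2) is indeed the same local removability argument as in Theorem~\ref{thm:gyokos}. For step~(3), the Yau-type cutoff argument is correct in spirit; one small repair is to run it with the smooth function $|f|^2$ (whose Laplacian is $\asymp |\nabla f|^2$) rather than $|f|$, which is only subharmonic in the distributional sense and singular on $\{f=0\}$. With that adjustment, completeness of the induced metric (from properness of $M\subset\bC^n$) and infinite volume (from the monotonicity formula, using extrinsic cutoffs as you do) force $f\equiv 0$, and you have indeed shown $A^2(M,\sigma)=\{0\}$ without algebraicity. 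Step~(1) remains the open part of the conjecture; your proposed $\bar\partial$-correction scheme is a plausible line of attack, and your identification of the uncontrolled growth of $\sigma$ at the ends as the obstruction to the H\"ormander estimates is exactly right.
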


\noindent The condition on the singularities of $Y$ imposed in Theorem \ref{cor:szingWD} cannot be dispensed with as the following example shows. It follows that the Wiegerinck conjecture cannot be generalized to all Stein varieties.

\begin{example}
    Let $Y$ be an affine algebraic curve with projective closure $X$. If $U:= Y \setminus \text{Sing}Y$, then we have an equality of sheaves $\mathcal{A}_X^{\partial U} = \mathcal{A}_X^{\text{Sing}Y}=\pi_*\mathcal{O}_{\widetilde{X}}(D_\mathbf{m}^\mathbb{A})$, whence Theorem \ref{thm:affmain} implies that $A^2(U,\sigma) = H^0(\widetilde{X},\mathcal{O}_{\widetilde{X}}(D_\mathbf{m}^\mathbb{A}))$.

    In \cite{CPS14}, Calabri, Paccagnan and Stagnaro construct a projective curve $X \subset \mathbb{P}^2$ of degree $36$, having $375$ singular points of type $\big(\{x^2-y^3=0\},o\big) \subset (\bC^2,o)$, and no other singularities. Set $Y := X \setminus H$ for some generic line $H \leq \mathbb{P}^2$.

    As $H$ was chosen generically, it intersects $X$ transversally in $36$ distinct smooth points. Hence $$\deg(D_\mathbf{m}^\mathbb{A}) = \sum_{p \, \in \, \text{Sing}Y}( m(Y,p)-1)- \sum_{p\,\in\,X\cap H}\big( (X\cdot H)_p+1\big) = 375\cdot(2-1) -36\cdot(1+1)=303.$$
    The genus of the normalization is $g(\widetilde{X})=\frac{1}{2}\cdot 35 \cdot 34 -375 = 220$ by the Plücker formula. Finally, applying Riemann-Roch, we obtain
    $$h^0(\widetilde{X},\mathcal{O}_{\widetilde{X}}(D_\mathbf{m}^\mathbb{A})) \geq 1-g(\widetilde{X}) + \deg(D_\mathbf{m}^\mathbb{A}) = 1-220+303 = 84.$$
    It follows that $A^2(U,\sigma)$ is neither infinite-dimensional nor trivial.
    
\end{example}

\end{document}